\definecolor{ao}{rgb}{0.0, 0.5, 0.0}
\theoremstyle{plain}
\newtheorem{theorem}{Theorem}[section]
\newtheorem{corollary}[theorem]{Corollary}
\newtheorem{proposition}[theorem]{Proposition}
\theoremstyle{definition}
\newtheorem{definition}[theorem]{Definition}
\newtheorem{example}[theorem]{Example}
\newtheorem{remark}[theorem]{Remark}
\theoremstyle{remark}
\renewenvironment{proof}[1][\proofname]{\par
  \pushQED{\qed}%
  \normalfont \topsep6\p@\@plus6\p@\relax
  \trivlist
  \item[\hskip\labelsep
    \textbf{#1}\@addpunct{:}]\ignorespaces
}{%
  \popQED\endtrivlist\@endpefalse
}
\newcommand\function[5]{%
  \begingroup
  \setlength\arraycolsep{0pt}
  #1\colon\begin{array}[t]{c >{{}}c<{{}} c}
             #2 & \to & #3 \\ #4 & \mapsto & #5 
          \end{array}%
  \endgroup}
\renewcommand{\emptyset}{\varnothing}
\newcommand{\p}{\mathfrak{p}}
\newcommand{\cl}{{\bf\mathrm{cl}}}
\renewcommand{\a}{\mathfrak{a}}
\newcommand{\m}{\mathfrak{m}}
\newcommand{\spec}{\,\mathrm{Spec}}
\newcommand{\spic}{\mathbf{Pic}}
\newcommand{\sC}{\mathbf{Cart}}
\newcommand{\Z}{\mathbb{Z}}
\newcommand{\ev}{_{\overline{0}}}
\newcommand{\od}{_{\overline{1}}}
\renewcommand{\ker}{\mathrm{Ker}}
\newcommand{\id}{\mathrm{id}}
\renewcommand{\qed}{\hfill$\square$}
\renewcommand{\to}
{\longrightarrow}
\newcommand{\X}{\mathfrak{X}}
\renewcommand{\mapsto}{\longmapsto}
\renewcommand{\O}{\mathcal{O}}
\renewcommand{\hom}{\mathrm{Hom}}
\newcommand{\homm}{\underline{\mathrm{Hom}}}
\newcommand{\srings}{\mathsf{sRngs}}
\newcommand{\ann}{\mathrm{Ann}}
\newcommand{\J}{\mathfrak{J}}
\newcommand{\kdim}{\mathrm{Kdim}}
\newcommand{\ksdim}{\mathrm{Ksdim}}
\newcommand{\nil}{\mathrm{Nil}}
\newcommand{\sdim}{\mathrm{sdim}}
\newcommand{\ass}{\mathfrak{ass}}
\newcommand{\q}{\mathfrak{q}}
\renewcommand{\b}{\mathfrak{b}}
\newcommand{\K}{\Bbbk}
\subjclass[2020]{16W50 , 17A70, 17C70,  16W55}
\keywords{Dedekind superring,  invertible supermodule, fractional superideal, integrally closed superring.}
\begin{document}

\title{Dedekind superrings  and related concepts}  

\author{Pedro  Rizzo}

\address{Instituto de Matemáticas, FCEyN, Universidad de Antioquia, 50010  Medellín, Colombia}

\email{pedro.hernandez@udea.edu.co}

\author{Joel Torres del Valle} 

\address{Instituto de Matemáticas, FCEyN, Universidad de Antioquia, 50010  Medellín, Colombia}

\email{joel.torres@udea.edu.co}

\author{Alexander Torres-Gomez}

\address{Instituto de Matemáticas, FCEyN, Universidad de Antioquia, 50010  Medellín, Colombia}
 
\email{galexander.torres@udea.edu.co}


\maketitle

\begin{abstract} This article investigates the properties of Dedekind superrings, invertible supermodules and projective supermodules within the $\mathbb{Z}_2$-graded framework. Rather than treating these entities as specialized instances of general noncommutative ring theory, we develop them intrinsically within the category of supercommutative superrings. We examine the structural parallels to the classical commutative framework and, more importantly, characterize the fundamental discrepancies that emerge in the $\mathbb Z_2$-graded setting. In particular, we show that many hallmark equivalences of classical Dedekind domains--including those involving integral closedness and the coincidence of principal and unique factorization domains--fail to persist in the presence of an odd part.
\end{abstract}


\section{Introduction} 

The theory of superrings and superalgebras has garnered significant research attention from both the physics and mathematics communities. By extending the classical commutative framework through a $\mathbb{Z}_2$-grading, these structures incorporate both commuting and anticommuting elements, effectively broadening the scope of algebraic study. Formally, a superalgebra is defined as an algebra over a field (or a commutative ring) equipped with a $\mathbb{Z}_2$-grading, while a superring is a $\mathbb{Z}_2$-graded ring whose homogeneous elements satisfy a supercommutativity relation.

The investigation of superrings integrates techniques from abstract algebra, category theory, and representation theory. In this graded setting, the structural properties of ideals and supermodules acquire new dimensions, offering deeper insights into the behavior of the odd (anticommuting) sector. While noncommutative rings generally present significant structural challenges, superrings offer a more tractable middle ground; the ``well-behaved" nature of their homogeneous elements allows for the systematic extension of commutative results to a noncommutative environment. Consequently, superring theory serves as a rigorous bridge for developing analogous notions in general noncommutative ring theory.

Motivated by the requirement to formally define dimension in supergeometry, Masuoka and Zubkov \cite{MASUOKA2020106245} extended the classical Krull dimension to the $\mathbb{Z}_2$-graded setting by introducing the Krull superdimension. This advancement was pivotal for defining regularity and smoothness for superschemes. Given the fundamental role of Dedekind domains in algebraic number theory and the theory of non-singular curves, it is natural to ask whether a robust theory of Dedekind superrings can be formulated.

The pursuit of noncommutative analogues of Dedekind domains has a rich history, particularly since the 1980s. Pioneering works \cite{robson, robsonet} (see also \cite[Chapter 5]{robsonbook}) explored these structures via fractional ideals and generalized notions of invertibility, while others have addressed classification problems in the general \cite{good} and graded cases \cite{vander}. To establish a rigorous framework for abstract, non-singular algebraic supercurves, the characterization of Dedekind superrings is essential. Following \cite{MASUOKA2020106245}, we define a Dedekind superring as a regular Noetherian superring of Krull superdimension $1\mid n$ (where $n \in \mathbb{N}$) and investigate the extent to which classical equivalences for Dedekind domains persist.

A central objective of this work is the development of the algebraic theory of invertible supermodules and fractional superideals. Remarkably, our analysis reveals that most standard characterizations of Dedekind domains fail in the supercontext. Specifically, the classical equivalences involving the invertibility of fractional ideals, unique factorization of ideals, and integral closedness do not hold; only the characterization via discrete valuation superrings remains valid under a suitable modification.

The notion of a discrete valuation superring diverges significantly in this setting. A regular local superring of even Krull superdimension $1$ does not necessarily arise from a valuation on its superfield of fractions. Indeed, while valuation superrings are inherently integrally closed \cite[Proposition 3.12]{RTT3}, we prove that non-trivial Dedekind superrings are never integrally closed (Remark \ref{remark:5.12} (ii)). We further examine two hallmark properties of classical Dedekind domains:
\begin{enumerate}
\item The property of being a Dedekind domain is preserved by integral closure in finite separable field extensions.

\item  The coincidence of Principal Ideal Domains and Unique Factorization Domains.
\end{enumerate}

Furthermore, our results on superrings with unique factorization \cite{RTT5} reveal sharp discrepancies with the classical theory. Ultimately, while certain features of Dedekind domains admit meaningful extensions, the super-theory manifests genuinely new and subtle phenomena that distinguish it from both the commutative and general noncommutative frameworks.

The paper is organized in the following manner. Section \ref{Sec:Basic:Def} provides a comprehensive overview of the fundamental concepts of superrings and supermodules that will be utilized throughout the paper. This includes prime ideals associated with supermodules and the definition of Krull superdimension for superrings. While many of the results presented in this section posses proofs that are analogous to their counterparts in commutative algebra, we nonetheless include them due to their relatively scarcity within the existing literature on supercommutative algebra. Section \ref{Sec:Invertibibility} introduces the concepts of invertible supermodule and fractional superideal, investigates their fundamental properties, and constructs the Picard and Cartier groups of Noetherian superrings. Section \ref{Sec:Projective:Supermodules} focuses on the study of projective supermodules and their relationship to invertible supermodules. In view of the detailed treatment of this topic provided by Moyre et al. \cite{morye2022notes}, our presentation here is concise and solely focuses on the essential aspects relevant to the present work. Section \ref{Sec:Dedekind} defines and analyzes the properties of Dedekind superrings. Section \ref{comments} concludes by discussing potential avenues for future research and raises several open questions.



\section{Preliminaries}\label{Sec:Basic:Def}


In this section we introduce some basic notation and conventions, which will be used through the paper. Here, $\Z_2=\{\,\overline{0}, \overline{1}\,\}$ represents the group of integers modulo 2.  Any ring $R$ in this paper is assumed to be unitary and non-trivial (i.e., $1\neq 0$). 

\subsection{Superrings}

\begin{definition}
    A $\Z_2$-graded unitary ring $R=R\ev\oplus R\od$ is called a \emph{superring}. 
\end{definition} 

Let $R$ be a superring. We define the \textit{parity} of $x\in R$ as the integer 

\[
|x|:=\begin{cases}
    0& \text{ if }x\in R\ev,\\
    1& \text{ if }x\in R\od.
\end{cases}
\]

The set $h(R):=R\ev\cup R\od$ is called the \textit{homogeneous elements of $R$} and each $x\in h(R)$ is said to be  \textit{homogeneous}.  A homogeneous nonzero element $x$ is called \emph{even} if $x\in R\ev$ and \textit{odd} if $x\in R\od$.
\medskip

Any superring $R$ in this work will be \textit{supercommutative}, i.e., $R\ev$ is contained in the center of $R$ and every element of $R\od$ squares to zero. If 2 is a non-zerodivisor in $R$, this is equivalent to 

\begin{equation}\label{Supercommutativity}
    xy=(-1)^{|x||y|}yx,\quad \text{ for all }\quad x, y\in h(R).
\end{equation} 

\begin{definition}    
Let $R$ and $R'$ be superrings. A \textit{morphism} $\phi:R\to R'$ is a ring homomorphism \emph{preserving the parity}, that is, such that $\phi(R_i)\subseteq R'_i$ for all $i\in\mathbb{Z}_2$.

A morphism $\phi:R\to R'$ is an \textit{isomorphism} if it is bijective. In this case, we say that $R$ and $R'$ are \emph{isomorphic superrings}, which we denote by $R\cong R'$. 
\end{definition}
 
\begin{definition}
Let $R$ be a superring. 
\begin{enumerate}
\item[i)] A \textit{superideal} of $R$ is a $\Z_2$-graded ideal $\a$ of $R$, that is, it admits a decomposition $\a=(\a\cap R\ev)\oplus(\a\cap R\od)$.
\item[ii)] An ideal $\p$ of $R$ is \textit{prime} (resp., \textit{maximal}) if $R/\p$ is an integral domain (resp., a field).
\end{enumerate}
\end{definition}

\begin{remark}\label{rmk:prime-ideal} Let $R$ be a superring. 

\begin{enumerate}
\item[i)] If $\a$ is a superideal of $R$, we denote $\a_i:=\a\cap R_i$, for each $i\in\Z_2$.
\item[ii)]  Any superideal $\a$ of $R$ is a two-sided ideal.
\item[iii)] If $\a$ is a superideal of $R$, it is easy to show that the quotient ring $R/\a=(R/\a)\ev\oplus(R/\a)\od$ becomes a superring with $\Z_2$-grading given by $(R\ev/\a\ev)\oplus(R\od/\a\od)$.
\item[iv)] The definition of prime ideal is equivalent to requiring that if $x, y$ are elements (resp., homogeneous elements) in $R$ whose product is in $\p$, one of them is  in $\p$ (see \cite[Lemma 4.1.2]{westrathesis}). In other words, any prime ideal is completely prime.  
\item[v)] Any prime ideal of $R$ is a superideal of the form $\p=\p\ev\oplus R\od$, where $\p\ev$ is prime in $R\ev$. Similarly, any maximal ideal of $R$ is a superideal of the form $\m=\m\ev\oplus R\od$, where $\m\ev$ is a maximal ideal of $R\ev$ (see e.g., \cite[Lemma 4.1.9]{westrathesis}).
\end{enumerate}
\end{remark}

\begin{definition} Let $R$ be a superring. 
\begin{itemize}
    \item[i)] The ideal $\J_R=R\cdot R\od=R\od^2\oplus R\od$ is called the \textit{canonical superideal of $R$}.
    \item[ii)] The \textit{superreduced} of $R$ is the commutative ring $\overline{R}=R/\J_R\cong R\ev/R\od^2$. 
    \item[iii)] $R$ is called a \textit{superdomain} if $\overline{R}$ is a domain or, equivalently, $\J_R$ is a prime ideal. 
    
    \item[iv)] $R$ is called a \textit{superfield} if $\overline{R}$ is a field or, equivalently, $\J_R$ is a maximal ideal.
\end{itemize}
\end{definition}

\begin{definition} A superring $R$ is called \emph{Noetherian} if $R$ satisfies one (and hence all) of the following equivalent conditions (for a proof see \cite[Proposition 3.3.1]{westrathesis}):

    \begin{itemize}
        \item[i)] $R$ has the ascending chain condition on superideals.
        \item[ii)] Any superideal $\a$ of $R$ is \emph{finitely generated}, that is, there exist finitely many homogeneous elements $a_1,\ldots, a_n$ in $\a$  such that $\a=Ra_1+\cdots+Ra_n$.
        \item[iii)] Any nonempty collection of superideals of $R$ has a maximal element.
    \end{itemize}
\end{definition}

In analogy with the commutative  construction, the localization of a  superring is as follows. Let $R$ be a superring and $S\subseteq R\ev$ a multiplicative set. \emph{The localization $S^{-1}R$ of $R$ at $S$}, is defined as the superring 

\[
S^{-1}R:=(S^{-1}R\ev)\oplus(S^{-1}R\od),
\]

\noindent where $R\od$ is regarded as $R\ev$\,-module. As usual, an element in $S^{-1}R$ is denoted by $x/y$ or $y^{-1}x$, where $x\in R$ and $y\in S$. If $S:=R\ev-\p\ev$, where $\p$ is a prime ideal, $S^{-1}R$ is often written as $R_\p$. Further, every time  we write $S=R-\p$, where $\p$ is a prime ideal, it must be understood that $S=R\ev-\p\ev$. For example, $R-\J_R$ stands for $R\ev-R\od^2$, etc. If $S$ is the set of non-zerodivisors of $R$, $S^{-1}R$ is usually denoted by  $K(R)$ and is called \emph{the total superring of fractions of $R$}. If $R$ is a superdomain, then the localization $R_{\J_R}$ is a superfield, denoted by $Q(R)$ and called the \textit{superfield of fractions} of $R$. Let $\lambda_R^S: R\to S^{-1}R,\displaystyle  \,x\mapsto\frac{x}{1}$ be the \textit{localization morphism}. It is easy to check that the pair $(S^{-1}R, \lambda_R^S)$ satisfies the universal property of localization.

\begin{definition}
A superring $R$ is said to be \emph{local} if it has a unique maximal ideal $\mathfrak{m}$. We denote a such local superring by the pair $(R,\mathfrak{m})$. 
\end{definition} 

The following proposition is proved in \cite[Proposition 5.1.9]{westrathesis}.

\begin{proposition} Let $R$ be a superring and $\p$ a prime ideal of $R$. Then, $(R_\p, R_\p\p)$ is local superring, with $R_\p\p=\p_\p$. \qed 
\end{proposition}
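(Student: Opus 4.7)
The plan is to reduce everything to the classical commutative result applied to the ring $R\ev$, by using the description of prime and maximal ideals of a superring given in Remark \ref{rmk:prime-ideal}(v). Recall from that remark that every maximal ideal of a superring is of the form $\m\ev\oplus R\od$ with $\m\ev$ maximal in the commutative ring $R\ev$, so maximal ideals of a superring are in bijection with maximal ideals of its even part.

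First I would write $\p=\p\ev\oplus R\od$ with $\p\ev$ prime in $R\ev$, and let $S=R\ev-\p\ev$. By the definition of the localization preceding Proposition \ref{P:2.2:A}, we have
\[
R_\p=(R\ev)_{\p\ev}\oplus S^{-1}R\od,
\]
so in particular $(R_\p)\ev=(R\ev)_{\p\ev}$. The classical commutative theory says that $(R\ev)_{\p\ev}$ is a local ring with unique maximal ideal $(\p\ev)(R\ev)_{\p\ev}$. Applying Remark \ref{rmk:prime-ideal}(v) to $R_\p$, the maximal ideals of $R_\p$ are in bijection with those of $(R_\p)\ev$; hence $R_\p$ has a unique maximal ideal, namely
\[
\mathfrak{n}:=(\p\ev)(R\ev)_{\p\ev}\oplus S^{-1}R\od,
\]
which establishes that $R_\p$ is local.

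Next I would check the two identifications of this maximal ideal. For $\p_\p=S^{-1}\p$, the fact that localization commutes with the decomposition into even/odd parts gives
\[
\p_\p=S^{-1}\p\ev\oplus S^{-1}R\od=(\p\ev)(R\ev)_{\p\ev}\oplus S^{-1}R\od=\mathfrak{n}.
\]
For $R_\p\p$, by definition this is the ideal of $R_\p$ generated by (the image of) $\p$; a typical element is a finite sum $\sum (r_i/s_i)(y_i/1)=\sum (r_iy_i)/s_i$ with $y_i\in\p$, which lies in $S^{-1}\p$; conversely every $y/s\in S^{-1}\p$ equals $(1/s)\cdot(y/1)\in R_\p\p$. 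Hence $R_\p\p=\p_\p=\mathfrak{n}$.

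There is no real obstacle here: the entire argument is a direct translation of the commutative statement, with the only subtlety being the observation that maximal ideals of a supercommutative ring are controlled by the maximal ideals of the even part, which has already been recorded. The two verifications above are straightforward manipulations of the explicit decomposition $R_\p=(R\ev)_{\p\ev}\oplus S^{-1}R\od$.
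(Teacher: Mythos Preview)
Your argument is correct and is the natural one: reduce to the classical localization result for the commutative ring $R\ev$ via the bijection between maximal ideals of a superring and maximal ideals of its even part (Remark \ref{rmk:prime-ideal}(v)), and then read off the identifications $R_\p\p=\p_\p$ from the explicit decomposition $R_\p=(R\ev)_{\p\ev}\oplus S^{-1}R\od$. The paper does not supply its own proof of this proposition; it simply records the statement and defers to \cite[Proposition 5.1.9]{westrathesis}, so there is nothing further to compare.
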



\subsection{Supermodules}  


From now on, a ring $R$ is always meant to be a superring.

\begin{definition}
     A left $\Z_2$-graded $R$-module $M=M\ev\oplus M\od$ is called an $R$-\textit{supermodule}.
\end{definition} 

Let $M$ be an $R$-supermodule. The  \textit{set of homogeneous elements of $M$} is $h(M):=M\ev\cup M\od$, and the  \textit{parity} of $m\in h(M)$ is defined in the obvious way. If $\lambda$ is a left action of $R$ on $M$, we define a right action $\rho$ of $R$ on $M$ by 

\[
\rho(m, a)=(-1)^{|a||m|}\lambda(a, m),\quad \text{ for all } \quad x\in h(R)\text{ and }m\in h(M).
\]

\noindent It is easy to verify that these  actions commute. That is, $\lambda(a, \rho(m, a'))=\rho(\lambda(a, m), a')$ for all $a, a'\in R$ and $m\in M$. Thus, $M$ is both a right and a left $R$-supermodule. Hereafter any  supermodule is assumed to have these two compatible structures. 

\begin{definition}
     Let $M$ and $N$ be $R$-supermodules. A  \textit{morphism} $\phi:M\to N$ is a homomorphism of $R$-modules which  \textit{preserves the parity}, i.e., $\phi(M_{i})\subseteq N_{i}$, for all $i\in\Z_2$. 
\end{definition}

The set of morphisms $M\to N$ is denoted by $\hom_R(M, N)$. Let $\homm_R(M,N)$ be the set of all homomorphisms (of $R$-modules) from $M$ to $N$. This set is an $R$-supermodule with components   

\[
\homm_R(M,N)_{i}:=\left\{\psi\in \homm_R(M,N)\mid\,\psi(M_{j})\subseteq N_{i+j},\,\forall j\in\Z_2\right\}\text{ for all }i\in\Z_2,
\]

\noindent where the left action of $R$ is given by $(a\psi)(m)=a\psi(m)$ 
for all $a\in h(R)$ and $m\in h(M)$. It is easy to see that $\homm_R(M,N)\ev=\text{Hom}_R(M,N)$ (see \cite[Lemma 2.9]{morye2022notes} and \cite[Chapter 6]{westrathesis}).

\begin{definition}
    An $R$-supermodule $M$ is \emph{finitely generated} if as $R$-module is generated by finitely many homogeneous elements.
\end{definition}

Let $\text{sMod}_R$ be the category of $R$-supermodules. We denote by $\text{smod}_R$ the full subcategory of $\text{sMod}_R$ whose objects are supermodules finitely generated over $R$.
\medskip

An important endofunctor $\Pi:\text{sMod}_R\longrightarrow \text{sMod}_R$ (resp. $\Pi:\text{smod}_R\longrightarrow \text{smod}_R$) is the \textit{parity swapping functor} defined on objects as $M\mapsto\Pi M:=(\Pi M)\ev\oplus(\Pi M)\od$, where $(\Pi M)\ev=M\od$ and $(\Pi M)\od=M\ev$; on morphisms, $(\psi:M\to N)\mapsto (\Pi \psi:\Pi M\to \Pi N)$ which is defined by $\psi$ itself (\cite[\S\,6.1.2]{westrathesis}).

\begin{definition}
    A morphism $\psi:M\to N$ is called an \textit{isomorphism} if it is bijective. 
    
    We say that $M$ and $N$ are \emph{isomorphic} if there exists an isomorphism between them, and it is denoted by  $M\simeq N$.
\end{definition}

If $M$ and $N$ are $R$-supermodules, it can be proved that $M\oplus N$ and $M\otimes_R N$ are $R$-supermodules. Likewise, if $M$ and $N$ are $R$-subsupermodules of $P$, then $M+N$ and $M\cap N$ are $R$-subsupermodules of $P$. Moreover, if $N\subseteq M$ is an $R$-subsupermodule of $M$, then $M/N$ is an $R$-supermodule as well. For example, we define the \textit{superreduced} of $M$ as the $R$-supermodule $\overline{M}=M/\J_R M$, where $\J_R M$ is an example of an  $R$-subsupermodule  of $M$ of the type $\a M$, that is the \emph{product} of $M$ and the superideal $\a$ of $R$.

It is possible to show that the category $\text{sMod}_R$ is a strict monoidal (abelian) category with ``tensor'' product the usual tensor product ``$\otimes_R$" of supermodules whose unit is given by $R$. It is worth mentioning that the functor $\homm_R(M,-)$ is the left-adjoint of the functor $-\otimes_R M$ (see \cite[Chapter  1]{etingof} and \cite[Chapter 6]{westrathesis}).

Let $M$ be an $R$-supermodule and $S\subseteq R\ev$ a multiplicative set. The \emph{localization of $M$ at $S$} is  defined to be the $S^{-1}R$-supermodule

\[
S^{-1}M=(S^{-1}M\ev)\oplus(S^{-1}M\od).
\]

Each element in $S^{-1}M$ is denoted by $m/s$ or $s^{-1}m$, where $m\in M$ and $s\in S$. In the case $S=R-\p$, where $\p$ is a prime ideal of $R$, $S^{-1}M$ is denoted by $M_\p$.


Let $\lambda_M^S:M\to S^{-1}M, m\mapsto\displaystyle\frac{m}{1}$, be the \textit{localization morphism}. It can be easily proved that the pair $(S^{-1}M, \lambda_M^S)$ satisfies the universal property of localization. 

\begin{remark}\label{rem:prodK}
Let $R$ be a  superring. Observe that if $M$ and $N$ are $R$-subsupermodules of $K(R)$, then it makes sense to define \emph{the product of $M$ and $N$} as the $R$-subsupermodule of $K(R)$ given by 

\[
M\cdot N:=\left\{\sum_i m_in_i\mid\,m_i\in M,\,n_i\in N\right\}.
\]
 
\end{remark}

\begin{remark}
The localization functor $S^{-1}(\cdot):\text{sMod}_R\longrightarrow \text{sMod}_{S^{-1}R}$, is not only an exact functor but also a strict monoidal functor (see \cite[\S 2.4]{etingof}). Furthermore, the ``Local-Global Principle'' holds true in this context (see \cite[Propositions 3.9, 3.10]{atiyah}).  
\end{remark}


\subsection{Supervector spaces and superalgebras} 


\begin{definition}
    Let $\Bbbk$ be a field. A \textit{supervector space over} $\K$ is a $\K$-supermodule. If $V=V\ev\oplus V\od$ is a supervector space over $\K$, we define the \emph{superdimension} $\mathrm{sdim}_\K(V)$ of $V$ as the pair   
    
    \[
    \mathrm{sdim}_\K(V) = r \mid s, \quad \text{ where } \quad r:= \dim_\K(V\ev)\text{  and }s:= \dim_\K(V\od).
    \]
    
\end{definition}

\begin{proposition}
   Let $(R, \m)$ be a local  Noetherian superring. Then, $\mathrm{sdim}_{R/\m}(\m/\m^2)$ is finite. 
\end{proposition}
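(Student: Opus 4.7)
The plan is to reduce to classical finite-dimensionality statements in commutative algebra by exploiting the decomposition $\m = \m\ev \oplus R\od$ guaranteed by Remark \ref{rmk:prime-ideal} v). That same remark identifies $R/\m$ with the field $R\ev/\m\ev$, so both $(\m/\m^2)\ev$ and $(\m/\m^2)\od$ are naturally vector spaces over $R\ev/\m\ev$, and it suffices to bound their dimensions separately.

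First I would expand $\m^2 = (\m\ev + R\od)(\m\ev + R\od)$ and, using that even elements commute with odd ones by supercommutativity, collect terms to obtain the homogeneous decomposition $(\m^2)\ev = \m\ev^2 + R\od^2$ and $(\m^2)\od = \m\ev R\od$. This yields the identifications $(\m/\m^2)\ev \simeq \m\ev/(\m\ev^2 + R\od^2)$ and $(\m/\m^2)\od \simeq R\od/\m\ev R\od$ as modules over $R\ev/\m\ev$.

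For the even part, Proposition \ref{P:2.5} tells us that $R\ev$ is a Noetherian commutative ring; being local with maximal ideal $\m\ev$, the classical Nakayama-based fact that $\m\ev/\m\ev^2$ is a finite-dimensional $R\ev/\m\ev$-vector space applies, and the further quotient by the image of $R\od^2$ remains finite-dimensional via the natural surjection $\m\ev/\m\ev^2 \twoheadrightarrow \m\ev/(\m\ev^2 + R\od^2)$. For the odd part, the same Proposition \ref{P:2.5} guarantees that $R\od$ is a finitely generated $R\ev$-module, so $R\od/\m\ev R\od$ is a finitely generated module over the field $R\ev/\m\ev$, hence finite-dimensional. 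The only place where any care is needed is identifying $(\m^2)\ev$ correctly, in particular not overlooking the contribution of $R\od^2 \subseteq R\ev$; once this bookkeeping is in place, the result follows immediately from Proposition \ref{P:2.5}.
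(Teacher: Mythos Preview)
Your proof is correct and follows essentially the same approach as the paper: both arguments use the decomposition $\m/\m^2 = (\m\ev/(\m\ev^2+R\od^2))\oplus(R\od/\m\ev R\od)$ and then invoke Proposition~\ref{P:2.5} together with Nakayama's Lemma to conclude finite-dimensionality of each summand over $R\ev/\m\ev$. Your write-up is in fact slightly more explicit about how each piece is handled, but the strategy is identical.
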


\begin{proof}
 Because $\m$ is a finitely generated $R\ev$-module, we can apply Nakayama's Lemma to conclude that $\m/\m^2$ is finitely generated as an $R/\m$-module.
\end{proof}

\begin{definition}
    Let $\K$ be a field and $R$ a superring. If $R$ is also a $\K$-supervector space with the same grading, we call $R$ a $\K$-\textit{superalgebra}.
\end{definition} 

\begin{example}\label{ex:ksuperalg}
Let $\K$ be a field and consider 

\[
R=\K[X_1, \ldots, X_s \mid  \theta_1, \ldots, \theta_d]:=\K\langle Z_1, \ldots, Z_s, Y_1, \ldots, Y_d\rangle/(Y_iY_j+Y_jY_i, Z_iZ_j-Z_jZ_i, Z_iY_j-Y_jZ_i).
\]

This $\K$-superalgebra is called the \textit{polynomial superalgebra with coefficients in} $\K$, with \textit{even indeterminants}\\ $X_1, \ldots, X_s$ and \textit{odd indeterminants} $\theta_1, \ldots, \theta_d$. Here, $\theta_i$ corresponds to the image of $Y_i$ and $X_j$ corresponds to the image of $Z_j$ in the quotient $\K$-algebra from the equality above, with $i=1, \ldots, d$ and $j=1, \ldots, s$.


The super-reduced of $R$ corresponds to $\overline{R}\cong\K[X_1, \ldots, X_s]$ and it is easy to check that any element of $R$ can be written in the form 


\[
f=f_{i_0}(X_1, \ldots, X_s)+\sum_{J\,:\,\text{even}}f_{i_1\cdots i_J}(X_1, \ldots, X_s)\theta_{i_1}\cdots\theta_{i_J}+\sum_{J\,:\,\text{odd}}f_{i_1\cdots i_J}(X_1, \ldots, X_s)\theta_{i_1}\cdots\theta_{i_J},
\]


\noindent where $f_{i_0}, f_{i_1\cdots i_J}\in\K[X_1, \ldots, X_s]$ for all $J$.  
\end{example}

A generalization of the previous example is given below. 

\begin{example}
     Let $R$ be a commutative ring and $M$ an $R$-module whose elements are labeled as odd. Consider the tensor superalgebra 

    \begin{align*}
    T_R(M)&:=\bigoplus_{n\geq0}M^{\otimes n}. 
    \end{align*}

    \noindent If $n\geq0$ consider the supersubmodule $N_n$  of $M^{\otimes n}$ generated by the elements 
    
    \[
    v_1\otimes v_2+(-1)^{|v_1||v_2|}v_2\otimes v_1,\quad\text{  where }\quad v_1, v_2\in h(M^{\otimes n}).
    \]
    
    Let 

    \[
    N:=\bigoplus_{n\geq0}N_n
    \quad\text{ and }\quad \bigwedge_R(M):=T_R(M)/N.
    \]

    \noindent If $M$ has a free basis $\theta_1, \ldots, \theta_d$, we define 

    \[
    R[\theta_1, \ldots, \theta_d]:=\bigwedge_R (M).
    \]

    The above construction corresponds to the \textit{polynomial $R$-superalgebra with odd indeterminants} $\theta_1, \ldots, \theta_d$. 
    
    Now, as a generalization of Example \ref{ex:ksuperalg}, given a $\K$-superalgebra $R$, we define  the so-called \textit{polynomial $R$-superalgebra in even indeterminants} $X_1, \ldots, X_s$ and \textit{odd indeterminants $\theta_1, \ldots, \theta_d$} as 


    \[
    R[X_1, \ldots, X_s\mid\theta_1, \ldots, \theta_d]:=R\otimes \K[X_1, \ldots, X_s][\theta_1, \ldots, \theta_d].
    \]
\end{example}

\begin{remark}\label{rem:splitring}
Polynomial superalgebras are the prototypical example of an important subclass of superrings, which are called \textit{split superrings}. More precisely, a superring $R$ is called {\it split} if the short exact sequence

\[
0\longrightarrow \J_R\longrightarrow R\longrightarrow \overline{R}\longrightarrow0
\]

\noindent is a split exact sequence or, equivalently, $R\cong\overline{R}\oplus\J_R$ (see \cite[p.7]{bruzzo2023notes} and  \cite[\S 3.5]{westrathesis}).  
\end{remark}


\subsection{Associated primes} 


\begin{definition}
Let $R$ be a superring and $M$ an $R$-supermodule. A prime ideal $\p$ of $R$ is \emph{associated to  $M$} if $\p$ is the annihilator of some nonzero homogeneous element $m$ of $M$, i.e.,  $\p=\ann_R(m):=\{x\in R\mid xm=0\}$.

Observe that $M$ admits the structure of both an $R$-supermodule and an $R\ev$-module. The associated primes of $M$ with respect to these structures will be denoted by $\ass_R(M)$ and $\ass_{R\ev}(M)$, respectively.
\end{definition}

\begin{remark}
    Let $R$ be a superring. Because any prime ideal $\p$ of $R$ is of the form $\p\ev\oplus R\od$, then there is bijection  between $\spec(R)$ and $\spec(R\ev)$. However, the property of being an associated prime is not preserved by this bijection. For example, let $R=A[\theta_1, \theta_2, \theta_3]/(\theta_1\theta_2, \theta_2\theta_3),$ where $A$ is a domain and $\theta_1, \theta_2$ and $\theta_3$ are odd variables. If  $\overline{\theta_i}$ denoted the class of $\theta_i$ in $R$, then  $R\ev=A[\,\overline{\theta}_1 \overline{\theta}_3]$ and $\ann_{R\ev}(\overline{\theta}_1)=(\overline{\theta}_1\overline{\theta}_3)_{R\ev}$ is an associated prime whereas $\ann_R(\overline{\theta}_1)=(\overline{\theta}_1, \overline{\theta}_2)_R$ is not prime in $R$.
\end{remark}

\begin{definition}
Let $R$ be a superring and $M$ an $R$-supermodule. An element $a\in R-\{0\}$ is called a \textit{zerodivisor on} $M$ if there exists some nonzero homogeneous element $m\in M$ such that $am=0$.
\medskip

We denote by $D(M)$ the set consisting of $0$ and the set of zerodivisors on $M$.
\end{definition}

As in the commutative case, if $R$ is a Noetherian superring, we have a natural relation between the set of zerodivisors on $M$ and associated primes:

\begin{proposition}\label{Cor:2.8} Let $R$ be a Noetherian superring and let $M$ be a nonzero $R$-supermodule. Then  

\[
\displaystyle D(M)=\displaystyle\bigcup_{\p\in\ass(M)}\p.
\]
\qed 
\end{proposition}

\begin{proposition}\label{Proposition:0.1}
    $\p\in\ass(M)$ if and only if there is an injective morphism $R/\p\hookrightarrow M$ or $R/\p\hookrightarrow \Pi M$.
\end{proposition}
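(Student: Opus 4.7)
The plan is to unwind both directions through the evaluation map $r \longmapsto rm$, taking care to track parities properly.

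For the forward direction, suppose $\p \in \ass(M)$, so that $\p = \ann(m)$ for some nonzero homogeneous $m \in M$. Since $m$ is homogeneous, decomposing $r = r\ev + r\od \in R$ gives $rm = r\ev m + r\od m$ as a sum of homogeneous elements of opposite parities; hence $rm = 0$ forces both summands to vanish, which shows that $\ann(m)$ is a superideal (consistent with Remark \ref{rmk:prime-ideal}(v), since $\p$ is prime). I would then define the $R$-linear map $\phi: R \to M$, $r \mapsto rm$. I split into two cases according to $|m|$. If $m \in M\ev$, then $\phi(R_i) \subseteq M_i$ for $i \in \Z_2$, so $\phi$ is a morphism of $R$-supermodules with $\ker \phi = \ann(m) = \p$; the induced map $R/\p \hookrightarrow M$ is the required injection. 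If instead $m \in M\od$, then $\phi(R\ev) \subseteq M\od = (\Pi M)\ev$ and $\phi(R\od) \subseteq M\ev = (\Pi M)\od$, so viewing $\phi$ with codomain $\Pi M$ makes it parity-preserving, and the same kernel argument yields $R/\p \hookrightarrow \Pi M$.

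For the converse, suppose I have an injective morphism $\psi : R/\p \hookrightarrow M$. Let $m := \psi(\overline{1})$, which lies in $M\ev$ because $\overline{1} \in (R/\p)\ev$ and $\psi$ preserves parity; moreover $m \neq 0$ by injectivity. For any $r \in R$, $rm = \psi(\overline{r})$, so $rm = 0$ iff $\overline{r} = 0$ in $R/\p$ iff $r \in \p$. Hence $\ann(m) = \p$, showing $\p \in \ass(M)$. The case of an injection $\psi : R/\p \hookrightarrow \Pi M$ is identical, except now $m := \psi(\overline{1}) \in (\Pi M)\ev = M\od$ is a nonzero odd homogeneous element of $M$ with $\ann(m) = \p$.

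There is no serious obstacle; the only subtlety is that an associated prime can be realized by an even witness \emph{or} an odd witness, and in the latter case the evaluation map $R \to M$ flips parities, which is exactly what the functor $\Pi$ is designed to absorb. The equivalence therefore encodes the bookkeeping between ``homogeneous annihilator'' in the definition of $\ass(M)$ and ``parity-preserving injection'' required of a morphism in $\mathrm{sMod}_R$.
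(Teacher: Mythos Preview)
Your proof is correct and follows essentially the same route as the paper's own argument: define the evaluation map $r\mapsto rm$, observe its kernel is $\p$, and split into cases according to the parity of $m$ to land in $M$ or $\Pi M$; for the converse, set $m=\psi(\overline{1})$ and read off $\ann(m)=\p$. You supply a bit more detail on why the map is parity-preserving in each case, but the strategy is identical.
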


\begin{proof} Suppose that $\p\in\ass(M)$ and let $m\in h(M)$ be nonzero such that $\p=\ann_R(m)$. Consider the homomorphism $\phi:R\to M$, $r\mapsto rm$. Note that $\p=\ker(\phi)$. Thus, we find a injective homomorphism $R/\p\hookrightarrow M$. If $m$ is even, this is parity preserving as a morphism of $R$-supermodules $R/\p\hookrightarrow M$. If $m$ is odd, this is parity preserving as a morphism of $R$-supermodules $R/\p\hookrightarrow\Pi M$. Conversely, if there is an injective morphism  $\phi:R/\p\hookrightarrow M$ or $\phi:R/\p\hookrightarrow \Pi M$. In any case, consider $m:=\phi(1)$. Then, $\p=\ann_R(m)$ and therefore $\p\in\ass(M)$.  
\end{proof}

\begin{proposition}[Prime avoidance]\label{Prop:PrimeAvoidance}
Let $\a, \a_1, \ldots, \a_d$ be superideals of a superring $R$, where at most two of the $\a_i$ are not prime. If $\a\subseteq\bigcup_{1\leq i\leq d} \a_i,$ then $\a$ is contained in one of the $\a_i$.    
\end{proposition}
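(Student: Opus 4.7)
The plan is to mimic the classical commutative proof by induction on $d$. The base case $d=1$ is trivial, and the case $d=2$ can be handled without invoking primeness at all: assuming $\a\not\subseteq\a_1$ and $\a\not\subseteq\a_2$, I pick $y_i\in\a\setminus\a_i$ for $i=1,2$; then $\a\subseteq\a_1\cup\a_2$ forces $y_1\in\a_2$ and $y_2\in\a_1$, and the element $y_1+y_2\in\a$ cannot lie in either $\a_i$ (otherwise $y_i$ would), contradicting the hypothesis.

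For the inductive step $d\geq 3$, since at most two of the $\a_i$ fail to be prime, at least one of them is prime; after relabeling I may assume $\a_d$ is prime. Assuming for contradiction that $\a\not\subseteq\a_i$ for every $i$, the induction hypothesis applied to the $d-1$ superideals $\{\a_j\}_{j\neq i}$ shows that $\a\not\subseteq\bigcup_{j\neq i}\a_j$ for each $i$. Hence I can choose $x_i\in\a\setminus\bigcup_{j\neq i}\a_j$, and necessarily $x_i\in\a_i$. I then form
\[
z:=x_1x_2\cdots x_{d-1}+x_d\in\a
\]
and verify that $z\notin\a_i$ for each $i$: when $i<d$, the product $x_1\cdots x_{d-1}$ lies in $\a_i$ because it has $x_i$ as a factor, while $x_d\notin\a_i$, so $z\notin\a_i$; when $i=d$, the primeness of $\a_d$ combined with $x_j\notin\a_d$ for all $j<d$ yields $x_1\cdots x_{d-1}\notin\a_d$, while $x_d\in\a_d$, so again $z\notin\a_d$. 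This contradicts $\a\subseteq\bigcup_{i}\a_i$.

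The only conceptual point that requires attention in transferring this argument to the super setting is that the chosen $x_i$ need not be homogeneous, so the product $x_1\cdots x_{d-1}$ is a sum of terms of mixed parity. This is harmless here because Remark \ref{rmk:prime-ideal}(iv) guarantees that every prime superideal of $R$ is \emph{completely prime}: the implication $xy\in\p\Rightarrow x\in\p\text{ or }y\in\p$ holds for arbitrary $x,y\in R$, not merely for homogeneous ones. This is precisely what allows the primeness of $\a_d$ to be invoked on the non-homogeneous product $x_1\cdots x_{d-1}$, and it is the single place where the super-structure could have caused trouble; with this observation in hand, the classical proof carries over verbatim.
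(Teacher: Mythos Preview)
Your proof is correct and follows exactly the same approach as the paper. The paper's own proof simply observes that prime superideals are completely prime (Remark~\ref{rmk:prime-ideal}~iv)) and then defers to the classical commutative argument in \cite[Lemma~3.3]{eisenbud}; you have written out that argument in full and explicitly flagged the one place (the non-homogeneous product $x_1\cdots x_{d-1}$) where complete primeness is actually needed.
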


\begin{proof}
    Recall that any prime ideal is completely prime (Remark \ref{rmk:prime-ideal} iv)), so the proof is same as in the commutative case (e.g.,  \cite[Lemma 3.3]{eisenbud}).
\end{proof}

\begin{corollary}\label{Cor3}
    Let $R$ be a Noetherian superring, $M$ a nonzero $R$-supermodule and $\a$ a superideal of $R$. Then $\a$ contains a non-zerodivisor on $M$ or it annihilates an element of $M$.
\end{corollary}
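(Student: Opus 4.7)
The plan is to establish the contrapositive: assume that $\a$ contains no non-zerodivisor on $M$, and deduce that $\a$ annihilates a nonzero element of $M$. Under this hypothesis, every element of $\a$ is either zero or a zerodivisor on $M$, so $\a\subseteq D(M)$. The first step is to apply Corollary \ref{Cor:2.8}, which identifies $D(M)=\bigcup_{\p\in\ass(M)}\p$, yielding the inclusion $\a\subseteq\bigcup_{\p\in\ass(M)}\p$.

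The second step is to reduce this potentially infinite union to a finite one, so that prime avoidance becomes applicable. Assuming $M$ is finitely generated (the intended setting for this result, as the corollary is meant to be used in combination with Proposition \ref{P:2.11}), one concludes that $\ass(M)$ is a finite set, say $\ass(M)=\{\p_1,\dots,\p_d\}$. Since each $\p_i$ is prime, prime avoidance in the form of Proposition \ref{Prop:PrimeAvoidance} applies and yields $\a\subseteq\p_i$ for some index $i$.

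Finally, by the very definition of associated prime, $\p_i=\ann(m)$ for some nonzero homogeneous element $m\in M$. The chain of inclusions $\a m\subseteq\p_i m=0$ then shows that $\a$ annihilates the nonzero element $m$, completing the argument.

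The main obstacle is ensuring the finiteness of $\ass(M)$, which is essential for invoking Proposition \ref{Prop:PrimeAvoidance} as stated (i.e., for a finite family of ideals). Without finite generation of $M$, Proposition \ref{P:2.11} is not available and $\ass(M)$ could in principle be infinite; one would then need an auxiliary reduction, for instance by passing to a finitely generated subsupermodule built from homogeneous witnesses to the zero-divisor property for a finite set of homogeneous generators of $\a$ (using that $R$ is Noetherian). In the natural context in which this corollary will be applied, however, $M$ is finitely generated, and the finite-union prime-avoidance argument sketched above delivers the conclusion cleanly.
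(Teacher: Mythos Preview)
Your approach is essentially identical to the paper's: assume $\a\subseteq D(M)$, invoke Corollary~\ref{Cor:2.8} to write $D(M)$ as the union of associated primes, then apply prime avoidance (Proposition~\ref{Prop:PrimeAvoidance}) to land $\a$ inside some $\p=\ann(m)$. You are in fact more careful than the paper: it applies prime avoidance to $\bigcup_{\p\in\ass(M)}\p$ without ever addressing why this union is finite, whereas you correctly flag that Proposition~\ref{Prop:PrimeAvoidance} needs a finite family and that finiteness of $\ass(M)$ comes from Proposition~\ref{P:2.11}, which requires $M$ finitely generated---a hypothesis absent from the statement but satisfied in the paper's sole application of this corollary (where the supermodule is $R$ itself).
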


\begin{proof}
Let $\a$ be  a superideal consisting of zerodivisors on $M$. Then, by Proposition \ref{Cor:2.8}, $\a$ is contained in the union of the associated primes of $M$. Thus, by Proposition \ref{Prop:PrimeAvoidance}, $\a$ must be contained in $\ann(m)$ for some  $m\in M$.
\end{proof}

Let $R$ be a Noetherian superring, $\p$ a prime ideal of $R$ and consider $M$ a finitely generated $R$-supermodule. Let $S\subseteq R\ev$ be a multiplicative set. The  proof of the next proposition relies upon the following result, which will be proved (independently) in the next section: 

\begin{equation}\label{ASS}
    \text{$\p\in\ass(M)$ and $\p\cap S=\emptyset$ \quad if and only if \quad $S^{-1}\p\in\ass_{S^{-1}R}(S^{-1}M)$. }
\end{equation}

\begin{proposition}\label{prop1}  
\it Let $R$ be a Noetherian superring. Then $K(R)$ has finitely many maximal ideals, and such ideals are the localizations of the maximal associated primes. 
\end{proposition}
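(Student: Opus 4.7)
The plan is to exploit (\ref{ASS}), the finiteness of $\ass(R)$ from Proposition \ref{P:2.11}, and prime avoidance (Proposition \ref{Prop:PrimeAvoidance}) to identify the maximal spectrum of $K(R)$ with the maximal elements of $\ass_{K(R)}(K(R))$, and then transport everything back to $R$.

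First I would verify that in $K(R)$ every nonunit is a zerodivisor. Write $K(R) = S^{-1}R$ with $S$ the set of non-zerodivisors of $R$. If $a/s \in K(R)$ is not a unit then $a \notin S$ (otherwise $s/a$ would be an inverse), so there is a nonzero homogeneous $b \in R$ with $ab = 0$; since $s$ is a non-zerodivisor, $b/1 \neq 0$ in $K(R)$, while $(a/s)(b/1) = 0$. Second, I would describe $\ass_{K(R)}(K(R))$ via (\ref{ASS}). If $\p = \ann_R(x) \in \ass(R)$ with $x$ nonzero and homogeneous, every element of $\p$ annihilates $x$, hence is a zerodivisor, so $\p \cap S = \emptyset$. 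By (\ref{ASS}), the assignment $\p \mapsto S^{-1}\p$ is a bijection $\ass(R) \to \ass_{K(R)}(K(R))$. Because $R$ is finitely generated over itself (by $1$), Proposition \ref{P:2.11} tells us $\ass(R)$ is finite, and hence so is $\ass_{K(R)}(K(R))$.

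Next I would identify the maximal ideals of $K(R)$ with the maximal elements of $\ass_{K(R)}(K(R))$. On the one hand, if $\m$ is a maximal ideal of $K(R)$, its elements are nonunits and therefore zerodivisors; applying Corollary \ref{Cor:2.8} to $K(R)$ as a supermodule over itself yields $\m \subseteq \bigcup_{\q \in \ass_{K(R)}(K(R))} \q$. Prime avoidance, available because the union is finite, forces $\m \subseteq \q$ for some $\q \in \ass_{K(R)}(K(R))$; since $\q$ is proper and $\m$ is maximal, $\m = \q$. On the other hand, any maximal element $\q$ of $\ass_{K(R)}(K(R))$ is a maximal ideal of $K(R)$: if $\m \supseteq \q$ is maximal, the preceding step shows $\m \in \ass_{K(R)}(K(R))$, and maximality of $\q$ in that set forces $\m = \q$. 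Since $\p \mapsto S^{-1}\p$ is inclusion-preserving, the maximal ideals of $K(R)$ are precisely the $S^{-1}\p$ with $\p$ maximal in $\ass(R)$, and there are finitely many of them.

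The main subtlety I anticipate is the appeal to (\ref{ASS}) to transfer the finiteness of $\ass(R)$ to $\ass_{K(R)}(K(R))$; this is promised to be proved independently in the next section, and the present proposition genuinely depends on it. Once that fact is granted, the argument reduces to the clean combination of Corollary \ref{Cor:2.8} with prime avoidance in a ring where every nonunit is a zerodivisor.
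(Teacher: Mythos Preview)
Your proof is correct and proceeds along the same lines as the paper's: show that maximal ideals of $K(R)$ consist of zerodivisors, invoke Corollary~\ref{Cor:2.8} together with prime avoidance to place each one in $\ass_{K(R)}(K(R))$, and then use (\ref{ASS}) to descend to $\ass(R)$. Your version is actually more complete, since you spell out the appeal to Proposition~\ref{P:2.11} for finiteness and verify that precisely the \emph{maximal} associated primes arise, both of which the paper leaves implicit.
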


\begin{proof}
Let $S$ be such that $K(R)=S^{-1}R$. Consider $\mathfrak{M}$ any maximal ideal of $K(R)$. It is not hard to see that    any element $m/r\in\mathfrak{M}$ is a zerodivisor on $K(R)$. It then follows that the prime ideal $\mathfrak{M}$ is formed by zero divisors, that is to say, 

\[
\mathfrak{M}\subseteq D_{K(R
)}(K(R))=\bigcup_{\mathfrak{P}\in\ass(K(R))}\mathfrak{P}.
\]
By prime avoidance, for some $\mathfrak{P}\in\ass_{K(R
)}(K(R))$, $\mathfrak{M}\subseteq\mathfrak{P}$. Thus, $\mathfrak{M}\in\ass_{K(R
)}(K(R))$, because $\mathfrak{M}=\mathfrak{P}$, for $\mathfrak{M}$ being maximal. Let $\mathfrak{M}=S^{-1}\m$. Then, $\m\cap  S=\emptyset$ and $\m\in\ass(R)$. That is, $\mathfrak{M}$ is the localization of an associated prime, as claimed.
\end{proof}


\subsection{Noetherian and Finitely Presented supermodules} 


\begin{definition}
Let $R$ be a superring and $M$ an $R$-supermodule. We call $M$ \emph{Noetherian} if it satisfies one   of the following equivalent conditions (\cite[Proposition 3.3.1]{westrathesis}):

\begin{itemize}
\item[i)] $M$ satisfies the ascending chain condition on its $R$-subsupermodules.
\item[ii)] Each $R$-subsupermodule $N$ of $M$ is finitely generated.
\item[iii)] Any non-empty collection of $R$-subsupermodules of $M$ has a maximal element.
\end{itemize}
\end{definition}
 
\begin{definition}\label{Def:loc.free}
Let $R$ be a superring and $M$ a supermodule over $R$.
\begin{itemize}

\item[i)] We say that $M$ is a \textit{free $R$-supermodule} if there are disjoint sets $I$ and $J$ such that 

\[
M\simeq\displaystyle\left(\bigoplus_{i\in I}R\right)\oplus\left(\bigoplus_{j\in J}\Pi R\right).
\]

\noindent We say that $M$ has \textit{finite rank} $n$ if $n=|I\cup J|$ is finite. We also say that $M$ has \textit{rank} $|I|\mid |J|$.

\item[ii)] We call $M$ a \textit{flat $R$-supermodule} if the functor $M\otimes_R $- is left-exact. 

\item[iii)] $M$ is called \emph{finitely presented} if there exists a short exact sequence of $R$-supermodules 
        \[
        0\to K\to F \to M\to 0
        \]
such that  $F$ has finite rank and $K$, the kernel of the morphism $F\to M$, is finitely generated. 
    \end{itemize} 
\end{definition}

Observe that, if $R$ is a Noetherian superring, then $M$ is a  finitely presented $R$-supermodule if and only if $M$ is finitely generated (\cite[Proposition 3.3.8]{westrathesis}).

Recall that given $\varphi: R\rightarrow R'$, a morphism of superrings, we can endow $R'$ with an $R$-supermodule structure induced by $\varphi$. Similarly, if $M$ is an $R$-supermodule, then $M_{R'}:=R'\otimes_R M$ admits a natural structure of $R'$-supermodule, where on $R'$ we assume the $R$-supermodule structure induced by $\varphi$.

\begin{remark}\label{rem:devissage}
Let $R$ be a Noetherian superring and consider $M$ and $N$ finitely generated $R$-supermodules. Since $S^{-1}R$ is a flat $R$-supermodule and the localization functor is exact, we find from \cite[Theorem 6.5.9]{westrathesis}, the isomorphism  $
S^{-1}R\otimes_R \homm_R(M, N)\stackrel{\simeq}{\longrightarrow}{\homm_{S^{-1}R}(S^{-1}R \otimes_R M,\, S^{-1}R\otimes_R N)}.
$ In particular, if $\p$ is a prime ideal of $R$, we obtain  $
\homm_R(M, N)_\p\stackrel{\simeq}{\longrightarrow}{\homm_{R_\p}(M_\p,\, N_\p)}.
$  
\end{remark} 

\begin{proposition}\label{Prop2}
Let $R$ be a  Noetherian superring with finitely many maximal ideals $\m_1,\ldots,\m_d$, and let $M$ and $N$ be finitely generated 
$R$-supermodules. If $M_{\m_i} \simeq N_{\m_i}$, for each $i=1,\ldots, d$, then $M \simeq N$.  
\end{proposition}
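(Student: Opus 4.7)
The plan is to patch the local isomorphisms $\phi_i:M_{\m_i}\to N_{\m_i}$ into a single parity-preserving morphism $\psi:M\to N$ and then invoke the Local-Global Principle (Proposition \ref{prop:localglobal}) to conclude that $\psi$ is an isomorphism. The engine that makes this possible is the Devissage Theorem (Proposition \ref{Westra:Theorem:6.5.9}), whose localized form (Remark \ref{rem:devissage}) realizes every local homomorphism as the germ of a genuine global one.

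First, I would note that by Proposition \ref{Prop:Noetherians} the finitely generated supermodules $M$ and $N$ are finitely presented, so Remark \ref{rem:devissage} yields
\[
\homm_R(M,N)_{\m_i}\stackrel{\simeq}{\longrightarrow}\homm_{R_{\m_i}}(M_{\m_i},N_{\m_i})
\]
for each $i$. Consequently each parity-preserving $\phi_i$ can be written as $\psi_i/s_i$ for some parity-preserving $\psi_i:M\to N$ and some $s_i\in R\ev\setminus(\m_i)\ev$, and thus $(\psi_i)_{\m_i}=s_i\phi_i$ is itself an isomorphism, since $s_i$ is a unit in $R_{\m_i}$ and $\phi_i$ is an isomorphism.

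The next step is to combine these lifts into $\psi:=\sum_{i=1}^d r_i\psi_i$ with a carefully chosen $r_i\in R\ev$. By Remark \ref{rmk:prime-ideal} v), every maximal ideal of $R$ is of the form $\m_i=(\m_i)\ev\oplus R\od$, so the ideals $(\m_i)\ev$ are pairwise distinct maximal ideals of the commutative ring $R\ev$. Prime avoidance (Proposition \ref{Prop:PrimeAvoidance}) applied inside $R\ev$ therefore produces, for each $i$, an even element $r_i\in\bigcap_{j\neq i}(\m_j)\ev$ with $r_i\notin(\m_i)\ev$. Localizing $\psi$ at $\m_j$, the $j$-th summand becomes $(r_j s_j)\phi_j$, which is an isomorphism because $r_j s_j$ is a unit in $R_{\m_j}$; the remaining summands $r_i(\psi_i)_{\m_j}$ with $i\neq j$ take values in $\m_j N_{\m_j}$ since $r_i\in(\m_j)\ev$. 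Proposition \ref{lem:nakaapp}, applied to the local Noetherian superring $R_{\m_j}$ with the isomorphism $(r_j s_j)\phi_j$ and the ``small'' perturbation $\sum_{i\neq j}r_i(\psi_i)_{\m_j}$, then forces $\psi_{\m_j}$ to be an isomorphism for every $j$; Proposition \ref{prop:localglobal} concludes $\psi:M\to N$ is itself an isomorphism.

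The main subtlety I anticipate is parity-preservation of the patched morphism: scaling an $R$-supermodule morphism by an element of $R$ yields another morphism only when the scalar is even, so both the lifts $\psi_i$ and the avoidance coefficients $r_i$ must be chosen in the even part. This is not a serious obstacle because the odd part of every maximal ideal of $R$ is automatically all of $R\od$, so the combinatorial core of prime avoidance reduces to a purely commutative statement about the distinct maximal ideals $(\m_i)\ev\subseteq R\ev$, and the even component of the Devissage isomorphism supplies parity-preserving lifts for free.
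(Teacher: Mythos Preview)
Your proposal is correct and follows essentially the same route as the paper's proof: lift the local isomorphisms via the Devissage isomorphism (Remark \ref{rem:devissage}), combine them with even coefficients obtained from prime avoidance, and then apply Proposition \ref{lem:nakaapp} together with the Local-Global Principle (Proposition \ref{prop:localglobal}). Your handling of the denominator $s_i$ (writing $(\psi_i)_{\m_i}=s_i\phi_i$ rather than claiming equality with $\phi_i$) and your direct use of the avoidance elements $r_i$ are in fact slightly cleaner than the paper's own presentation.
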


\begin{proof} The proof utilizes the same logical steps as the commutative case (\cite[Solution to Exercise 4.13]{eisenbud}).
\end{proof}

To conclude this section, we proceed to prove the equivalence in \eqref{ASS}.   

\begin{proposition}\label{Proposition:0.2} Let $R$ be a Noetherian superring, $\p$ a prime ideal and $M$ a finitely generated $R$-supermodule. Then, $\p\in\ass(M)$ and $\p\cap S=\emptyset$ if and only if $S^{-1}\p\in\ass_{S^{-1}R}(S^{-1}M)$. 
\end{proposition}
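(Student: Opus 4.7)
The plan is to reduce the statement to Proposition \ref{Proposition:0.1}, which characterizes $\p \in \ass(M)$ as the existence of an injective morphism $R/\p \hookrightarrow M$ or $R/\p \hookrightarrow \Pi M$. Since localization is exact and commutes with quotients (Proposition \ref{P:2.2:B} iii) and with the parity-shift functor $\Pi$, this criterion transfers cleanly between $R$ and $S^{-1}R$. The forward direction is then immediate: given an injection $R/\p \hookrightarrow M$ (the case $R/\p \hookrightarrow \Pi M$ is symmetric) and $\p \cap S = \emptyset$, applying $S^{-1}(-)$ yields an injection $S^{-1}R/S^{-1}\p \hookrightarrow S^{-1}M$, and $S^{-1}\p$ is a proper prime ideal of $S^{-1}R$ by the standard ideal-correspondence under localization. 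Proposition \ref{Proposition:0.1} applied over $S^{-1}R$ then gives $S^{-1}\p \in \ass_{S^{-1}R}(S^{-1}M)$.

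For the reverse direction, assume $S^{-1}\p \in \ass_{S^{-1}R}(S^{-1}M)$. First, $\p \cap S = \emptyset$, since otherwise $S^{-1}\p$ would be the unit ideal. The real task is to exhibit a nonzero homogeneous element of $M$ whose $R$-annihilator is exactly $\p$. Pick a nonzero homogeneous $m/s \in S^{-1}M$ with $\ann_{S^{-1}R}(m/s) = S^{-1}\p$; since $s/1$ is a unit in $S^{-1}R$, multiplying by it does not change the annihilator, so we may assume $s = 1$. Thus $\ann_{S^{-1}R}(m/1) = S^{-1}\p$ and $tm \neq 0$ in $M$ for every $t \in S$. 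By Noetherianity, $\p$ has finitely many homogeneous generators $a_1, \ldots, a_k$; each identity $(a_i/1)(m/1) = 0$ in $S^{-1}M$ produces some $t_i \in S$ with $t_i a_i m = 0$ in $M$. Setting $t := \prod_i t_i \in S$ gives $a_i (tm) = 0$ for all $i$, hence $\p \subseteq \ann_R(tm)$, and $tm \neq 0$.

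For the opposite inclusion, take a homogeneous $r \in \ann_R(tm)$. Then $(rt/1)(m/1) = 0$, so $rt/1 \in S^{-1}\p$; equivalently, there exists $u \in S$ with $urt \in \p$. Since $ut \in S$ lies outside $\p$ and $\p$ is completely prime by Remark \ref{rmk:prime-ideal} iv), we conclude $r \in \p$. Therefore $\ann_R(tm) = \p$, proving $\p \in \ass(M)$. The main obstacle lies precisely in this reverse direction: one must invoke Noetherianity to clear denominators \emph{uniformly} across all generators of $\p$, and use completely primeness to perform the final step on homogeneous elements. The super-specific parity bookkeeping is routine here, because both $\Pi$ and the quotient constructions involved commute with $S^{-1}(-)$.
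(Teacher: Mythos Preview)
Your proof is correct. The forward direction matches the paper's argument essentially verbatim: apply Proposition \ref{Proposition:0.1}, localize using exactness, and use that $S^{-1}\p$ remains prime when $\p\cap S=\emptyset$.

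For the converse, however, you take a genuinely different and more elementary route than the paper. The paper invokes the Devissage Theorem (Remark \ref{rem:devissage}) to identify
\[
\homm_{S^{-1}R}\bigl(S^{-1}(R/\p),\,S^{-1}M\bigr)\simeq S^{-1}\homm_R(R/\p,M),
\]
so that the given injection $\phi:S^{-1}R/S^{-1}\p\hookrightarrow S^{-1}M$ (or into $\Pi S^{-1}M$) has the form $s^{-1}\psi$ for some $\psi\in\hom_R(R/\p,M)$; one then checks $\psi$ is injective. This requires $R/\p$ to be finitely presented, which is where Noetherianity enters. Your argument avoids the $\homm$ machinery entirely: you pick a homogeneous witness $m/1$ with annihilator $S^{-1}\p$, use finite generation of $\p$ to clear denominators uniformly via a single $t\in S$, and then invoke complete primeness (Remark \ref{rmk:prime-ideal} iv)) to obtain $\ann_R(tm)=\p$. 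Both approaches use Noetherianity in the same essential place (finite generation of $\p$), but yours is closer to the classical commutative argument and does not rely on the flat-base-change isomorphism for $\homm$. The paper's approach, on the other hand, stays entirely within the categorical framework set up earlier and makes the role of Proposition \ref{Proposition:0.1} symmetric in both directions.
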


\begin{proof}
   The left-to-right implication follows from Proposition~\ref{Proposition:0.1}, together with the condition $\p \cap S = \emptyset$.  
The converse direction also follows easily from Proposition~\ref{Proposition:0.1} and Remark~\ref{rem:devissage}.
\end{proof}

\subsection{Krull superdimension and regularity}\label{Subsec:Krull:SuperDim}  


In this subsection we introduce  the superdimension theory of superrings recently developed by Masuoka and Zubkov \cite{MASUOKA2020106245} (see also  \cite{ZubkovKolesnikov}).

We denote by $\kdim(-)$ the usual Krull dimension in the commutative context. If $R$ is a commutative ring and $M$ is an $R$-module, we denote by $\text{dim}(M)$ \emph{the dimension of $M$ as $R$-module}.  

\begin{definition}
Let $R$ be a Noetherian superring such that $\kdim(R\ev)<\infty$ and let $a_1, \ldots, a_s\in R\od$. For any $I\subseteq\{1, 2, \ldots, s\}$, with $|I|=n$, let  $a^I:=a_{i_1}\cdots a_{i_{n}}$, where $I=\{i_1, \ldots, i_{n}\}$ and the product is taken in $R$. Let $\ann_{R\ev}(a^I)$ be the ideal of $R\ev$ consisting of all elements in $R\ev$ that annihilate $a^I$. 

\begin{itemize}
    \item[i)] If $\kdim(R\ev)=\kdim(R\ev/\ann_{R\ev}(a^I))$, we say that $a_{i_1}, \ldots, a_{i_{n}}$ form a \emph{system of odd parameters of length $n$}.
    
    \item[ii)] The greatest integer $m$ such that there exists a set of odd parameters of length $m$ is called the \emph{odd Krull superdimension} of $R$ and is  denoted by $m:=\ksdim\od(R)$. 
    
    \item[iii)] The \emph{even Krull superdimension} of $R$ is the usual Krull dimension of $R\ev$ and is denoted $\ksdim\ev(R)$.
    
    \item[iv)] The \textit{Krull superdimension} of $R$ is the pair  $\ksdim\ev(R)\mid\ksdim\od(R)$. 
\end{itemize}
\end{definition}

\begin{definition}
Let $(R, \m)$ be a local Noetherian superring, with $\K=R/\m$ its residue field. The superring $R$ is called 
\textit{regular} if 
$$ 
\ksdim(R)=\sdim_\K(\m/\m^2).
$$ 
\end{definition}

\begin{definition}\label{def:regular:superring}
Let $R$ be a Noetherian superring. We say that $R$ is \textit{regular} if it satisfies one of the 
following equivalent conditions (see  \cite[Corollary 5.4 and \S 5.2]{MASUOKA2020106245}).

    \begin{itemize}
        \item[i)] For every prime ideal $\p$ in $R$, the local superring $R_\p$ is regular.
        
        \item[ii)] For every maximal ideal $\m$ in $R$, the local superring $R_\m$ is regular.
    \end{itemize}   
\end{definition}

\begin{remark}
    Before finishing this section, it is worth mentioning that in \cite[Chapter 7, \S 7.1]{westrathesis}, Westra proposed another notion of dimension for a superring. Namely, if $(R, \mathfrak{m})$ is a Noetherian local superring, he  defined the following numbers:

\begin{itemize}
    \item  The  \textit{total dimension of} $R$, $\mathrm{Tdim}(R)$, which is the minimal number of generators of $\mathfrak{m}$.
    \item  The  \textit{bare dimension of} $R$, $\mathrm{Bdim}(R)$, which is the minimal number of generators of $\overline{\mathfrak{m}}$ in $\overline{R}$.
    \item The  \textit{Krull dimension of} $R$, $\kdim(R)$, which is given by the Krull dimension of $\overline{R}$. 
    \item The  \textit{odd dimension of} $R$, $\mathrm{Odim}(R)$, which is the minimal number of generators of $\mathfrak{J}_R$.
\end{itemize}

Westra proved that $\mathrm{Tdim}(R)=\dim_{R/\mathfrak{m}}(\mathfrak{m}/\mathfrak{m}^2)$. On the other hand, for the Krull superdimension of $R$ (as defined in \cite{MASUOKA2020106245}), we find that  $\ksdim(R)\leq\dim_{R/\mathfrak{m}}(\mathfrak{m}/\mathfrak{m}^2)$ and, $R$ is said to be \textit{regular} if the equality holds. 

We build upon the dimension theory presented in \cite{MASUOKA2020106245}.  Note, however, that if $R$ is a regular superring, the two definitions coincide. This fact is key in \S \ref{Sec:Dedekind}.   
\end{remark}


\section{Invertibility}\label{Sec:Invertibibility}

 
This section explores the concepts of invertible supermodules and fractional superideals, dividing itself into two subsections. In the first, we study the fundamental properties of these objects. In the second one, as an application of the theory developed in the first subsection, we construct the Picard  and Cartier groups  of a superring.  

Let $R$ be a commutative ring. Recall that if $S$ is the set of non-zerodivisors of $R$, then the localization $S^{-1}R$ is denoted by $K(R)$ and called the \textit{total ring of fractions of} $R$. If $R$ is furthermore a domain, $K(R)=S^{-1}R$, with $S=R-\{0\}$, is a field, known as the \textit{total field of fractions of} $R$, and there exists a natural embedding of $R$ into $K(R)$ given by the localization morphism. A natural question arises: given a superdomain $R$, is $K(R)$ a superfield? The answer is negative, as can be seen by considering  $R=\Bbbk[\, X\mid \theta_1, \,\theta_2\,]/(X\theta_1\theta_2)$. Furthermore, in general, it is not true that $K(R)=R_{\J_R}$ and we cannot consider $R$ as a subring of $R_{\J_R}$, because the localization morphism $R\to R_{\J_R}$ is not always injective. Therefore, to establish a robust theory of invertible supermodules and fractional superideals, we will work within $K(R)$ rather than $Q(R):=R_{\J_R}$. 

Recall that a superring $R$ is called a superdomain if $\overline{R}$ is a domain or, equivalently, $\J_R$ is a prime ideal. We say that $R$ is a \textit{strong superdomain}, if, additionally, $R$ satisfies one (and hence all) of the following equivalent conditions:

        \begin{enumerate}
        \item[i)] The localization map $R\hookrightarrow R_{\J_R}$ is injective. 
        
        \item[ii)]  Any element $r\in R-\J_R$ is a non-zerodivisor. 
        
        \item[iii)] Any element  $r\in R\ev-R\od^2$ is a non-zerodivisor.
    \end{enumerate}


\subsection{Invertible supermodules and Fractional superideals}


\begin{definition}
Let $R$ be a superring and $M$ an $R$-supermodule. We say that $M$ is \textit{invertible} if:

\begin{enumerate}
\item[i)] $M$ is finitely generated, and 
\item[ii)] for every prime ideal $\p$ of $R$, $M_\p\simeq R_\p $.
\end{enumerate}

\noindent In other words,  $M$ is invertible if it is finitely generated and locally free of rank $1\mid 0$.
\end{definition}

\begin{definition}
Let $R$ be a superring and let $K(R)$ be the total superring of fractions of $R$. 
 
\begin{enumerate}  
\item[i)] An $R$-subsupermodule $M$ of $K(R)$ is called a \textit{fractional superideal of} $R$ if there exists a homogeneous non-zerodivisor $d\in R$ such that $d M\subseteq R$.

\item[i)] If $M$ is an $R$-subsupermodule of $K(R)$, we denote by $M^{-1}$ the $R$-supermodule generated by the homogeneous elements $d\in K(R)$ such that $dM\subseteq R$.
\end{enumerate}
\end{definition}

Note that any superideal of $R$ is also a fractional superideal, just take $d=1$. In the purely even case, these are usually called \textit{integral ideals}.  For fractional superideals $M$ and $N$ of $R$, we denote by $(M:N)$, the $R$-supermodule generated by the homogeneous elements $z\in K(R)$ such that $zN\subseteq M$. Here $R^{\ast}$ stands for the set of units of the ring $R$. 

An element that is a non-zerodivisor on a ring is called \textit{regular}.\index{regular!element} A \textit{regular ideal}\index{regular!ideal} is an ideal containing a regular element. The following proposition is proved similar to \cite[Proposition 25.3]{altman2013term}.  Recall that $D(R)$ denotes the zerodivisors of $R$.

\begin{proposition}\label{P:3.2:ALTMAN}
 Consider the following conditions for an $R$-subsupermodule $M$ of $K(R)$.

    \begin{enumerate} 
    \item[\rm i)] There exist superideals $\a$ and $\b$ of $R$, where $\b$ is regular,  such that $M=(\a:\b)$.

    \item[\rm ii)] There exists a homogeneous $z\in K(R)^*$, such that $z M\subseteq R$.

    \item[\rm iii)] There exists a homogeneous $z\in R-D(R)$, such that $zM\subseteq R$.

    \item[\rm iv)] $M$ is finitely generated.  
    \end{enumerate}

Then, i), ii) and iii) are equivalent and iv) implies  i), ii) and iii). Moreover, if $R$ is  Noetherian, then i)-iv) are equivalent, and the equivalence between  i)-iv) implies that $R$ is Noetherian.  \qed 
\end{proposition}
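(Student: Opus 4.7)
The plan is to adapt the proof of the commutative version (Altman--Kleiman, Proposition~25.3) to the super setting, carefully tracking homogeneity at every step. The key enabling observation is that in a supercommutative ring in which $2$ is a non-zerodivisor, every homogeneous non-zerodivisor is necessarily even: for odd $b$, supercommutativity gives $b^2 = -b^2$, so $2b^2 = 0$ and hence $b^2 = 0$, making any nonzero odd element a zerodivisor. Consequently, the multiplicative set used to form $K(R)$ lies inside $R\ev$, and every unit of $K(R)$ can be written as $z = a/s$ with $a, s \in R\ev$ both non-zerodivisors.

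For the cycle i) $\iff$ ii) $\iff$ iii), the implication iii) $\Rightarrow$ ii) is immediate since an even non-zerodivisor of $R$ is a unit in $K(R)$. For ii) $\Rightarrow$ iii), I would factor a homogeneous unit as $z = a/s$ with $a,s$ even non-zerodivisors; then $aM = s(zM) \subseteq sR \subseteq R$, so $a$ witnesses iii). For iii) $\Rightarrow$ i), set $\a := zM$ (a superideal of $R$, since $z$ is even and $zM \subseteq R$) and $\b := zR$ (a regular superideal, since $z$ itself is regular); the inclusion $M \subseteq (\a:\b)$ is immediate from $zM = \a$, and if $w \in (\a:\b)$ then $wz = zm$ for some $m \in M$, giving $w = m$ by cancelling the non-zerodivisor $z$ in $K(R)$. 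Finally, i) $\Rightarrow$ ii) requires extracting a homogeneous regular element from the regular superideal $\b$, which is the main technical wrinkle: the supercommutative identity $b \cdot (b\ev - b\od) = b\ev^2$ (using $b\od^2 = 0$) shows that whenever $b = b\ev + b\od \in \b$ is regular, so is $b\ev^2$, hence also $b\ev$, which lies in $\b$ by the $\Z_2$-grading and provides the required even witness.

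For iv) $\Rightarrow$ iii), given homogeneous generators $m_i = x_i/s_i$ of $M$, the product $s := s_1 \cdots s_n \in R\ev$ is a non-zerodivisor satisfying $sM \subseteq R$. If $R$ is additionally Noetherian, then iii) $\Rightarrow$ iv) follows from Proposition~\ref{P:2.5}: $zM$ is a superideal of $R$, hence finitely generated by homogeneous elements $y_i = zm_i$ with $m_i \in M$, and cancelling $z$ shows that $m_1, \ldots, m_n$ generate $M$. Conversely, if iii) $\Rightarrow$ iv) holds for every $R$-subsupermodule of $K(R)$, then every superideal $\a$ of $R$ satisfies iii) with the trivial witness $z = 1$, hence iv) forces $\a$ to be finitely generated, so $R$ is Noetherian.
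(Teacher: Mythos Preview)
Your argument follows essentially the same route as the paper's (same witnesses $\a = zM$, $\b = zR$ for iii) $\Rightarrow$ i), same product of denominators for iv) $\Rightarrow$ iii), same $z=1$ trick for the Noetherian converse), and you are actually more careful than the paper at one point: in i) $\Rightarrow$ ii) the paper simply writes ``let $z \in \b - D(R)$ be a homogeneous element'' without explaining why a \emph{homogeneous} regular element of $\b$ exists, whereas you supply an argument.

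However, your justification of that step via the identity $b(b\ev - b\od) = b\ev^2$ does not quite close. To conclude that $b\ev^2$ is regular you would cancel $b$ in $b\cdot[(b\ev - b\od)m] = 0$, but $(b\ev - b\od)m$ is not homogeneous when $m$ is, and the paper's definition of zerodivisor (hence of ``regular'') only tests against homogeneous elements; unwinding parities one sees that this equation reduces tautologically to $b\ev^2 m = 0$ and yields nothing new. The fix is a direct case split: if $b\ev m = 0$ for some nonzero homogeneous $m$, then either $b\od m = 0$ (so $bm = 0$) or $b\od m \neq 0$ is homogeneous with $b(b\od m) = b\od(b\ev m) + b\od^2 m = 0$; either way $b$ annihilates a nonzero homogeneous element, contradicting regularity. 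This delivers $b\ev \in \b$ as the required even regular witness, and the rest of your proof goes through unchanged.
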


\begin{remark}  Let $R$ be a superring and $M$ an $R$-subsupermodule of $K(R)$. 

\begin{itemize} 
    \item[i)] If $M$ is finitely generated, then $M$ is isomorphic to  a superideal of $R$, by Proposition \ref{P:3.2:ALTMAN}. Hence, we can view $M$ as a fractional superideal of $R$.

    \item[ii)] If $M$ is nonzero, then $M^{-1}$ is a fractional superideal of $R$. Indeed, since $M\neq0$, there exists a nonzero $y\in h(R)$ and $x\in R\ev-D(R\ev)$, such that $y/x\in M$. Then, $y=x\cdot y/x\in M\cap R$. Since $M^{-1}M\subseteq R$,  we find that $yM^{-1}\subseteq R$. In other words, $M^{-1}$ is a fractional superideal of $R$. 
\end{itemize}    
\end{remark}

Let $R$ be a Noetherian superring and $M$ an $R$-supermodule. The $R$-supermodule 
 $M^{\vee}=\homm_R(M, R)$ is called \textit{the dual supermodule of $M$}. In this case, we can construct a natural ``evaluation'' morphism  
\begin{eqnarray*}
    \mu:M^{\vee}\otimes_R M&\to&R\\
    \varphi\otimes a&\mapsto&\varphi(a).
\end{eqnarray*} 

Below we will show that if $M$ is an invertible $R$-supermodule, then $\mu$ is an isomorphism. The converse statement include one more case than in the purely even case. Namely, we will show that if $\mu$ is an isomorphism, then $M$ is finitely generated and locally free of rank $1\mid 0$ or $0\mid 1$.

Much of the reasoning involved below is inspired in the proof of \cite[Theorem 11.6]{eisenbud} and in the work on fractional ideals in \cite[Chapter 25]{altman2013term}.

\begin{proposition}\label{prop:Eisenbud11.6-a} Let $R$ be a Noetherian superring and $M$ any $R$-supermodule. If $M$ is invertible, then $\mu$  is an isomorphism. Conversely,  if $\mu$ is an isomorphism, then $M$ is finitely generated and locally free of rank $1\mid 0$ or $0\mid 1$. Moreover, if $\mu$ is an isomorphism and $M\otimes_R N\simeq R$ for some  $R$-supermodule $R$, then $N\simeq\homm_R(M, R)$. 
\end{proposition}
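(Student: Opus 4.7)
My plan is to prove the three assertions in turn, using localization and the Local-Global Principle (Proposition \ref{prop:localglobal}) as the main structural tool.

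For the forward direction I will show $\mu_\p$ is an isomorphism for every prime $\p$, whence Proposition \ref{prop:localglobal} gives that $\mu$ itself is an iso. Since $M$ is invertible it is finitely generated, hence finitely presented over the Noetherian superring $R$ by Proposition \ref{Prop:Noetherians}; Remark \ref{rem:devissage} then supplies $(M^\vee)_\p \simeq (M_\p)^\vee$. Combined with Proposition \ref{P:2.2:B} i), $\mu_\p$ is identified with the super-evaluation $(M_\p)^\vee \otimes_{R_\p} M_\p \to R_\p$, and under the isomorphism $M_\p \simeq R_\p$ this reduces to the multiplication $R_\p \otimes_{R_\p} R_\p \to R_\p$, which is an iso by Proposition \ref{prop:basicpros} vi).

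For the converse, the key step is to extract finite generation from $\mu$ being an iso; once this is done the rest proceeds via standard local arguments. From surjectivity of $\mu$ we obtain homogeneous $\varphi_i \in M^\vee$, $m_i \in M$ with $1 = \sum_{i=1}^{n} \varphi_i(m_i)$. The main trick is to note that both natural maps
\[
\beta_1, \beta_2 : M^\vee \otimes_R M \otimes_R M \to M
\]
given by $\beta_1(\varphi \otimes a \otimes b) = \varphi(a)b$ and $\beta_2(\varphi \otimes a \otimes b) = (-1)^{|a||b|} \varphi(b)a$ are isomorphisms: $\beta_1 = \mu \otimes \mathrm{id}_M$, and $\beta_2$ is the same composed with the super-symmetric swap on the last two factors, both of which are isos. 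Hence $\eta := \beta_2 \circ \beta_1^{-1}$ is an automorphism of $M$. Since $\beta_1(\sum_i \varphi_i \otimes m_i \otimes m) = m$, we get $\beta_1^{-1}(m) = \sum_i \varphi_i \otimes m_i \otimes m$ and
\[
\eta(m) = \sum_{i} (-1)^{|m_i||m|} \varphi_i(m)\, m_i \in Rm_1 + \cdots + Rm_n.
\]
Since $\eta$ is surjective, $M = Rm_1 + \cdots + Rm_n$, so $M$ is finitely generated.

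Now $M$ is finitely presented, so Devissage together with Proposition \ref{P:2.2:B} gives $\mu_\p : (M_\p)^\vee \otimes_{R_\p} M_\p \simeq R_\p$ for every prime $\p$. Tensoring with the residue field $\K = R_\p/\p_\p$ and reapplying Devissage yields an iso $(\overline{M_\p})^\vee \otimes_\K \overline{M_\p} \simeq \K$ of super-$\K$-vector spaces. Writing $\sdim_\K \overline{M_\p} = a \mid b$, the left-hand side has superdimension $(a^2 + b^2) \mid 2ab$, so $a^2 + b^2 = 1$ and $ab = 0$, forcing $(a,b) \in \{(1,0), (0,1)\}$. Super-Nakayama then shows $M_\p$ is generated by a single homogeneous element $m_0$, so $M_\p \simeq R_\p/I$ or $\Pi R_\p/I$ with $I = \ann_{R_\p}(m_0)$. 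Identifying $M_\p^\vee \otimes_{R_\p} M_\p$ with $\ann_{R_\p}(I) \subseteq R_\p$ (using $I \cdot \ann_{R_\p}(I) = 0$), the surjectivity of $\mu_\p$ forces $\ann_{R_\p}(I) = R_\p$, hence $I = 0$; thus $M_\p$ is free of rank $1 \mid 0$ or $0 \mid 1$. Finally, for the ``moreover'' claim, tensoring $\mu$ with $N$ and invoking $M \otimes_R N \simeq R$ gives
\[
M^\vee \simeq M^\vee \otimes_R R \simeq M^\vee \otimes_R M \otimes_R N \simeq R \otimes_R N \simeq N.
\]
The principal obstacle is the finite generation step in the converse; once the $\beta_1, \beta_2$ trick settles it, the remainder is a routine combination of localization, super-Nakayama, and superdimension counting.
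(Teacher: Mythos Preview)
Your forward direction and the ``moreover'' clause match the paper's proof exactly. Your finite-generation argument via $\beta_1,\beta_2$ is also essentially the paper's: the paper builds the same automorphism $\alpha:M\to M$ as the composite
\[
M\simeq M\otimes_R R\xrightarrow{\mathrm{id}\otimes\mu^{-1}} M\otimes_R(M^\vee\otimes_R M)\simeq(M^\vee\otimes_R M)\otimes_R M\xrightarrow{\mu\otimes\mathrm{id}} R\otimes_R M\simeq M,
\]
and observes that its image lies in $\sum Rm_i$; your $\eta=\beta_2\circ\beta_1^{-1}$ is this map in different notation.

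The gap is in the local step of the converse. You invoke Devissage (Proposition \ref{Westra:Theorem:6.5.9}) for the base change $R_\p\to\K=R_\p/\p_\p$ to obtain $(M_\p)^\vee\otimes_{R_\p}\K\simeq(\overline{M_\p})^\vee$, but that proposition requires $R'$ to be \emph{flat} over $R$, and the residue field is essentially never flat over a local ring of positive dimension. Consequently the identification that leads to your count $(a^2+b^2)\mid 2ab$ is unjustified. The statement you actually get by tensoring $\mu_\p$ with $\K$ is only
\[
\bigl[(M_\p)^\vee\otimes_{R_\p}\K\bigr]\otimes_\K\overline{M_\p}\;\simeq\;\K,
\]
with no a priori control on the superdimension of the first factor. (In fact this can still be made to work: $(M_\p)^\vee$ is finitely generated since $R$ is Noetherian, so both factors are finite-dimensional, and an elementary case analysis on $(ac+bd)\mid(ad+bc)=1\mid0$ forces $\sdim_\K\overline{M_\p}\in\{1\mid0,\,0\mid1\}$. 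But that is not the argument you wrote.)

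The paper avoids this entirely by a more direct local computation: in $R_\p$ the relation $1=\sum\varphi_i(m_i)$ forces some $\varphi_j(m_j)$ to be a unit, and then $\gamma:M_\p\to R_\p$, $x\mapsto u^{-1}\varphi_j(x)$ is surjective (it hits $1$) and injective because $\sigma\circ\gamma$ equals the automorphism $\alpha$, where $\sigma(a)=am_j$. The parity of $\gamma$ decides whether $M_\p\simeq R_\p$ or $M_\p\simeq\Pi R_\p$. This bypasses any base-change to the residue field and any flatness hypothesis.
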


\begin{proof}

The left-to-right implication follows identical to the purely even case.  Conversely, suppose that $\mu$ is an isomorphism. We first show that $M$ is finitely generated. Indeed, since the morphism $\mu$ is surjective, for some nonzero homogeneous elements $\varphi_i\in M^\vee$ and $m_i\in M$ with $i\in\{1, 2, \ldots, n\}$, we have $1=\mu(\sum \varphi_i\otimes m_i)$ with $|\varphi_i|=|m_i|$ for all $i\in\{1, 2, \ldots, n\}$. We claim that $M$ is generated by the $m_i$. Consider  an arbitrary element $m\in M$. Then, $a_i:=\mu(\varphi_i\otimes m)\in R$. Let $\alpha$ be the following composition of isomorphism  

 
\[
M\simeq M\otimes_R R\xlongrightarrow{\sim} M\otimes_R(M^\vee\otimes_R M)\simeq(M\otimes_R M^\vee)\otimes_R M\simeq (M^\vee \otimes_R M)\otimes_R M\xlongrightarrow{\sim} R\otimes_R M\simeq M.
\]


Note that $\alpha:M\to M,\, m\mapsto\sum a_im_i$, is an isomorphism. In other words, the $m_i$ are homogeneous elements generating $M$.  Thus, $M$ is a finitely generated  $R$-supermodule.

It only remains to show that if $\p$ is a prime ideal of $R$, then $M_\p\simeq R_\p$ or $M_\p\simeq\Pi R_\p$. For this, we can restrict to the case when $R$ is local  (otherwise it suffices to use the fact that $\mu_\p$ is an isomorphism for all $\p$). Since $R$ is local, then $R-R^*$ is an ideal. This implies that for some $j\in\{1, 2, \ldots, n\}$, the element $u_j:=\mu(\varphi_j\otimes m_j)$ belongs to $R^*$. Let $\phi_j=u^{-1}_j\varphi_j$ and $m=m_j$. Note that $\phi_j$ is homogeneous with the same grading as $\varphi_j$. Further, $\mu(\varphi_j\otimes m)=1$ and the morphism  $\gamma:M\to R$ defined by $x\mapsto \mu(\varphi\otimes x)$ sends $m\mapsto 1$. Hence, $\gamma$ is surjective. Moreover, the morphism $\sigma:R\to M, a\mapsto am$, is such that $\sigma\circ\gamma=\alpha$. Since $\alpha$ is injective, we conclude that $\gamma$ is  injective, \textit{a fortiori}, an  isomorphism. If $\gamma$ is even, then $M\simeq R$, and if $\gamma$ is odd, then $M\simeq\Pi R$. 
\end{proof}

\begin{proposition}\label{P:Eisenbud:11.6-b}  Let $R$ be a Noetherian superring.

\begin{enumerate} 
    \item[\rm i)] Any invertible $R$-supermodule is isomorphic to a fractional superideal of $R$.
    \item[\rm ii)] Any invertible fractional superideal contains an even non-zerodivisor of $R$.
\end{enumerate}     
\end{proposition}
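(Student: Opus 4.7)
The strategy is to construct an injective $R$-morphism $\tilde\varphi\colon M\to K(R)$ with finitely generated image, so that by the implication (iv)$\Rightarrow$(iii) of Proposition \ref{P:3.2:ALTMAN} the image is a fractional superideal. Let $S\subseteq R\ev$ be the set of non-zerodivisors, so $K(R)=S^{-1}R$. The key step is to show that $S^{-1}M\simeq K(R)$ as $K(R)$-supermodules. By Proposition \ref{prop1} the maximal ideals of $K(R)$ are finite in number, each of the form $S^{-1}\p_i$ with $\p_i\in\ass(R)$. Combining the transitivity of localization (Proposition \ref{P:2.2:A}) with the invertibility of $M$, one obtains $(S^{-1}M)_{S^{-1}\p_i}\simeq M_{\p_i}\simeq R_{\p_i}\simeq K(R)_{S^{-1}\p_i}$ for every $i$, and Proposition \ref{Prop2} then produces the desired isomorphism $\varphi\colon S^{-1}M\to K(R)$.

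\textbf{Injectivity for part i).} Let $\tilde\varphi$ be the composition of the localization map $M\to S^{-1}M$ with $\varphi$. A preliminary observation is that every $s\in S$ is a non-zerodivisor on $M$: for any prime $\p$ of $R$, the element $s/1\in R_\p$ is a non-zerodivisor, and the isomorphism $M_\p\simeq R_\p$ transports this property to $M_\p$; Proposition \ref{Prop_M:local} then forces $\ann_M(s)=0$. Consequently $\tilde\varphi(m)=0$ implies $m/1=0$ in $S^{-1}M$, so $sm=0$ for some $s\in S$ and hence $m=0$. Since $\tilde\varphi(M)$ is a finitely generated $R$-subsupermodule of $K(R)$, Proposition \ref{P:3.2:ALTMAN} finishes part i).

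\textbf{Part ii).} By part i) we may assume $M\subseteq K(R)$, and fix a homogeneous non-zerodivisor $d\in R$ with $dM\subseteq R$; necessarily $d$ is even, since an odd homogeneous non-zerodivisor would satisfy $d^2=0$ by supercommutativity and the invertibility of $2$. Running the argument of part i) again, the injection $S^{-1}M\hookrightarrow K(R)$ identifies $S^{-1}M$ with the $K(R)$-subsupermodule $K(R)\cdot M$, and $x:=\varphi^{-1}(1)$ is an even element of $K(R)\cdot M$. Writing $x=y/s$ with $y\in M$ and $s\in S$, evenness of $x$ (and of $s\in S\subseteq R\ev$) forces $y$ to be even; then $\varphi(y)=s$ is a unit in $K(R)$, so $yK(R)$ is carried isomorphically onto $K(R)$, whence $K(R)\cdot M=yK(R)$ and $y$ is a non-zerodivisor in $K(R)$. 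The element $dy$ then lies in $M$ (by $R$-linearity) and in $R$ (since $dM\subseteq R$), is even, and is a non-zerodivisor in $K(R)$ as a product of two such; since $R\hookrightarrow K(R)$, a non-zerodivisor of $K(R)$ lying in $R$ is automatically a non-zerodivisor of $R$, so $dy\in M$ is the required even non-zerodivisor of $R$. The main obstacle throughout is the identification $S^{-1}M\simeq K(R)$, which pivots on the finiteness of the maximal spectrum of $K(R)$ furnished by Proposition \ref{prop1}; the extra care in part ii) is to track evenness so that the chosen generator $y$ lies in $M$ and not merely in $K(R)\cdot M$.
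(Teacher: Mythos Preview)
Your treatment of part i) follows the paper's almost exactly: both establish $S^{-1}M\simeq K(R)$ by combining Proposition~\ref{prop1} (finiteness of the maximal spectrum of $K(R)$) with Proposition~\ref{Prop2}, and then embed $M$ via the localization map. Your injectivity check (each $s\in S$ acts as a non-zerodivisor on $M$, verified locally) is a mild variant of the paper's argument, which instead checks that $(\lambda_M^S)_\p$ is injective for every prime $\p$.

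For part ii) the approaches genuinely diverge. The paper argues by contrapositive: if $M\cap R$ consisted entirely of zerodivisors, then Corollary~\ref{Cor3} produces a nonzero homogeneous $b$ annihilated by $uM$ (where $u$ is a denominator for $M$), and choosing a prime $\p$ containing $\ann(ub)$ yields $M_\p\not\simeq R_\p$; the evenness is then read off from the $\Z_2$-grading. You instead reuse the isomorphism $\varphi\colon S^{-1}M\to K(R)$ from part i) to manufacture an explicit element: taking $y\in M\ev$ with $\varphi(y/1)=s\in K(R)^*$, you compare the two $K(R)$-linear maps $\iota,\varphi\colon S^{-1}M\to K(R)$ to conclude that multiplication by $y$ on $K(R)$ factors through multiplication by the unit $s$, hence $y$ is a non-zerodivisor in $K(R)$; then $dy\in M\cap R$ is the desired even non-zerodivisor. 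Your route is constructive and avoids the associated-prime machinery, at the cost of keeping $\iota$ and $\varphi$ carefully distinguished (your phrase ``$yK(R)$ is carried isomorphically onto $K(R)$'' really refers to $\varphi\circ\iota^{-1}$, and the non-zerodivisor conclusion needs that factorization spelled out). The paper's route is shorter but less explicit about which element of $M$ does the job.
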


\begin{proof} i) Suppose that $M$ is an invertible $R$-supermodule. Since $R$ is Noetherian, by Proposition \ref{prop1}, $K(R)$ has a finite number of maximal ideals, that are the localizations of some maximal associated primes of $R$. Let $S$ be the set of non-zerodivisors of $R\ev$. Hence, if $\p$ is an associated prime of $R$, then all the elements of $\p$ are zerodivisors, {\it a fortiori}, $\p\cap S=\emptyset$. We find that $S^{-1}R=K(R)$, $S^{-1}\p=\p K(R)$ and $K(R)_{\p K(R)} \simeq R_\p$. By \cite[Proposition 5.1.16]{westrathesis}, if $M$ is an  $R$-supermodule which is invertible, then 

\[
M\otimes_R K(R)_{\p K(R)}\simeq M_\p\simeq R_\p\simeq K(R)_{\p K(R)}.
\]

\noindent Thus,  we obtain the isomorphisms 

\[
(M\otimes_R K(R))_{\p K(R)} \simeq M\otimes_R K(R)_{\p K(R)} \simeq K(R)_{\p K(R)},
\]

\noindent 
which implies that $M\otimes_R K(R) \simeq K(R)$, by Proposition \ref{Prop2}. Now, we claim that the localization map $$\lambda_M^S:M\to K(R)\otimes_R M \simeq S^{-1}M$$ is injective. Indeed, for each prime ideal $\p$ of $R$, the morphism $(\lambda_M^S)_\p:M_\p \simeq R_\p\to K(R)\otimes_R R_\p \simeq S^{-1}R_\p$ is injective, and consequently, we find that $\lambda_M^S$ is injective. Thus, we use the embedding $M\hookrightarrow K(R)\otimes_R M$ and the isomorphism $K(R)\otimes_R M \simeq K(R)$, to identify $M$ isomorphically with an $R$-subsupermodule $N$ of $K(R)$. In particular, $N$ is finitely generated and, since $R$ is Noetherian (by Proposition \ref{P:3.2:ALTMAN} iii)) there exists $d\in S$ such that $dN\subseteq R$, i.e., $N$ is a fractional superideal of $R$.

ii)  We will show the contrapositive. Suppose that $M\subseteq K(R)$ is a fractional superideal such that $M\cap R$ consists of zerodividors. There is a homogeneous non-zerodivisor $u\in R$ such that $uM\subseteq R$. By Corollary \ref{Cor3}, there exists a nonzero homogeneous  element $b\in R$ annihilated by $uM$. Hence, it follows that $M$ is annihilated by $ub$. Now, by Proposition \ref{Cor:2.8}, there exists a prime $\p$, which contains the annihilator of $ub$. Suppose, for a contradiction, that there is an isomorphism $\phi: R_\p\to M_\p$ and consider any $r\in R-\p$. Then, $\phi(ubr)=ub\phi(r)=0$, since $ub$ annihilates $M$. Since $\phi$ is an isomorphism, we find that $ubr=0$, that is, $r$ annihilates $ub$, contradicting the choice of $\p$ and of $r$.  Therefore,  $M_\p\not \simeq R_\p$,
and it follows that $M$ is not invertible. Therefore, $M$ contains a non-zerodivisor. Since $M$ is $\Z_2$-graded, we can assume that it contains an even non-zerodivisor.  
\end{proof}
 
In the following, unless otherwise stated, $R$ denotes a Noetherian superring. 

\begin{proposition}\label{P:Eisenbud:11.6-c} Let $R$ be a Noetherian superring. If $M, N$ are  $R$-subsupermodules of $K(R)$ which are invertible, then the natural morphism

\begin{eqnarray*}
    \phi:M^{-1}N&\to&\homm_R(M, N)\\
    t&\mapsto&\function{\phi_t}{M}{N}{m}{tm}
\end{eqnarray*} 

\noindent is an isomorphism of $R$-supermodules.
\end{proposition}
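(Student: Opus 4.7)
The plan is to use the Local--Global Principle (Proposition \ref{prop:localglobal}) together with the structural results from Propositions \ref{prop:Eisenbud11.6-a} and \ref{P:Eisenbud:11.6-b}. First, I would verify that $\phi$ is a well-defined morphism of $R$-supermodules: every element of $M^{-1}N$ is a finite sum $t=\sum_i d_i n_i$ with $d_i\in M^{-1}$ and $n_i\in N$ homogeneous, and for any $m\in h(M)$ supercommutativity yields $d_i n_i m=\pm\, n_i(d_i m)\in N$ because $d_i m\in R$ by definition of $M^{-1}$. Thus $\phi_t(m)=tm\in N$, and linearity together with parity preservation are immediate.

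By Proposition \ref{prop:localglobal}, it suffices to prove that $\phi_\p$ is an isomorphism of $R_\p$-supermodules for every prime $\p$ of $R$. Since $R$ is Noetherian and $M$ is finitely generated (hence finitely presented), Remark \ref{rem:devissage} gives $\homm_R(M,N)_\p\simeq\homm_{R_\p}(M_\p,N_\p)$, and Proposition \ref{P:2.2:B} iv) gives $(M^{-1}N)_\p=(M^{-1})_\p\cdot N_\p$ inside $K(R)_\p$. By invertibility, $M_\p\simeq R_\p$ and $N_\p\simeq R_\p$ as rank $1\mid 0$ supermodules, so inside $K(R)_\p$ we may write $M_\p=R_\p u$ and $N_\p=R_\p v$ with $u$ and $v$ even generators which, by Proposition \ref{P:Eisenbud:11.6-b}, can be chosen to be units of $K(R)_\p$. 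Evaluation at $u$ then identifies $\homm_{R_\p}(M_\p,N_\p)$ with $N_\p$, and under this identification $\phi_\p$ becomes the multiplication map $t\mapsto tu$.

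The crux of the argument is to show that $(M^{-1})_\p$ coincides with $u^{-1}R_\p$ inside $K(R)_\p$. For this, I would localize the evaluation isomorphism $\mu\colon M^\vee\otimes_R M\xrightarrow{\sim}R$ from Proposition \ref{prop:Eisenbud11.6-a} and trace through the identifications above to deduce that the natural map $(M^{-1})_\p\to R_\p$, $x\mapsto xu$, is an isomorphism. Granting this, $(M^{-1}N)_\p=u^{-1}R_\p\cdot vR_\p=u^{-1}vR_\p$ and $\phi_\p(u^{-1}vr)=vr$ defines the desired isomorphism onto $N_\p$. The main obstacle is precisely this identification of $(M^{-1})_\p$ with $\{x\in K(R)_\p\mid xM_\p\subseteq R_\p\}=u^{-1}R_\p$; its proof requires careful use of Noetherianity to exploit finite generators of $M$, together with the fact (from Proposition \ref{P:Eisenbud:11.6-b}) that $M$ contains an even non-zerodivisor, to ensure that $u$ can be arranged to be a unit of $K(R)_\p$ so that $u^{-1}$ makes sense and that any abstract element of the local inverse descends to a representative $d/s$ with $d\in M^{-1}$ and $s\in R\ev\setminus\p$.
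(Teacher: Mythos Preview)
Your approach is genuinely different from the paper's. The paper proceeds directly and globally: by Proposition~\ref{P:Eisenbud:11.6-b}~ii) there is an even non-zerodivisor $v\in M$; injectivity of $\phi$ follows because $\phi_t(v)=tv\neq 0$ for $t\neq 0$; for surjectivity, given $\varphi\in\homm_R(M,N)$ one sets $w=\varphi(v)$ and claims $\varphi=\phi_{w/v}$. This last equality is checked by a localization argument showing that any two morphisms $M\to K(R)$ agreeing at $v$ are equal (using that the image of $v$ in $M_\p\simeq R_\p$ is a non-zerodivisor). So the paper uses localization only for a uniqueness lemma, whereas you localize the entire statement.

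Your route can be made to work, but the step you flag as the ``main obstacle'' is a genuine gap, not just a formality. You need $(M^{-1})_\p=(R_\p:M_\p)$ inside $K(R)_\p$, i.e.\ that localization commutes with forming the conductor $(-)^{-1}$. The paper establishes exactly this identity, but only \emph{later}, in the proof of Proposition~\ref{Inv:is:local}, so invoking it here would risk circularity; and your proposed workaround via the evaluation isomorphism $\mu$ of Proposition~\ref{prop:Eisenbud11.6-a} gives information about $M^\vee$, not directly about $M^{-1}$ as a subsupermodule of $K(R)$ (the identification $M^\vee\simeq M^{-1}$ is Corollary~\ref{Corollary:3.9}, which depends on the very proposition you are proving). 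The honest fix is to prove $S^{-1}(R:M)=(S^{-1}R:S^{-1}M)$ directly from finite generation of $M$: one inclusion is immediate, and for the other you clear denominators using a common $s\in S$ for the finitely many generators $m_1,\dots,m_n$ of $M$. Once you have this, your local computation goes through. The paper's argument sidesteps all of this by never needing to localize $M^{-1}$ at all.
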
 

\begin{proof}     By Proposition \ref{P:Eisenbud:11.6-b} ii), there exists $v\in M$ such that $v$ is an even non-zerodivisor of $R$. Then, for any nonzero homogeneous element $t\in M^{-1}N$, we have $vt\neq 0$. Hence, $t$ induces a nonzero element in $\homm_R(M, N)$. Hence, the morphism $\phi:M^{-1}N\to\homm_R(M, N)$ is injective. It remains to show that $\phi$ is surjective. For this, consider any $\varphi\in\homm_R(M, N)$ and let $w\in N$ be such that $\varphi(v)=w$. We claim that $\varphi=\phi_{w/v}$, that is, $\varphi$ is the image of $w/v$ under $\phi$. It is clear that $\varphi(v)=\phi_{w/v}(v)$, so the proof of the proposition follows from the following more general claim: If two morphisms $\alpha$ and $\beta$ from $M$ to $K(R)$, are such that $\alpha(v)=\beta(v)$, then $\alpha=\beta$. Indeed, it suffices to show that $\alpha_\p=\beta_\p$ for every prime ideal $\p$ of $R$. Note that since $v$ is a non-zerodivisor, the annihilator of $v$ must be zero, so the image of $v$ in $M_\p\simeq R_\p$ is a non-zerodivisor. In this case, $\alpha_\p(v)=\beta_\p(v)$. Thus, we find that $v\alpha_\p(1)=\alpha_\p(v)=\beta_\p(v)=v\beta_\p(1)$, that is to say, $\alpha_\p(1)=\beta_\p(1)$, so  $\alpha_\p=\beta_\p$. This completes the proof of the claim. The proposition follows from the claim. 
\end{proof}

\begin{corollary}\label{Corollary:3.9}  If $M$ is an invertible $R$-subsupermodule of $K(R)$,  then $M^\vee\simeq M^{-1}$.
\end{corollary}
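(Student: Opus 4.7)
The plan is to obtain Corollary \ref{Corollary:3.9} as an immediate specialization of Proposition \ref{P:Eisenbud:11.6-c}, taking $N=R$. For this to apply, I must first justify that $R$ itself qualifies as an invertible $R$-subsupermodule of $K(R)$.

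First I would verify that $R$ is invertible in the sense of the definition: $R$ is finitely generated over itself (by $1$), and for every prime ideal $\p$ of $R$ we have $R_\p \simeq R_\p$, i.e.\ $R$ is locally free of rank $1\mid 0$. Next I would identify $R$ with an $R$-subsupermodule of $K(R)$. Recall that $K(R)=S^{-1}R$, where $S$ is the set of non-zerodivisors in $R\ev$. The kernel of the localization morphism $R\to K(R)$ consists of elements annihilated by some $s\in S$; since $S$ consists of non-zerodivisors, this kernel is zero, so $R$ embeds as an $R$-subsupermodule of $K(R)$.

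Having secured both hypotheses, I would simply invoke Proposition \ref{P:Eisenbud:11.6-c} with this choice of $N$, yielding a natural isomorphism of $R$-supermodules
\[
M^{-1}R \xlongrightarrow{\sim} \homm_R(M, R) = M^\vee.
\]
To finish, I would note that $M^{-1}R = M^{-1}$ as $R$-subsupermodules of $K(R)$: the product on the left is defined as in Remark \ref{rem:prodK}, and since $1 \in R$, every element of $M^{-1}$ lies in $M^{-1}R$, while every finite sum $\sum_i t_i r_i$ with $t_i \in M^{-1}$ and $r_i \in R$ still satisfies $(\sum_i t_i r_i)M \subseteq R$, hence belongs to $M^{-1}$.

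There is essentially no obstacle here; the corollary is a one-line consequence of the previous proposition once the roles of $M$ and $N$ are correctly set. The only thing worth being careful about is the (routine) verification that $R$, viewed inside $K(R)$ via the localization map, does satisfy the hypotheses on the supermodule playing the role of $N$ in Proposition \ref{P:Eisenbud:11.6-c}.
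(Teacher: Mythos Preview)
Your proposal is correct and follows essentially the same route as the paper: apply Proposition~\ref{P:Eisenbud:11.6-c} with $N=R$ to obtain $M^{-1}R\simeq\homm_R(M,R)=M^\vee$, then observe $M^{-1}R=M^{-1}$. The paper's proof is terser (it does not spell out that $R$ is invertible or that $R\hookrightarrow K(R)$, and it adds a passing reference to Proposition~\ref{prop:Eisenbud11.6-a} that is not strictly needed), but the argument is the same.
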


\begin{proof}
    Note that $M^{-1}R=M^{-1}$, because $M^{-1}$ is an $R$-supermodule. Since $M$ is invertible, $M^\vee\otimes_R M\simeq R$, by Proposition \ref{prop:Eisenbud11.6-a}, and we find that $M^\vee \simeq M^{-1}R=M^{-1}$, by Proposition \ref{P:Eisenbud:11.6-c} with $N=R$.
\end{proof}

\begin{proposition}\label{P:Eisenbud:11.6-d} Let  $M$  be an $R$-subsupermodule of  $K(R)$. Then, $M^{-1}M=R$ if and only if $M$ is invertible. 
\end{proposition}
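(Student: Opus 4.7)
The plan is to prove the two implications separately. The reverse direction follows almost immediately from results already established in this section, while the forward direction requires careful bookkeeping of the $\Z_2$-grading.

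For the implication ($M$ invertible) $\Rightarrow$ ($M^{-1}M=R$), I would invoke Corollary \ref{Corollary:3.9} to identify $M^\vee$ with $M^{-1}$, under which the natural evaluation map $\mu:M^\vee\otimes_R M\to R$ is transported to the multiplication map $M^{-1}\otimes_R M\to K(R)$, $t\otimes m\mapsto tm$. By Proposition \ref{prop:Eisenbud11.6-a}, $\mu$ is an isomorphism, so its image, which is by definition the $R$-subsupermodule $M^{-1}M\subseteq K(R)$, must coincide with $R$.

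For the converse, assume $M^{-1}M=R$. Since $1\in R$ is even, I can write $1=\sum_{i=1}^{n}t_im_i$ with $t_i\in M^{-1}$ and $m_i\in M$ homogeneous and $|t_i|=|m_i|$ (terms with $|t_i|\neq|m_i|$ contribute only to $R\od$, so they can be discarded). To see that $M$ is finitely generated, given a homogeneous $m\in M$, supercommutativity yields
$$m=m\cdot 1=\sum_i(\pm)\,(t_im)\,m_i,$$
and each $t_im\in M^{-1}M=R$, exhibiting $m$ as an $R$-linear combination of $m_1,\ldots,m_n$.

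For local freeness of rank $1\mid 0$ at an arbitrary prime $\p$, I would localize the identity $1=\sum t_im_i$ to $R_\p$. Since $R_\p$ is local, some summand $u:=(t_jm_j)/1$ must be a unit in $R_\p$. The critical subtlety is verifying $|m_j|=0$: if instead $m_j$ were odd, then $m_j^2=0$ in $K(R)$ (because $2$ is a non-zerodivisor), giving $u\cdot(m_j/1)=(t_jm_j^2)/1=0$ in $K(R)_\p$; since $u$ is a unit, this forces $m_j/1=0$, and then $u=(t_j/1)(m_j/1)=0$, a contradiction. Once $m_j$ is known to be even, I define $\phi:R_\p\to M_\p$ by $r\mapsto r(m_j/1)$, which is an even morphism. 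Injectivity follows by multiplying by $t_j/1$ and using that $u$ is a unit; surjectivity uses the identity $m=u^{-1}(t_jm_j)m=u^{-1}(t_jm)m_j$ (valid since $|m_j|=0$) together with $t_jm\in R$, showing every element of $M_\p$ lies in $R_\p\cdot(m_j/1)$. Hence $M_\p\simeq R_\p$ for every prime $\p$, so $M$ is invertible.

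The main obstacle is the parity bookkeeping in the local step. Unlike in the classical commutative case, one must ensure that the chosen local generator $m_j$ is even; otherwise $M_\p$ would only be isomorphic to $\Pi R_\p$, which does not meet the definition of invertibility. The squaring-to-zero property of odd elements is precisely what rules out the odd-odd pair and forces the unit summand to come from an even-even pair.
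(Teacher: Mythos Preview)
Your proof is correct and follows essentially the same approach as the paper: the backward implication via Corollary~\ref{Corollary:3.9} and the surjectivity of $\mu$, and the forward implication by writing $1=\sum t_im_i$, passing to the local case, and exhibiting a single even generator of $M_\p$. Your treatment is in fact slightly more explicit than the paper's---the paper omits the finite-generation check and simply asserts ``we may assume that $a_i$ is even,'' whereas you supply the $m_j^2=0$ argument that forces the unit summand to come from an even--even pair.
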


\begin{proof}  The ``only if'' part: Suppose that $M\subseteq K(R)$ is an $R$-supermodule with $M^{-1}M=R$. First, let us show that the conclusion holds when $R$ is local with maximal ideal $\m$. Since $M^{-1}M=R$, we can find finitely many $a_i$ in $h(M^{-1})$ and $x_i$ in $h(M)$ such that $1=a_1x_1+\cdots+a_rx_r$. If for all $i$, $a_ix_i\in\m$, then $1\in\m$, which is impossible. Then, there exists a homogeneous element $a_i\in M^{-1}$ such that $a_iM\not\subseteq\m$. Consequently, $a_iM=R$, which implies that $a_i$ is a non-zerodivisor of $R$. We may assume that $a_i$ is even. Then, multiplication by $a_i$ defines an isomorphism between $M$ and $R$, which proves the claim. In the general case, if $R$ is not local, we apply the previous reasoning to $R_{\p}$, with $\p$ running through the set of prime ideals of $R$.

The ``if'' part: Note that $M^{-1}M$ is a superideal of $R$. We must show that $1\in M^{-1}M$. For this, consider $\varphi_i\in M^\vee$ and $a_i\in M$ ($i=1, \ldots, n$) such that $1=\varphi_1(a_1)+\cdots+\varphi_n(a_n)$. If $\psi_i$ is the element of $M^{-1}$ corresponding to $\varphi_i$, through the isomorphism $M^{-1}\simeq M^\vee$, then $1=\psi_1a_1+\cdots+\psi_na_n$. Thus, $M^{-1}M=R$, as claimed. 
\end{proof}

\begin{proposition}\label{LEMA:3.11}
    Let $R$ be a local Noetherian superring and $M$ a fractional superideal of $R$. Then, $M$ is invertible if and only if, $M$ is principal generated by some unit $x\in K(R)^{*}$.
\end{proposition}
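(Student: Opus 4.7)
The plan is to use Proposition~\ref{P:Eisenbud:11.6-d} as the bridge between the two sides, since it characterizes invertibility of a subsupermodule of $K(R)$ by the identity $M^{-1}M=R$. Throughout I will repeatedly exploit the fundamental \emph{parity fact}: in a supercommutative superring in which $2$ is a non-zerodivisor, every unit is even, because any odd $y$ satisfies $y^{2}=-y^{2}$, hence $y^{2}=0$, so $y$ is nilpotent and cannot be a unit.

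For the $(\Leftarrow)$ direction, I would assume $M=Rx$ with $x\in K(R)^{*}$. Since $M$ is a subsupermodule of $K(R)$, the generator $x$ is homogeneous, and being a unit it must then be even. A direct verification gives $M^{-1}=Rx^{-1}$: if $y\in h(M^{-1})$, then $yx\in R$, so $y=(yx)x^{-1}\in Rx^{-1}$; the reverse inclusion is immediate since $x$ is even. Hence $M^{-1}M=R$, and Proposition~\ref{P:Eisenbud:11.6-d} yields invertibility (finite generation of $M$ is automatic from $M=Rx$).

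For the $(\Rightarrow)$ direction, I would invoke Proposition~\ref{P:Eisenbud:11.6-d} to obtain $M^{-1}M=R$, and write $1=\sum_{i=1}^{r}a_{i}x_{i}$ with $a_{i}\in h(M^{-1})$, $x_{i}\in h(M)$, and $|a_{i}|=|x_{i}|$ after grouping summands by parity. Since $(R,\m)$ is local, $R-\m=R^{*}$, so not every $a_{i}x_{i}$ can lie in $\m$; pick an index $j$ with $a_{j}x_{j}=u\in R^{*}$. The decisive step is to argue that $a_{j}$ and $x_{j}$ are individually even: they share parity because $u$ is, and if both were odd, supercommutativity would force
\[
u^{2}=a_{j}x_{j}a_{j}x_{j}=-a_{j}^{2}x_{j}^{2}=0,
\]
contradicting $u\in R^{*}$. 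Being both even they commute, so $a_{j}(x_{j}u^{-1})=1$ places $x_{j}\in K(R)^{*}$. It only remains to check $M=Rx_{j}$: the inclusion $Rx_{j}\subseteq M$ is immediate, and for any homogeneous $m\in M$ one has $a_{j}m\in R$, whence $m=x_{j}\bigl(u^{-1}a_{j}m\bigr)\in Rx_{j}$.

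The chief obstacle, compared to the purely commutative argument, is parity bookkeeping: the generator extracted from the relation $1=\sum a_{i}x_{i}$ has to be shown to be even, both in order to land in $K(R)^{*}$ and to commute with arbitrary $r\in R$ when expressing members of $M$ as $x_{j}r$. Once that is handled through the nilpotence of odd elements together with the local hypothesis, the remainder of the proof collapses onto the familiar commutative pattern.
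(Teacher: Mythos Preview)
Your proof is correct and follows the same route as the paper's: both directions pass through Proposition~\ref{P:Eisenbud:11.6-d}, and for the forward implication you extract a term $a_jx_j\in R^{*}$ from $1=\sum a_ix_i$ exactly as the paper does; your explicit parity argument (showing $a_j,x_j$ must both be even because otherwise $(a_jx_j)^2=0$) is a detail the paper leaves implicit. One small slip in the backward direction: the claim that the generator $x$ must be homogeneous does not follow merely from $M$ being a subsupermodule (e.g.\ $R=\Bbbk[\theta]$, $M=R=R(1+\theta)$), but this is harmless---since $x$ is a unit its even part $x_{\overline 0}$ is again a unit and one checks $M=Rx_{\overline 0}$, so you may simply replace $x$ by $x_{\overline 0}$ at the outset.
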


\begin{proof}       
   The ``only if'' part:  Since $R$ is local, $R-R^*$ is an ideal of $R$. Assuming that $M$ is invertible, there exist elements $a_i\in h(M^{-1})$ and $x_i\in h(M)$   such that $1=a_1x_1+\cdots+a_sx_s$. Thus, for some $j\in\{1, 2, \ldots, s\}$, we find that $a_jx_j\not\in R-R^*$. Note that every $x\in M$ can be written as $x=[(xa_j)(a_jx_j)^{-1}]x_j$, that is, $M=x_jR$ with $x_j\in K(R)^*$. 

    The ``if'' part: If $M=xR$ with $x\in K(R)^{*}$, then $M^{-1}=\frac{1}{x}R$ and $M^{-1}M=R$. Thus, $M$ is invertible by Proposition \ref{P:Eisenbud:11.6-d}, so the proposition follows.
\end{proof}

\begin{proposition}\label{Inv:is:local} Let $M$ be a fractional superideal of $R$. The following statements are equivalent.

\begin{itemize}
    \item[i)] $M$ is an  invertible  $R$-supermodule. 
    \item[ii)] $M$ is finitely generated and, for any prime ideal $\p$ of $R$, $M_\p$ is an  invertible  $R_{\p}$-supermodule.
    \item[iii)] $M$ is finitely generated and, for any maximal ideal $\m$ of $R$, $M_\m$ is an invertible  $R_\m$-supermodule.
\end{itemize} 
\end{proposition}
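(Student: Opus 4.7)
The plan is to prove the cycle of implications i) $\Rightarrow$ ii) $\Rightarrow$ iii) $\Rightarrow$ i), exploiting in both nontrivial directions the transitivity of localization, specifically Proposition \ref{P:2.2:A} ii) and its supermodule analog ($(M_\p)_{\p' R_\p}\simeq M_{\p'}$ when $\p'\subseteq \p$), together with the fact that invertibility is a purely local condition by its very definition.

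For i) $\Rightarrow$ ii), assume $M$ is invertible over $R$ and fix a prime $\p$. Finite generation of $M_\p$ is immediate from finite generation of $M$. To check that $M_\p$ is invertible over $R_\p$, I pick any prime $\q$ of $R_\p$, write it as $\q=\p' R_\p$ for a unique prime $\p'\subseteq \p$ of $R$, and invoke Proposition \ref{P:2.2:A} ii) to get $(R_\p)_\q\simeq R_{\p'}$ and $(M_\p)_\q\simeq M_{\p'}$; since $M_{\p'}\simeq R_{\p'}$ by invertibility of $M$, we conclude $(M_\p)_\q\simeq (R_\p)_\q$. The implication ii) $\Rightarrow$ iii) is immediate since every maximal ideal is prime.

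For iii) $\Rightarrow$ i), finite generation is given, so I only need to show $M_\p\simeq R_\p$ for every prime $\p$ of $R$. The key observation is that for the local superring $R_\m$, the unique maximal ideal is $\m R_\m$, and the invertibility hypothesis applied at this single maximal ideal immediately yields an isomorphism $M_\m\simeq R_\m$ of $R_\m$-supermodules (note that $(R_\m)_{\m R_\m}=R_\m$ and $(M_\m)_{\m R_\m}=M_\m$, since we are already localized at the unique maximal ideal). Now, given an arbitrary prime $\p$ of $R$, I choose a maximal ideal $\m$ with $\p\subseteq\m$, and localize the isomorphism $M_\m\simeq R_\m$ at the prime $\p R_\m$ of $R_\m$; by Proposition \ref{P:2.2:A} ii) (and its supermodule version) I recover $M_\p\simeq R_\p$, which is exactly what is needed.

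The only step requiring care is the supermodule analog of Proposition \ref{P:2.2:A} ii), i.e.\ the identification $(M_\p)_{\p' R_\p}\simeq M_{\p'}$ and $(M_\m)_{\p R_\m}\simeq M_\p$. These are obtained in the same way as the ring-theoretic statement, by checking universal properties of iterated localizations on supermodules (as noted in the proof of Proposition \ref{P:Eisenbud:11.6-b} i), citing \cite[Proposition 5.1.16]{westrathesis}). Beyond this bookkeeping, no substantial obstacle is anticipated: the proposition is essentially the assertion that invertibility, being defined stalk-by-stalk, descends from the maximal spectrum to the whole prime spectrum once finite generation is in hand.
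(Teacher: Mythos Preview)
Your argument is correct, but it follows a different route from the paper's proof. The paper exploits the characterization $M^{-1}M=R$ from Proposition~\ref{P:Eisenbud:11.6-d} rather than the definition of invertibility directly. For i)~$\Rightarrow$~ii), the paper shows that the colon-supermodule commutes with localization, $S^{-1}(R:M)=(S^{-1}R:S^{-1}M)$, and then localizes the equation $M\cdot M^{-1}=R$ to obtain $M_\p\cdot M_\p^{-1}=R_\p$. For iii)~$\Rightarrow$~i), the paper sets $N:=MM^{-1}$, observes that $N_\m=M_\m M_\m^{-1}=R_\m$ for every maximal $\m$, so $N$ is a superideal not contained in any maximal ideal, hence $N=R$; Proposition~\ref{P:Eisenbud:11.6-d} then gives invertibility of $M$.

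Your approach, by contrast, stays entirely at the level of the definition (finitely generated plus $M_\p\simeq R_\p$ for all $\p$) and uses only transitivity of localization. This is arguably more elementary, since it avoids both Proposition~\ref{P:Eisenbud:11.6-d} and the commutation of $(R:-)$ with localization. The paper's route, on the other hand, makes explicit the useful auxiliary fact that $M^{-1}$ localizes well and reinforces the link between invertibility and the $MM^{-1}=R$ criterion, which is the form most often used downstream.
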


\begin{proof}  i) $\Rightarrow$ ii): It is easy to see that for any multiplicative subset $S$ of $R\ev$, we have $S^{-1}(R:M)=(S^{-1}R:S^{-1}M)$ and further, $M^{-1}=(R:M)$ (cf. \cite[Lemma 25.4 and Proposition 25.8]{altman2013term}). Now, let $\p$ be any prime ideal of $R$. Then,  $R_\p=(M\cdot (R:M))_\p\simeq  M_\p(R:M)_\p \simeq M_\p(R_\p:M_\p)\simeq M_\p M_\p^{-1},$ that is, $M_\p$ is invertible.

ii) $\Rightarrow$ iii): Obvious. 

iii) $\Rightarrow$ i): Suppose that $M_\m$ is invertible for all maximal ideal $\m$ of $R$ and consider $N:=MM^{-1}$. Notice that $N$ is a superideal of $R$ such that $N_\m=M_\m M^{-1}_\m=R_\m$, for any maximal ideal $\m$. In particular, we have $N\not\subseteq\m$ and, consequently, $N=R$. Thus, by Proposition \ref{P:Eisenbud:11.6-d}, $M$ is an invertible $R$-supermodule. 
\end{proof}

\begin{proposition}
     Let $R$ be a Noetherian superring and $M$ a fractional superideal of $R$. Then, $M$ is invertible if and only if $M$ is finitely generated and locally of the form $xR_\p$, for some $x\in K(R_\p)^*$.
\end{proposition}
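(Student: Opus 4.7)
The plan is to chain together two earlier results: Proposition \ref{Inv:is:local}, which localizes the property of being invertible to the prime (equivalently, maximal) ideals of $R$, and Proposition \ref{LEMA:3.11}, which characterizes invertible fractional superideals over a local Noetherian superring as principal ideals generated by units of $K(R_\p)^*$. Since $R_\p$ is local Noetherian for every prime $\p$ of $R$, these two results mesh almost verbatim.

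More concretely, I would first observe that if $M$ is a fractional superideal of $R$ with $dM\subseteq R$ for a homogeneous non-zerodivisor $d\in R$, then $M_\p$ is naturally a fractional superideal of $R_\p$ for every prime $\p$ of $R$. This uses Propositions \ref{P:2.2:A} and \ref{P:2.2:B}: localization commutes with inclusion in $K(R)$, so $M_\p$ embeds into (a localization of) $K(R)$ and ultimately into $K(R_\p)$, and the image of $d$ still scales $M_\p$ into $R_\p$. (If the image of $d$ in $R_\p$ happens to be a zerodivisor, one replaces it by an even non-zerodivisor provided by Proposition \ref{P:Eisenbud:11.6-b} ii) in the invertible case.) With this identification in place, we can legitimately speak of $M_\p$ being invertible over $R_\p$ in the sense of Definition of fractional superideal.

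Now the equivalence falls out immediately. By Proposition \ref{Inv:is:local}, $M$ is an invertible $R$-supermodule if and only if $M$ is finitely generated and $M_\p$ is an invertible $R_\p$-supermodule for every prime ideal $\p$ of $R$. Applying Proposition \ref{LEMA:3.11} to the local Noetherian superring $R_\p$ and the fractional superideal $M_\p$, we have that $M_\p$ is invertible if and only if $M_\p = x R_\p$ for some $x \in K(R_\p)^*$. Concatenating these two equivalences yields the claim.

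The only substantive step is the identification of $M_\p$ as an honest fractional superideal of $R_\p$, which is where one must juggle the various localizations of $K(R)$; everything else is a direct application of Propositions \ref{Inv:is:local} and \ref{LEMA:3.11}, so this proposition is essentially a reformulation of those two results combined.
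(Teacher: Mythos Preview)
Your proposal is correct and follows essentially the same route as the paper: both arguments combine Proposition~\ref{Inv:is:local} (invertibility is a local condition) with Proposition~\ref{LEMA:3.11} (over a local Noetherian superring, invertible fractional superideals are exactly those of the form $xR_\p$ with $x\in K(R_\p)^*$). The paper's proof is terser and does not dwell on why $M_\p$ is a fractional superideal of $R_\p$; your extra care there is harmless, though note that since $M$ is finitely generated in both directions and $R_\p$ is Noetherian, Proposition~\ref{P:3.2:ALTMAN} iv)$\Rightarrow$iii) already handles this without needing to track the image of $d$.
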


\begin{proof}
   The ``only if'' part: Suppose that $M$ is invertible, then $M^{-1}M=R$. Thus, $M_\p$ is invertible for every prime ideal $\p$. Since $R_\p$ is a local superdomain, $M_\p$ has the form $xR_\p$, for some $x$ in $K(R_\p)^*$. 

    The ``if'' part: Suppose that for all prime $\p$ of $R$, $M_\p=xR_\p$, with $x\in K(R_\p)^*$. Then, $M_\p^{-1}M_\p=R_\p$. That is, $M_\p$ is invertible for all $\p$. Since invertibility is local and $M$ is finitely generated, $M$ is invertible. 
\end{proof}

Observe that the conclusion of the previous proposition remains valid for maximal ideals.  
\medskip

Having covered the necessary invertible supermodules properties, we will now consider the Picard group of a superring in the next subsection. 

\subsection{Picard group and Cartier divisors}\label{Sub:sec:3.2}  


Let $R$ be a Noetherian superring and consider $M$ and $N$ invertible $R$-supermodules. Denote by $[M]$ the isomorphism class of $M$. We define the \textit{even Picard supergroup $\spic_+(R)$ of $R$} to be the group of isomorphisms classes of invertible $R$-supermodules under tensor product, that is,  $[M]\cdot[N]:=[M\otimes_R N]$. Then, the binary operation  ``$\cdot$'' endows $\spic_+(R)$ with an Abelian group structure,  where the identity is the class of $R$ and the inverse of the class of $M$ is the class of $M^{\vee}$.  Similarly, the set $\sC(R)$ of invertible $R$-subsupermodules of $K(R)$ can be endow with a group structure, in which the product and inverse are defined as in Remark \ref{rem:prodK} and Proposition  \ref{P:Eisenbud:11.6-d}, respectively. The group $\sC(R)$ is called the \textit{Cartier group of} $R$. 

The following proposition will be useful in order to describe the principal Cartier divisors on a Noetherian superring. 

\begin{proposition}\label{PROPO:3.13}
    Let $R$ be a Noetherian superring and consider $M$ and $N$ to be invertible $R$-subsupermodules of $K(R)$ such that $M\simeq N$. Then, there is some $u\in K(R)\ev^*$ such that $M=uN$. 
\end{proposition}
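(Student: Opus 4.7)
The plan is to realize the abstract isomorphism $M\simeq N$ concretely as multiplication by an element $u\in K(R)$, and then to check that this $u$ is even and a unit in $K(R)$. The main tool is Proposition \ref{P:Eisenbud:11.6-c}, which identifies $\homm_R(N,M)$ with the fractional piece $N^{-1}M\subseteq K(R)$ via $t\mapsto(n\mapsto tn)$.

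First I would fix an isomorphism $\psi:N\to M$ of $R$-supermodules. Since morphisms are by definition parity-preserving, $\psi\in\hom_R(N,M)=\homm_R(N,M)\ev$. Applying Proposition \ref{P:Eisenbud:11.6-c} with the roles of $M$ and $N$ swapped yields a unique even element $u\in(N^{-1}M)\ev\subseteq K(R)\ev$ satisfying $\psi(n)=un$ for every $n\in N$. Surjectivity of $\psi$ then gives $M=\psi(N)=uN$ at once.

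It remains to show that $u\in K(R)\ev^{\ast}$. Running the same construction for $\psi^{-1}:M\to N$ produces an even $v\in(M^{-1}N)\ev\subseteq K(R)\ev$ with $\psi^{-1}(m)=vm$. By Proposition \ref{P:Eisenbud:11.6-b} ii), the invertible fractional superideal $N$ contains an even non-zerodivisor $n_0\in R$. Evaluating $\psi^{-1}\circ\psi=\id_N$ at $n_0$ yields $(vu-1)n_0=0$ in $K(R)$. Since $n_0$ is a non-zerodivisor of $R$ and $K(R)$ is the localization of $R$ at its non-zerodivisors, $n_0$ remains a non-zerodivisor in $K(R)$; hence $vu=1$, and by the symmetric argument $uv=1$. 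Therefore $u\in K(R)\ev^{\ast}$ with inverse $v$, and $M=uN$ is the desired identity.

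The only step that requires any real input is the invocation of Proposition \ref{P:Eisenbud:11.6-c} to turn an abstract homomorphism into a concrete scalar multiplication; once this representation is in place, the existence of an even non-zerodivisor in $N$ (guaranteed precisely by the invertibility of $N$) lets the candidate inverse $v$ do its job, and no further difficulty remains.
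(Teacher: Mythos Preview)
Your argument is correct and follows essentially the same route as the paper: both proofs invoke Proposition \ref{P:Eisenbud:11.6-c} to realize the given isomorphism as multiplication by an element $u\in K(R)$, observe that $u$ must be even because parity-preserving morphisms land in $\homm_R(N,M)\ev$, and then apply the same reasoning to the inverse isomorphism to produce $v$ with $uv=1$. Your version is in fact slightly more explicit in the final cancellation step, using the even non-zerodivisor supplied by Proposition \ref{P:Eisenbud:11.6-b} ii) to pass from $(vu-1)n_0=0$ to $vu=1$, whereas the paper simply asserts this from $\phi\circ\phi^{-1}=\id$.
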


\begin{proof}  Let $\phi:M\to N$ be an isomorphism. By Proposition \ref{P:Eisenbud:11.6-c}, $\homm_R(M, N)\simeq M^{-1}N$, so we may identify the isomorphism $\phi$ with the multiplication for some element $u\in K(R)$. Further, since $\phi$ preserves the parity and from the equality $\homm_R(M, N)\ev=\hom_R(M, N)$, we find that $u$ is even. A similar reasoning on $\phi^{-1}:N\to M$ gives us a $v\in K(R)$ such that $uv=1$, because $\phi\circ\phi^{-1}$ is the identity map on $N$.  Then, $u$ is a unit and $M=uN$, as claimed. 
\end{proof}

Let $R$ be a Noetherian superring and consider the group homomorphism  $\psi:\sC(R)\to\spic_+(R)$ that assigns to an invertible $R$-subsupermodule of $K(R)$  its isomorphism class. Note that, by Proposition \ref{P:Eisenbud:11.6-b}, $\psi$ is surjective. Let us  describe the subgroup $\ker(\psi)$. First, note that if $u\in K(R)\ev^*$, then $uR \simeq R$; that is,  $uR$ is invertible and further  $uR\in\ker(\psi)$. Conversely, let $M\in\ker(\psi)$. Then, $M\simeq R$ and by Proposition \ref{PROPO:3.13},  $M=uR$ for some $u\in K(R)\ev^*$. Therefore, $\ker(\psi)=\{\,uR\mid u\in K(R)^*\ev\,\}.$ Every element $M\in \ker(\psi)$ is called a \textit{principal Cartier divisor on} $R$ and the group $\ker(\psi)$ is called the \textit{group of principal Cartier divisors on} $R$.  Observe that, as in the commutative case (see e.g., \cite[\S25.22]{altman2013term} or \cite[Corollary 11.7]{eisenbud}), we have the isomorphism   $\ker(\psi)\simeq K(R)\ev^*.$ The quotient group  $\sC(R)/K(R)\ev^*$ is called the \emph{ideal class group of $R$}. Thus, the ideal class group is canonically isomorphic to the even Picard supergroup: $\spic_+(R)\simeq\sC(R)/K(R)\ev^*.$
 
\begin{remark}
    It is worth noting that, stemming from the reasoning in the proof of Proposition \ref{P:Eisenbud:11.6-d}, if $R$ is a local superring, then any  invertible fractional superideal of $R$ is isomorphic to $R$. Consequently, in this case, both $\sC(R)$ and $\spic_+(R)$ are  trivial.
\end{remark}

Let $R$ and $R'$ be Noetherian superrings, $\varphi:R\longrightarrow R'$ a morphism and $M$ an invertible $R$-supermodule. By extension of scalars, $M\otimes_R R'$ admits an $R'$-supermodule structure, such that it is a  finitely generated $R'$-supermodule. Now, let $\q$ be a  prime ideal of $R'$. We obtain $(M\otimes_R R')_{\q}\simeq R'_\q$, i.e., $M\otimes_R R'$ is an invertible $R'$-supermodule. In particular, the map $\varphi_{\ast}:\spic_+(R)\to\spic_+(R')$ which associates the class of $M$ to the class of $M\otimes_R R'$ is a well-defined  group homomorphism.   

If we denote by $\srings$ the category of Noetherian superrings and by $\mathsf{aGrps}$ the category of Abelian groups, then we have a covariant functor 

\begin{eqnarray*}
\spic_+:\srings & \to & {\mathsf{aGrps}} \\ \nonumber
       R &\mapsto & \spic_+(R) \\ \nonumber
       (\varphi:R\to R') & \mapsto & (\varphi_{\ast}:\spic_+(R)\to\spic_+(R')) \nonumber
\end{eqnarray*}

\noindent Indeed, for any Noetherian superring $R$, $(\id_{R})_\ast=\id_{\spic_+(R)}$. Moreover, if $R, R', R''$ are Noetherian superrings and  $\varphi:R\to R'$ and $\psi:R'\to R''$ are morphisms, then $(\psi\circ\varphi)_{\ast}=\psi_{\ast}\circ\varphi_{\ast}$. 

It easily follows the following proposition. 

\begin{proposition} Let $R$ be a Noetherian split superring. Then, the projection $\pi_R:R\to\overline{R}$ induces a surjective group homomorphism $\pi_{R\ast}:\spic_+(R)\to\spic_+(\overline{R})$ and $\spic_+(R)$ splits as \[
\spic_+(R) \simeq\spic_+(\overline{R})\oplus\ker((\pi_{R})_\ast).
\]
\end{proposition}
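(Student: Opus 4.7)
The plan is to exploit the splitness of $R$ to produce a section of $\pi_R$ in the category of superrings, and then transport the resulting splitting of short exact sequences through the functor $\spic_+$ constructed in the previous paragraphs.

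First I would recall from Remark \ref{rem:splitring} that, since $R$ is split, there is an isomorphism $R \cong \overline{R} \oplus \J_R$ realizing the short exact sequence
\[
0 \longrightarrow \J_R \longrightarrow R \stackrel{\pi_R}{\longrightarrow} \overline{R} \longrightarrow 0
\]
as a split exact sequence. In particular, this provides a morphism of superrings $\iota : \overline{R} \to R$ satisfying $\pi_R \circ \iota = \id_{\overline{R}}$. (The point here is that for a split superring the section can be taken as an actual superring homomorphism, not merely an $\overline{R}$-supermodule retraction; this is the content of the splitting alluded to in Remark \ref{rem:splitring}.)

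Next, I would apply the functor $\spic_+$ constructed just before the proposition. By functoriality, $(\pi_R \circ \iota)_\ast = (\pi_R)_\ast \circ \iota_\ast$, and since $(\id_{\overline{R}})_\ast = \id_{\spic_+(\overline{R})}$, we obtain
\[
(\pi_R)_\ast \circ \iota_\ast \;=\; \id_{\spic_+(\overline{R})}.
\]
This identity alone proves the first assertion: $(\pi_R)_\ast$ admits the right inverse $\iota_\ast$, hence is surjective, and $\iota_\ast$ is injective.

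Finally, to conclude the splitting, I would invoke the standard fact that for an abelian group homomorphism $f : A \to B$ admitting a section $s : B \to A$, the group $A$ decomposes as $A \cong \ker(f) \oplus \im(s)$ via $a \mapsto (a - s(f(a)), s(f(a)))$. Applied with $f = (\pi_R)_\ast$ and $s = \iota_\ast$, and using that $\iota_\ast$ is injective so that $\im(\iota_\ast) \cong \spic_+(\overline{R})$, this yields
\[
\spic_+(R) \;\cong\; \ker((\pi_R)_\ast) \oplus \spic_+(\overline{R}),
\]
as required. I do not anticipate a serious technical obstacle; the only subtle point is ensuring that the splitting coming from Remark \ref{rem:splitring} really produces a superring morphism $\overline{R} \to R$ (as opposed to a mere supermodule section), so that functoriality of $\spic_+$ can be applied to it.
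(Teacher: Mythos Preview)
Your proposal is correct and follows essentially the same approach as the paper: use the section $\iota:\overline{R}\to R$ guaranteed by the splitness of $R$ (the paper denotes it $\sigma$), apply the functor $\spic_+$ to obtain $(\pi_R)_\ast\circ\iota_\ast=\id_{\spic_+(\overline{R})}$, and conclude via the resulting split short exact sequence of abelian groups. The only difference is cosmetic: the paper phrases the final step as a split exact sequence of $\Z$-modules rather than writing out the explicit retraction $a\mapsto(a-s(f(a)),s(f(a)))$.
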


Note that if $R\od=0$, then $\spic_+(R)$ is simply the usual Picard group of $R$.

\begin{example}
     Let $R$ be an integral domain and $R'=R[X_1, \ldots, X_n\mid \theta_1, \ldots, \theta_m]$. Then, 


    \[
    \spic_+(R')\simeq\spic_+(R[X_1, \ldots, X_n])\oplus\ker((\pi_{R'})_*).
    \]


    It is known in commutative algebra that 

    
    \[
    \spic_+(R[X_1, \ldots, X_n])\simeq\spic_+(R)\oplus\bigoplus_{1\leq i\leq n+1}\mathbf{NPic}_i(R),
    \]

    
    \noindent where $\mathbf{NPic}_i(R)$ is the kernel of the map $\spic_+(R[X_1, \ldots, X_{i+1}])\to\spic_+(R[X_1, \ldots, X_i])$, induced by the evaluation map at $x_{i+1}=0$ (see e.g., \cite[\S2]{coyjim}). Thus, 


    \[
    \spic_+(R')\simeq\spic_+(R)\oplus\left(\bigoplus_{1\leq i\leq n+1}\mathbf{NPic}_i(R)\right)\oplus\ker((\pi_{R'})_*).
    \]
 
\end{example}

\begin{example}
    Let $R$ be a commutative ring and consider $R'=R[x, x^{-1}]$. It was proved by C. A. Weibel in  \cite{weibelPIC}, that $\spic_+(R')$ admits the decomposition 
    
    \[
    \spic_+(R')\simeq\spic_+(R)\oplus\mathbf{NPic}(R)\oplus\mathbf{NPic}(R)\oplus H^{1}(R),
    \]

\noindent where $H^1(R)$ is the étale cohomology group $H^1_{\text{ét}}(\spec(R), \Z)$ and $\mathbf{NPic}(R)$ is the kernel of $\spic_+(R[x])\to\spic_+(R)$ induced by the evaluation map $R[x]\to R, x\mapsto1$. Thus, for example, 

\begin{align*}
    \spic_+(R[x, x^{-1}][\theta_1, \ldots, \theta_n])&\simeq\spic_+(R[x, x^{-1}])\oplus\ker(\pi_*)\\
    &\simeq\spic_+(R)\oplus\mathbf{NPic}(R)\oplus\mathbf{NPic}(R)\oplus H^{1}(R)\oplus\ker(\pi_*),
\end{align*}

\noindent where $\pi_*$ is induced by the canonical map $R[x, x^{-1}][\theta_1, \ldots, \theta_n]\to R[x, x^{-1}]$.   
\end{example}
  

\section{Projective supermodules}\label{Sec:Projective:Supermodules}


Projective supermodules take center stage in this section, where we define them, cover their properties, and further demonstrate their connection to invertible supermodules. Given the in-depth treatment of this topic in \cite{morye2022notes} and \cite[Chapter 6]{westrathesis}, we restrict our focus here to the aspects crucial for understanding the remaining of this paper. 

\begin{definition}
An $R$-supermodule $M$ is \textit{projective} if the functor $\homm_R(M, \cdot):\mathrm{sMod}_R\to\mathrm{sMod}_R$ is exact.
\end{definition}

The following proposition follows similar to \cite[Proposition 1.35]{narkiewicz2004elementary}. 

\begin{proposition}[Dual basis theorem]\label{DualBasisTheorem} Let $R$ be a superring and $M$ an $R$-supermodule. Then $M$ is projective  if and only if   there exist collections $
\{a_i\mid i\in I\}\subseteq h(M)$ and $\{\phi_i\mid i\in I\}\subseteq h(\homm_R(M, R))$ such that every $m\in M$ has the form  

\[
m=\sum_{i\in I}\phi_{i}(m)a_i,
\]

\noindent with all but finitely many $\phi_i(m)$ are zero. 
\end{proposition}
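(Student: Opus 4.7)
My plan is to reduce both directions to the characterization of projectivity as ``direct summand of a free supermodule'' (Proposition \ref{Westra:Lemma:6.2.2} iv)), while carefully tracking parities so that the formula $m=\sum_{i}\phi_i(m)a_i$ makes super sense.

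For the $(\Rightarrow)$ direction, I would start from a free $R$-supermodule $F$ and morphisms $\iota:M\to F$, $\pi:F\to M$ with $\pi\circ\iota=\id_M$, coming from Proposition \ref{Westra:Lemma:6.2.2} iv) and Proposition \ref{prop:split}. Pick a homogeneous basis $\{e_i\}_{i\in I}$ of $F$ (which exists by Definition \ref{Def:loc.free}) and define coordinate functionals $e_i^{\ast}\in\homm_R(F,R)$ by $e_i^{\ast}(e_j)=\delta_{ij}$. Since $e_i^\ast(e_i)=1\in R\ev$, the parity of $e_i^{\ast}$ equals $|e_i|$. Setting $a_i:=\pi(e_i)$ and $\phi_i:=e_i^{\ast}\circ\iota\in\homm_R(M,R)$, both are homogeneous of parity $|e_i|$, so the coefficients and generators match up. For any $m\in M$, the expansion $\iota(m)=\sum_i e_i^\ast(\iota(m))\,e_i$ (a finite sum) together with $\pi\circ\iota=\id_M$ gives $m=\sum_i\phi_i(m)a_i$.

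For the $(\Leftarrow)$ direction, given families $\{a_i\}_{i\in I}\subseteq h(M)$ and $\{\phi_i\}_{i\in I}\subseteq h(\homm_R(M,R))$ with the dual basis property, I would construct a free $R$-supermodule $F:=\bigoplus_{i\in I}R\cdot e_i$ where each $e_i$ is declared to have parity $|a_i|$. Define $\pi:F\to M$ by $\pi(e_i):=a_i$ and $\iota:M\to F$ by $\iota(m):=\sum_i\phi_i(m)e_i$; the latter sum is finite by hypothesis, and is parity-preserving because the parity of the $i$-th summand is $|\phi_i|+|m|+|e_i|=|m|$ (since $|\phi_i|=|a_i|=|e_i|$, which I would first argue is forced by the dual basis equation applied separately to the even and odd components of $m$). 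The identity $\pi\circ\iota=\id_M$ follows directly from the hypothesis, so by Proposition \ref{prop:split} we get $F\simeq M\oplus\ker(\pi)$, and Proposition \ref{Westra:Lemma:6.2.2} iv) concludes that $M$ is projective.

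The only genuinely super ingredient is the parity-matching: one must verify that the index sets can be organized so that $|\phi_i|=|a_i|$ always, and that both $\iota$ and $\pi$ preserve parity. Once that bookkeeping is in place, the argument is the classical dual basis lemma. I expect this parity check to be the sole substantive obstacle; the rest is essentially a transcription of the commutative proof into the $\mathbb{Z}_2$-graded framework.
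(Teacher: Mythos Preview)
Your proposal is correct and follows essentially the same route as the paper: both directions are reduced to Proposition~\ref{Westra:Lemma:6.2.2}~iv) via the inclusion/projection pair coming from a free supermodule, with coordinate functionals playing the role of the $\phi_i$. Your treatment is in fact slightly more careful than the paper's on the parity bookkeeping; the one point to sharpen is that ``$|\phi_i|=|a_i|$ is forced'' should be read as ``the pairs with $|\phi_i|\neq|a_i|$ can be discarded'', since for homogeneous $m$ the contribution $\sum_{|\phi_i|\neq|a_i|}\phi_i(m)a_i$ lands in the wrong parity component and hence vanishes.
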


As in the commutative case, Proposition \ref{DualBasisTheorem} has an interesting application. Namely, it builds a connection between the notions of invertibility and projectivity on fractional superideals, as we find in the following two propositions, whose proofs follow arguments very close to their commutative counterpart. 

\begin{proposition}
Let $R$ be a superdomain and $M$ an invertible fractional superideal of $R$. Then, $M$ is projective. In particular, if $R$ is Noetherian, then any invertible $R$-supermodule is projective. 
\end{proposition}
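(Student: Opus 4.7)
The plan is to apply the Dual Basis Theorem (Proposition \ref{DualBasisTheorem}). Since $M$ is an invertible $R$-subsupermodule of $K(R)$, Proposition \ref{P:Eisenbud:11.6-d} supplies $M^{-1}M = R$, so $1 \in R$ can be expanded as a finite sum
\[
1 = \sum_{i=1}^n b_i a_i, \qquad b_i \in h(M^{-1}),\ a_i \in h(M).
\]
Regrouping if necessary, we may take each product $b_i a_i$ homogeneous of even degree, so that $|b_i| = |a_i|$.

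For each $i$, define $\phi_i \colon M \to R$ by multiplication by $b_i$; concretely, set $\phi_i(m) := m b_i$. This lies in $R$ because $b_i M \subseteq R$ by the definition of $M^{-1}$ (supercommutativity of $K(R)$ then gives $m b_i = (-1)^{|m||b_i|} b_i m \in R$), and $\phi_i$ is a homogeneous morphism of parity $|b_i|=|a_i|$, corresponding to $b_i$ under the isomorphism $M^{\vee} \simeq M^{-1}$ of Corollary \ref{Corollary:3.9}. The key computation is then
\[
\sum_{i=1}^n \phi_i(m)\, a_i \;=\; \sum_i (m b_i)\, a_i \;=\; \sum_i m\,(b_i a_i) \;=\; m \sum_i b_i a_i \;=\; m \cdot 1 \;=\; m,
\]
valid for every $m \in M$; here associativity in $K(R)$ is used, and one exploits that each scalar $b_i a_i$ is even and therefore commutes freely with $m$. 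The Dual Basis Theorem then asserts that $M$ is projective.

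For the second assertion, assume $R$ is Noetherian and let $M$ be any invertible $R$-supermodule. By Proposition \ref{P:Eisenbud:11.6-b}~i), $M$ is isomorphic to a fractional superideal $M'$ of $R$, which is itself invertible, and the first part yields the projectivity of $M'$. Since projectivity is an isomorphism invariant, $M$ is projective. The chief obstacle throughout is the sign bookkeeping inherent to the supercommutative setting; it is resolved cleanly by noting that $\sum b_i a_i = 1 \in R\ev$, so the even scalars $b_i a_i$ pass through $m$ without generating signs — this is precisely why placing $b_i$ on the right in the definition of $\phi_i$ is what makes the dual basis identity hold without a sign correction.
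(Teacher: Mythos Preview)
Your proof is correct and follows essentially the same approach as the paper's: both extract a dual basis from the relation $1=\sum b_i a_i\in M^{-1}M$ and invoke the Dual Basis Theorem (Proposition~\ref{DualBasisTheorem}), with the Noetherian case handled via Proposition~\ref{P:Eisenbud:11.6-b}~i). Your version is in fact more careful about the sign bookkeeping---placing $b_i$ on the right so that the even scalars $b_ia_i$ factor out cleanly---whereas the paper simply writes $\phi_i(a)=ax_i$ and $m=\sum a_i\phi_i(m)$; the parenthetical appeal to Corollary~\ref{Corollary:3.9} is unnecessary (and is stated there under a Noetherian hypothesis), but you already verify directly that $\phi_i\in\homm_R(M,R)$, so the argument stands without it.
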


\begin{proof} 
Let $M\neq\J_R$ be a nonzero fractional superideal of $R$. If $M$ is invertible, $MM^{-1}=R$, then we can find finitely many homogeneous $a_1, \ldots, a_n\in M$ and $x_1, \ldots, x_n\in M^{-1}$ such that $1=a_1x_1+\cdots+a_nx_n$. Consider the morphisms $\phi_i:M\to R, a\mapsto ax_i$. If $m\in M$, we find that $m=a_1\phi_1(m)+\cdots+a_n\phi_n(m)$, and, by Proposition \ref{DualBasisTheorem}, $M$ is projective. Suppose further that $R$ is Noetherian. Then, $M$ is finitely generated and, by Proposition \ref{P:Eisenbud:11.6-b} i),  there is some fractional superideal $N$ which is isomorphic to $M$.  The invertibility of $M$ implies that $N$ is also invertible. Moreover, by the first part of the proof, both $M$ and $N$ are projective. 
\end{proof}

\begin{proposition}\label{Prop:4.23} Let $R$ be a Noetherian superring and let  $M$ be a projective fractional superideal of $R$. Suppose that $M\ev$ is not contained in $D(K(R)\ev)$. Then, $M$ is invertible. 
\end{proposition}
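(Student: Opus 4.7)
The plan is to combine the Dual Basis Theorem (Proposition \ref{DualBasisTheorem}) with the non-zerodivisor hypothesis on $M\ev$ to exhibit $1\in M^{-1}M$ explicitly; invertibility then follows immediately from Proposition \ref{P:Eisenbud:11.6-d}.

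First I would apply Proposition \ref{DualBasisTheorem} to the projective supermodule $M$, obtaining families $\{a_i\}\subseteq h(M)$ and $\{\phi_i\}\subseteq h(\homm_R(M,R))$ with matching parities $|a_i|=|\phi_i|$, such that every $m\in M$ admits a finite expansion $m=\sum_i\phi_i(m)\,a_i$. The hypothesis furnishes an even element $v\in M\ev$ outside $D(K(R)\ev)$; read in the sense consistent with Proposition \ref{P:Eisenbud:11.6-b} ii), this means $v$ is a non-zerodivisor on $K(R)$ and hence, $K(R)$ being a total superring of fractions, an actual unit of $K(R)$. In particular $v^{-1}\in K(R)\ev$ is at my disposal.

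The main step is to recognize each $\phi_i$, restricted to $M$, as left multiplication by the scalar $x_i := v^{-1}\phi_i(v)\in K(R)$. Since $M$ is a fractional superideal I fix an even non-zerodivisor $d\in R$ with $dM\subseteq R$. Using that $v$ is even (so $mv=vm$ in $K(R)$ for every $m\in h(M)$), one has $(dm)v=(dv)m$ in $M$, and $R$-linearity of $\phi_i$ gives
\[
(dm)\,\phi_i(v) \;=\; \phi_i\!\bigl((dm)v\bigr) \;=\; \phi_i\!\bigl((dv)m\bigr) \;=\; (dv)\,\phi_i(m)
\]
in $R$. Cancelling the unit $d$ in $K(R)$ yields $m\,\phi_i(v)=v\,\phi_i(m)$, and therefore $x_i m = v^{-1}\phi_i(v)\,m = (-1)^{|\phi_i||m|}\phi_i(m)\in R$ for every $m\in h(M)$. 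Hence $x_i M\subseteq R$, so $x_i\in M^{-1}$.

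Finally, applying the dual-basis expansion to $v$ itself and factoring out the even unit $v^{-1}$,
\[
\sum_i x_i a_i \;=\; v^{-1}\sum_i \phi_i(v)\,a_i \;=\; v^{-1}v \;=\; 1,
\]
so $1\in M^{-1}M$ and therefore $M^{-1}M=R$; Proposition \ref{P:Eisenbud:11.6-d} completes the proof. The principal delicacy is reading the hypothesis so that $v$ is genuinely invertible in $K(R)$, which legitimizes the division by $v$ throughout; the evenness of $v$ then keeps all supercommutativity signs trivial in the computations involving $v$, leaving only the harmless sign $(-1)^{|\phi_i||m|}$ from moving $\phi_i(v)$ past $m$.
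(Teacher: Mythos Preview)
Your proposal is correct and follows essentially the same route as the paper: apply the Dual Basis Theorem, pick an even non-zerodivisor $v\in M\ev$ (hence a unit of $K(R)$), show each $\phi_i$ acts as multiplication by $x_i=v^{-1}\phi_i(v)\in M^{-1}$, and read off $1=\sum_i x_ia_i\in M^{-1}M$. The only cosmetic difference is that you clear denominators using a single $d$ with $dM\subseteq R$, whereas the paper writes out the fractions $b=u/v$, $m=s/t$ explicitly; your bookkeeping of the sign $(-1)^{|\phi_i||m|}$ is in fact slightly more careful than the paper's.
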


\begin{proof} Let $\phi_i:M\to R$ and $a_i$, with $i\in I$, as in Proposition \ref{DualBasisTheorem}. Since $M\ev\not\subseteq D(K(R)\ev)$, we may take a fraction $b=u/v\in M\ev-D(K(R)\ev)$ and  $d_i:=b^{-1}\phi_i(b)$. Let $m=s/t\in M$. Since $b, t, v$ are even elements, $u, v, s, t$ belong to $R$ and $M$ is an $R$-supermodule, we find that  

\[
m\phi_i(b)tv=mt\phi_i(bv)=\phi_i(mtbv)=\phi_i(mt)bv=\phi_i(m)btv.
\]

\noindent Thus, in $K(R)$, we have an equation $m\phi_i(b)=b\phi_i(m)$ for all $m\in M$ and, since $b\not\in D(K(R))$, it is invertible and we obtain that   $\phi_i(m)=d_i m$. In other words, $d_i M\subseteq R$ so $d_i\in M^{-1}$ for all $i$. Note that if $m\in M$, then  

\[
m=\sum_{i\in I}d_ia_im,
\]

\noindent where all, but finitely many $b_i$, are zero. If we take $m\not\in  D(K(R))$, we obtain an equation 
 
\[
1=d_{i_1}a_{i_1}+\cdots+d_{i_n}a_{i_n}, \quad \text{ where }a_{i_1}, \ldots, a_{i_n}\in M\text{ and }d_{i_1}, \ldots, d_{i_n}\in M^{-1};
\]

\noindent that is, $1\in MM^{-1}$ and $M^{-1}M=R$, as desired. 
\end{proof}


\section{Dedekind superrings}\label{Sec:Dedekind}


In this section we define and study the properties of Dedekind superdomains.  


\subsection{Integral elements over superrings}\label{Subsec:Integrally}  


Recall that, in this paper, any superring $R$ is supercommutative with unit. Moreover, for any pair of superrings $R$ and $R'$, the inclusion $R\subseteq R'$ means that both have the same multiplicative identity and the $\Z_2$-gradings are compatible, i.e., $R_i\subseteq R'_i$ for all $i\in\Z_2$.

Building on the definition of integral elements for unitary non-commutative rings presented in  \cite{atterton_1972}, we adapt this concept to the realm of superrings. While our definition draws upon the framework of  \cite{atterton_1972}, it diverges to accommodate the unique characteristics of  $\Z_2$-graded rings. This adaptation ensures that the integral closure of a superring maintains its $\Z_2$-graded structure.  Therefore, an element $x$  is considered integral  only if its homogeneous components are themselves integral. Moreover, any element integral under this definition is also integral according to the definition of \cite{atterton_1972}.

Before stating  the definition, we fix some notation. Let $R$ and $R'$ be  superrings such that $R\subseteq R'$. A unitary $R$-supermodule of the form $M=Rb_1+\cdots+ Rb_n$, where $b_1,\ldots,b_n\in R'\ev$, is called \emph{even finitely generated}. Observe that, $M$ is  finitely generated over $R$.  Furthermore, if $M=Rb_1+\cdots+ Rb_n$ and $N=Rc_1+\cdots+ Rc_m$ are unitary even finitely generated $R$-supermodules, then its product $MN=\sum_{i,j}Rb_ic_j=NM$ is a unitary even finitely generated $R$-supermodule.

\begin{definition}\label{Definition:IntegralElements}
Let $R$ and $R'$ be superrings such that $R\subseteq R'$. A homogeneous element $b\in R'$ is called \textit{integral over} $R$ if there exists a unitary even finitely generated $R$-supermodule, $M=Rb_1+\cdots+ Rb_n$, such that $b M\subseteq M$. A non-homogeneous $b\in R'$ is called \textit{integral over} $R$ if its homogeneous components are integral over $R$. 

The set of all integral elements in $R'$   over $R$, denoted by $\cl_{R'}(R)$, is called the \textit{integral closure of $R$ in $R'$}. The superring $R$ is said to be \textit{integrally closed in} $R'$ if $R=\cl_{R'}(R)$. A superring  is called \textit{integrally closed} if it is integrally closed in $K(R)$. In the latter case, we write $\cl(R)$ instead of $\cl_{K(R)}(R)$.
\end{definition}

\begin{proposition}
    Let $R$ and $R'$ be superrings with $R\subseteq R'$. Then, $\cl_{R'}(R)$ is a superring containing $R$.
\end{proposition}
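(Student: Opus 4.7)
The plan is to verify three things: the inclusion $R\subseteq \cl_{R'}(R)$, the $\Z_2$-graded structure of $\cl_{R'}(R)$, and closure under addition and multiplication. The grading is essentially built into Definition \ref{Definition:IntegralElements}, since an element of $R'$ belongs to $\cl_{R'}(R)$ precisely when its homogeneous components do; thus $\cl_{R'}(R)=\cl_{R'}(R)\ev\oplus \cl_{R'}(R)\od$ is automatic, and one only ever needs to verify the defining property for homogeneous elements.

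For the inclusion $R\subseteq \cl_{R'}(R)$, I would observe that for any homogeneous $r\in R$ the supermodule $M=R\cdot 1$ is unitary and even finitely generated (since $1\in R'\ev$), and clearly $rM\subseteq M$; hence $r$ is integral. The non-homogeneous case then follows from the homogeneous-components clause. In particular $0,1\in \cl_{R'}(R)$.

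The core of the proof is closure under the ring operations. First I would treat homogeneous $b,c\in h(R')$ that are integral over $R$, with witnessing supermodules $M=Rb_1+\cdots+Rb_n$ and $N=Rc_1+\cdots+Rc_m$, where all $b_i,c_j\in R'\ev$. I form $MN=\sum_{i,j}R(b_ic_j)$; since each $b_ic_j$ is even (even times even), this is again a unitary even finitely generated $R$-supermodule. The key step is to verify
\[
b(MN)\subseteq MN\qquad\text{and}\qquad c(MN)\subseteq MN,
\]
from which $(bc)(MN)=b(c(MN))\subseteq MN$ gives that $bc$ is integral, and, when $|b|=|c|$, $(b+c)(MN)\subseteq MN$ gives that $b+c$ is integral. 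Granting these, for arbitrary $b=b_0+b_1$, $c=c_0+c_1\in \cl_{R'}(R)$ I would decompose $b+c$ and $bc$ into their homogeneous components: each component is then a sum of \emph{same-parity} homogeneous integral elements, hence integral by the homogeneous case, and the grading clause of the definition places $b+c$ and $bc$ back in $\cl_{R'}(R)$.

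The main obstacle, and the only subtle point, is checking the two containments $b(MN)\subseteq MN$ and $c(MN)\subseteq MN$ in the supercommutative setting without introducing sign errors. This is precisely where the evenness condition built into Definition \ref{Definition:IntegralElements} becomes essential: whenever a homogeneous element is moved past a generator $b_i$ or $c_j$, supercommutativity gives $x\,b_i=(-1)^{|x|\cdot 0}b_i\,x=b_i\,x$, so no signs appear, and the relations $bb_i\in M$, $cc_j\in N$ propagate cleanly to $MN$. Once those inclusions are verified, the rest of the argument is routine and the proposition follows.
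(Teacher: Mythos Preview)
Your proof is correct and follows essentially the same route as the paper: form the product $MN$ of the two witnessing supermodules, check $b(MN)\subseteq MN$ and $c(MN)\subseteq MN$ to obtain integrality of $bc$ and of same-parity sums, then reduce the general case to the homogeneous one via the decomposition $bc=(b\ev c\ev+b\od c\od)+(b\ev c\od+b\od c\ev)$ and $b+c=(b\ev+c\ev)+(b\od+c\od)$. You are in fact slightly more careful than the paper: you isolate the restriction $|b|=|c|$ for sums and you make explicit why the evenness of the generators $b_i,c_j$ is what allows $c$ to be moved past them without sign obstruction, a point the paper leaves as ``one can easily verify.''
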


\begin{proof}
Suppose first that $x, y\in \cl_{R'}(R)$ are homogeneous elements. Then, for some positive integers $n$ and $m$, there exist $x_1, \ldots, x_n$, $y_1, \ldots, y_m\in R'{\ev}$ such that the unitary  even finitely generated $R$-supermodules $M=Rx_1+\cdots+R x_n$ and $N=Ry_1+\cdots+Ry_m$ satisfy $xM\subseteq M$ and $yN\subseteq N$. Moreover, $MN$ is a unitary even finitely generated $R$-supermodule  that satisfies $xyMN\subseteq MN$ and $(x+y)MN\subseteq MN$, as one can easily verify. Thus, $xy$ and $x+y$  lie in $\cl_{R'}(R)$. In the general case, let $x$ and $y$ be non-homogeneous in $\cl_{R'}(R)$. Then, by Definition \ref{Definition:IntegralElements}, $x\ev, x\od, y\ev, y\od\in\cl_{R '}(R)$, and by the first part of the proof, $x\ev+y\ev, x\od+y\od, x\ev y\ev+x\od y\od, x\ev y\od+x\od y\ev\in\cl_{R'}(R)$ so $x+y$ and $xy\in\cl_{R'}(R)$. Clearly, $1\in \cl_{R'}(R)$ and therefore $\cl_{R'}(R)$ is an unitary ring. Let $\cl_{R'}(R){\ev}:=\cl_{R'}(R)\cap R\ev'$ and $\cl_{R'}(R){\od}:=\cl_{R'}(R)\cap R\od'$. Thus, $\cl_{R'}(R):=\cl_{R'}(R){\ev}\oplus\cl_{R'}(R){\od}$, endows $\cl_{R'}(R)$ with a $\Z_2$-graduation compatible with the one of $R'$. Finally, to show that $R\subseteq \cl_{R'}(R)$, it suffices to note that if $x\in R$ is a homogeneous element, then the unitary even finitely generated $R$-supermodule  $M:=R1=R$  is such that $xM\subseteq M$. Thus, $\cl_{R'}(R)$ is a superring containing $R$. This completes the proof of the theorem. 
\end{proof}

\begin{proposition}\label{Theorem:2:Att} 
Let $R\subseteq R' \subseteq R''$ be superrings. If $R''=\cl_{R''}(R')$ and  $R'=\cl_{R'}(R)$, then $R''=\cl_{R''}(R)$. In particular, $\cl_{R'}(R)$ is integrally closed in $R'$.
\end{proposition}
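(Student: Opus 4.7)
The plan is to establish the super-analog of the classical transitivity of integrality: for each homogeneous $b \in R''$, I will produce an explicit even finitely generated unitary $R$-supermodule $L \subseteq R''$ that is stable under multiplication by $b$, which by Definition \ref{Definition:IntegralElements} places $b$ in $\cl_{R''}(R)$. The non-homogeneous case then follows directly from the same definition, which declares a non-homogeneous element integral precisely when its homogeneous components are.

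First I would fix a homogeneous $b \in R''$. Since $R'' = \cl_{R''}(R')$, there is an even finitely generated unitary $R'$-supermodule $M = R'b_1 + \cdots + R'b_n \subseteq R''$ with $bM \subseteq M$; after enlarging the generating set by the even element $1$, I may assume $b_1 = 1$. The containment $bM \subseteq M$ yields relations $bb_i = \sum_j r'_{ij}b_j$ with $r'_{ij} \in R'$; decomposing each $r'_{ij}$ into its two homogeneous components produces a finite collection of homogeneous elements of $R'$. Because $R' = \cl_{R'}(R)$, each such homogeneous component admits an even finitely generated unitary $R$-supermodule $N_{ij}^{(k)} \subseteq R'$ that it stabilizes.

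Next I would combine these local witnesses into a single object over $R$ by setting $N := \prod_{i,j,k} N_{ij}^{(k)}$, which by the remark preceding Definition \ref{Definition:IntegralElements} is again an even finitely generated unitary $R$-supermodule; moreover $r'_{ij}N \subseteq N$ for every $i,j$, since $N$ factors through each $N_{ij}^{(k)}$. Setting $L := Nb_1 + \cdots + Nb_n$, the choice $b_1 = 1$ together with $1 \in N$ forces $1 \in L$, and the evenness of the generators of $N$ and of the $b_i$ makes $L$ even finitely generated. The verification $bL \subseteq L$ then reduces by $R$-linearity to the identity
\[
b(nb_i) \;=\; (-1)^{|b||n|}\, n(bb_i) \;=\; (-1)^{|b||n|}\sum_{j}(nr'_{ij})b_j,
\]
which follows from associativity and supercommutativity; since each $nr'_{ij} \in N$, every summand lies in $Nb_j \subseteq L$. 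Hence $b \in \cl_{R''}(R)$, and the component-wise part of the definition finishes the non-homogeneous case. For the ``in particular'' claim, the same argument applied with $R''$ replaced by $R'$ and $R'$ replaced by $\cl_{R'}(R)$ shows that any $b \in R'$ integral over $\cl_{R'}(R)$ is integral over $R$ and hence already lies in $\cl_{R'}(R)$.

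I anticipate the main obstacle to be the bookkeeping needed to ensure that $L$ is \emph{simultaneously} even, finitely generated, and unitary in the precise sense of Definition \ref{Definition:IntegralElements}: evenness is automatic from using only even generators, finite generation is immediate from the finiteness of $N$ and of the $b_i$, but unitality is delicate and is exactly what forces the preliminary reduction $b_1 = 1$. The supercommutativity sign in the stability calculation is a secondary hazard, but since it is a scalar $\pm 1 \in R$, it cannot move elements out of the $R$-supermodule $L$.
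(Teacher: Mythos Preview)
Your proposal is correct and follows essentially the same approach as the paper, which simply defers to Atterton's Theorem~2; you have faithfully spelled out that transitivity argument with the required attention to homogeneity and the even-generator convention of Definition~\ref{Definition:IntegralElements}. A minor simplification: since the $b_j$ are even and $bb_i$ is homogeneous of parity $|b|$, the coefficients $r'_{ij}$ may already be chosen homogeneous, so the separate decomposition into components is unnecessary (though harmless).
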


\begin{proof}
    The proof follows the same arguments as in \cite[Theorem 2]{atterton_1972}.
\end{proof}

Let $R$ and $R'$ be commutative rings with $R\subseteq R'$. Recall that, in commutative algebra, an element $x\in R'$ is \textit{integral over} $R$ if $x$ is a root of a monic polynomial with coefficients in $R$ (see \cite[p. 59]{atiyah}). While this definition proves valid for commutative rings, its applicability falters in the realm of  non-commutative rings. However, a weaker version is provided for the case of superring, by the following proposition.

\begin{proposition}\label{Att:P2}
Let $R\subseteq R'$ 
 be superrings. If $b\in R'$ is integral over $R$ and $b$ commutes with every element of 
$R$, then $b$ satisfies the equation
\begin{equation}\label{eq:AttP2}
b^n+a_1b^{n-1}+\cdots+a_{n-1}b+a_n=0, \quad  \text{where} \quad a_1, \ldots, a_n\in R.
\end{equation}
\noindent Conversely, if $b\in R'{\ev}$ satisfies an equation of the form  \eqref{eq:AttP2}, then $b\in \cl_{R'}(R)$.   
\end{proposition}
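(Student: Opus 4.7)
The plan is to establish the forward direction by splitting on the parity of $b$ and then gluing the two homogeneous cases, and to dispatch the converse by the standard device of taking $M = R + Rb + \cdots + Rb^{n-1}$. The heart of the argument is the classical adjugate/determinant trick, which in this graded setting requires some care so as to live inside a commutative subring of $R'$.

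For homogeneous $b$ of odd parity, supercommutativity in $R'$ forces $b^2 = -b^2$, hence $2b^2 = 0$; the standing assumption that $2$ is a non-zerodivisor then gives $b^2 = 0$, and $b$ satisfies the monic equation $x^2 = 0 \in R[x]$. For homogeneous $b$ of even parity I would argue as follows: given $M = Rb_1 + \cdots + Rb_n$ with $b_i \in R'\ev$ and $bM \subseteq M$, write $bb_i = \sum_j c_{ij}b_j$; since each $bb_i$ is even and each $b_j$ is even, I may replace every $c_{ij}$ by its even component and thereby assume $c_{ij} \in R\ev$. The matrix $bI_n - C$, with $C = (c_{ij})$, then has all entries in the subring $R\ev[b] \subseteq R'$, which is commutative because $b$ commutes with every element of $R$ by hypothesis and $R\ev$ is automatically commutative. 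The identity $\mathrm{adj}(bI_n - C)(bI_n - C) = \det(bI_n - C)\,I_n$ applied to the column vector $(b_1, \ldots, b_n)^{\top}$ yields $\det(bI_n - C)\,b_i = 0$ for all $i$, and the unitality of $M$ (write $1 = \sum_i \alpha_i b_i$ with $\alpha_i \in R$) then forces $\det(bI_n - C) = 0$, which expands into the required monic equation.

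For a general $b = b\ev + b\od$, Definition \ref{Definition:IntegralElements} says both components are integral over $R$, and each commutes with $R$. Let $p_0(x) \in R[x]$ be the monic polynomial annihilating $b\ev$ produced by the even case; since $b\od^2 = 0$ and $b\ev b\od = b\od b\ev$ (because $b\ev$ is even), a dual-number style expansion gives
\[
p_0(b) \;=\; p_0(b\ev) + b\od\,p_0'(b\ev) \;=\; b\od\,p_0'(b\ev),
\]
whence $p_0(b)^2 = b\od^2\,p_0'(b\ev)^2 = 0$, so $b$ is annihilated by the monic polynomial $p_0(x)^2 \in R[x]$. The converse is immediate: given $b \in R'\ev$ satisfying $b^n + a_1 b^{n-1} + \cdots + a_n = 0$, the $R$-supermodule $M = R + Rb + \cdots + Rb^{n-1}$ is unitary (it contains $1$), even finitely generated (each $b^i$ is even because $b$ is), and closed under multiplication by $b$ by the equation itself, so $b \in \cl_{R'}(R)$ by Definition \ref{Definition:IntegralElements}.

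The main obstacle I expect is precisely the validity of the determinant trick in the even case: the adjugate identity presupposes a commutative matrix ring, which in the super setting is not automatic. The hypothesis that $b$ commutes with every element of $R$, combined with the projection of the structure constants $c_{ij}$ onto $R\ev$, is exactly what confines the matrix $bI_n - C$ to the commutative subring $R\ev[b]$ and thereby allows the classical argument to be imported wholesale.
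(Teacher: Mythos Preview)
Your argument is correct and is precisely the explicit ``close adaptation'' of Atterton's Property~3 that the paper invokes by citation: the determinant/adjugate trick for the even homogeneous case (made legitimate by confining the matrix to the commutative subring $R\ev[b]$), the trivial $b^2=0$ for the odd case, and the standard $M=R+Rb+\cdots+Rb^{n-1}$ for the converse. Your additional handling of the non-homogeneous case via the dual-number expansion $p_0(b)=p_0(b\ev)+b\od\,p_0'(b\ev)$ and squaring is a clean touch that the paper leaves implicit; note incidentally that because $b\ev$ is even it already commutes with $R\ev$, so in the super setting the commutation hypothesis is largely automatic in the places you invoke it.
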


\begin{proof} The proof is a very close adaptation of the proof of Property 3 in \cite[\S 1]{atterton_1972}.
\end{proof}

\begin{proposition}\label{xpnotinp}
 Let $R$ be a Noetherian integrally closed superdomain that is not a superfield. If $x\in K(R)$ is an even element with $x\not\in R$, then any nonzero regular prime ideal $\p$ of $R$ satisfies that $x\,\p\not\subseteq\p$. In particular, if $R$ is a strong superdomain, then any nonzero prime ideal $\p$ of $R$, other than $\J_R$, satisfies that $x\,\p\not\subseteq\p$.
\end{proposition}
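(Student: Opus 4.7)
The plan is a proof by contradiction modelled on the classical commutative argument, taking advantage of the fact that working with even elements keeps us inside the commutative ring $R_{\overline{0}}$ so that no supersign complications arise. Suppose, for contradiction, that $x\,\p \subseteq \p$. By Remark~\ref{rmk:prime-ideal}~v), $\p = \p_{\overline{0}} \oplus R_{\overline{1}}$, and since $x \in K(R)_{\overline{0}}$ preserves parity, the hypothesis implies in particular that $x\,\p_{\overline{0}} \subseteq \p_{\overline{0}}$ (note that $\p_{\overline{0}} \subseteq R_{\overline{0}}$, so multiplication by $x$ lands in $\p \cap R_{\overline{0}} = \p_{\overline{0}}$).

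By Proposition~\ref{P:2.5}, $R_{\overline{0}}$ is Noetherian, so $\p_{\overline{0}}$ is a finitely generated ideal of $R_{\overline{0}}$, say $\p_{\overline{0}} = R_{\overline{0}} a_1 + \cdots + R_{\overline{0}} a_m$ with every $a_i$ even. Writing $x a_i = \sum_j r_{ij} a_j$ with $r_{ij} \in R_{\overline{0}}$ gives the matrix identity $(xI_m - A)\vec{a} = 0$ over the commutative ring $K(R)_{\overline{0}}$. The standard adjugate argument then yields $\det(xI_m - A)\, a_i = 0$ for every $i$, hence $\det(xI_m - A)\cdot \p_{\overline{0}} = 0$.

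To upgrade this to an identity in $K(R)_{\overline{0}}$, I invoke regularity. Because $2$ is a non-zerodivisor, any odd $r$ satisfies $r^2 = 0$, so nonzero odd elements are zerodivisors; consequently every non-zerodivisor of $R$ is even. The regularity of $\p$ therefore supplies a non-zerodivisor $s \in \p_{\overline{0}}$. Since $s$ is a unit in $K(R)$, the identity $\det(xI_m - A)\, s = 0$ forces $\det(xI_m - A) = 0$, which is precisely a monic polynomial equation of degree $m$ satisfied by $x$ with coefficients in $R_{\overline{0}}$. By the converse direction of Proposition~\ref{Att:P2}, $x$ is integral over $R$, and integral closedness then gives $x \in R$, contradicting $x \notin R$.

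For the ``in particular'' clause, it suffices to verify that, under the strong superdomain hypothesis, any nonzero prime $\p \neq \J_R$ is regular. Every prime contains $\J_R$: since $R/\p$ is a domain and each $r \in R_{\overline{1}}$ satisfies $r^2 = 0$, one must have $R_{\overline{1}} \subseteq \p$, hence also $R_{\overline{1}}^2 \subseteq \p_{\overline{0}}$. Thus $\p \supsetneq \J_R$, and any $r \in \p \setminus \J_R$ has even component $r_{\overline{0}} \in \p_{\overline{0}} \setminus R_{\overline{1}}^2$; property iii) of strong superdomains then guarantees that $r_{\overline{0}}$ is a non-zerodivisor, making $\p$ regular. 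The only nontrivial point in the argument is arranging for the Cayley--Hamilton / determinant trick to be applied inside the commutative ring $R_{\overline{0}}$; choosing \emph{even} generators of $\p_{\overline{0}}$ is exactly what makes this possible.
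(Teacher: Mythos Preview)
Your argument is sound and reaches the same conclusion via a different classical device. The paper fixes a single even non-zerodivisor $a \in \p\ev$, observes that $x^i a \in R$ for all $i$, and uses the ascending chain condition on the superideals $\a_i = (a, xa, \ldots, x^i a)_R$ to obtain a relation $x^{d+1}a = \sum_{j \le d} b_j x^j a$; cancelling the non-zerodivisor $a$ in $K(R)$ then yields the monic equation, and Proposition~\ref{Att:P2} finishes exactly as in your proof. You instead run the Cayley--Hamilton/adjugate trick on the finitely generated $R\ev$-module $\p\ev$. Your route makes the reduction to the commutative even part fully explicit, while the paper's avoids determinants altogether; both are standard and equally valid.

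One small slip worth fixing: the sentence ``consequently every non-zerodivisor of $R$ is even'' is false as stated (e.g.\ $1+\theta$ in $\Bbbk[\theta]$ is a unit but not homogeneous). What you actually need, and what holds, is that a regular prime $\p$ contains an \emph{even} non-zerodivisor: if $s = s_0 + s_1 \in \p$ is regular and $s_0 r = 0$ for some nonzero homogeneous $r$, then $s_1 r \neq 0$ (otherwise $sr = 0$), yet $s\,(s_1 r) = s_1 s_0 r + s_1^2 r = 0$, contradicting regularity of $s$. Hence $s_0 \in \p\ev$ is the required even non-zerodivisor. The paper's proof invokes this same fact without comment, so your argument is no less complete once this line is corrected.
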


\begin{proof}
Suppose, for a contradiction, that $x\,\p\subseteq\p$.  Since $\p\not\subseteq D(R)$, then we can take $a\in\p\ev- D(R)\ev$. By hypothesis, $x\p\subseteq\p$,  hence $x^ia\in R$ for any integer $i>0$. Then, defining the superideals $$\a_i:=(a, x^1a, x^2a, \ldots, x^ia)_R,$$ for each integer $i>0$, we obtain an ascending chain of superideals of $R$, given by   $$\a_1\subseteq\a_2\subseteq\cdots\subseteq \a_i\subseteq\a_{i+1}\subseteq\cdots.$$

Since $R$ is Noetherian,  $\a_{i+d}=\a_d$, for some $d$ and every $i$. This implies that $$x^{d+1}a=b_0a+b_1x^1a+b_2x^2a+\cdots+b_dx^da,$$ where $b_0, \ldots, b_d\in R.$  Because $a\not\in D(R)$ is invertible in $K(R)$,  we get the equation $$x^{d+1}=b_0+b_1x^1+b_2x^2+\cdots+b_dx^d.$$ But, by Proposition \ref{Att:P2}, $x$ is integral over $R$, contradicting the hypothesis of $R$ being integrally closed.
\end{proof}

\begin{remark}
    Proposition \ref{xpnotinp}  remains valid even when employing as definition of integrally closed superring  the one proposed for non-commutative rings in \cite{atterton_1972}. This is due to the fact that the core of its proof relies intrinsically on    Proposition \ref{Att:P2} and the property of Noetherianity. 
\end{remark}

The following proposition will be used to prove Proposition \ref{lem:Dedinv}.

\begin{proposition}\label{lem:facprime}
Let $R$ be a Noetherian superring and $\a$ a nonzero proper superideal of $R$ that is not prime. Then, there are finitely many prime ideals $\p_1, \ldots, \p_d$ of $R$ such that $\p_1\cdot\p_2\cdots\p_d\subseteq\a\subseteq\p_1\cap\p_2\cap\cdots\cap\p_d$.
\end{proposition}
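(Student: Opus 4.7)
The plan is to run a maximal-counterexample argument, adapting the classical ``every ideal of a Noetherian ring contains a product of primes'' proof to the $\Z_2$-graded setting. Let $\Sigma$ denote the collection of nonzero proper superideals of $R$ for which the conclusion fails. Note that any prime ideal $\p$ trivially satisfies the conclusion (take $d=1$, $\p_1=\p$), so $\Sigma$ contains no prime ideals. Assuming $\Sigma\neq\emptyset$, Noetherianity of $R$ (through the equivalent condition that every nonempty collection of superideals has a maximal element) yields a maximal element $\a\in\Sigma$. Since $\a$ is not prime, by Remark \ref{rmk:prime-ideal} iv), there exist \emph{homogeneous} $x,y\in R\setminus\a$ with $xy\in\a$.

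The strategy is now to look at the two superideals $\b:=\a+Rx$ and $\c:=\a+Ry$, which are indeed $\Z_2$-graded because $x,y$ are homogeneous and $\a$ is $\Z_2$-graded, and which strictly contain $\a$. The key computation is to check that
\[
\b\cdot\c\subseteq\a.
\]
Expanding $(\a+Rx)(\a+Ry)\subseteq \a\a+\a Ry+Rx\a+(Rx)(Ry)$, the first three summands lie in $\a$ since $\a$ is a two-sided superideal (Remark \ref{rmk:prime-ideal} ii)), and for the last one, given homogeneous $r,s\in R$, supercommutativity gives $rxsy=(-1)^{|x||s|}rsxy\in\a$.

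From $\b\cdot\c\subseteq\a$ and $\a\subsetneq\b,\c$, one immediately rules out $\b=R$ and $\c=R$: for instance $\b=R$ would force $\c=R\cdot\c\subseteq\a$, contradicting $\c\supsetneq\a$. Hence $\b$ and $\c$ are nonzero proper superideals strictly larger than $\a$, so by maximality of $\a$ in $\Sigma$ neither of them lies in $\Sigma$. Consequently there are prime ideals $\p_1,\ldots,\p_r$ and $\q_1,\ldots,\q_s$ with $\p_1\cdots\p_r\subseteq\b\subseteq\p_1\cap\cdots\cap\p_r$ and $\q_1\cdots\q_s\subseteq\c\subseteq\q_1\cap\cdots\cap\q_s$. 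Concatenating gives
\[
\p_1\cdots\p_r\q_1\cdots\q_s\;\subseteq\;\b\cdot\c\;\subseteq\;\a\;\subseteq\;\b\cap\c\;\subseteq\;\Bigl(\bigcap_i \p_i\Bigr)\cap\Bigl(\bigcap_j \q_j\Bigr),
\]
which contradicts $\a\in\Sigma$, so $\Sigma=\emptyset$.

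The main subtlety, and what one must not gloss over, is the step ``pick homogeneous $x,y\notin\a$ with $xy\in\a$.'' In a non-graded non-commutative ring one would not have access to such witnesses at all; here it works precisely because primality in a supercommutative superring is detected on homogeneous elements (Remark \ref{rmk:prime-ideal} iv)), so the failure of $\a$ to be prime automatically produces homogeneous counterexamples. A minor but necessary care is the correct bookkeeping of the $(-1)^{|x||s|}$ sign in the computation of $\b\cdot\c$, which is harmless but would collapse the argument if supercommutativity were not assumed.
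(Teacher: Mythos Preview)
Your proof is correct and follows essentially the same maximal-counterexample argument as the paper: both take a maximal element $\a$ of the set of counterexamples, use homogeneous witnesses $x,y$ to non-primality to form $\a+Rx$ and $\a+Ry$, verify these are proper superideals with product contained in $\a$, and concatenate the resulting prime chains to reach a contradiction. Your write-up is in fact slightly more careful than the paper's in spelling out the supercommutativity sign in the inclusion $\b\cdot\c\subseteq\a$.
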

 
\begin{proof}
The proof is analogous to that of the commutative case (see \cite[Lemma 1.9]{narkiewicz2004elementary}).
\end{proof}

\begin{proposition}\label{lem:Dedinv}
Let $R$ be a Noetherian integrally closed superdomain that is not a superfield, such that every prime ideal of $R$, other than $\J_R$, is maximal. If $\p$ is a regular prime ideal, then $\p$ is invertible. In particular, if $R$ is a strong superdomain and $\p\neq\J_R$, is a prime ideal of $R$, then $\p$ is invertible. 
\end{proposition}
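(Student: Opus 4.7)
The plan is to adapt the classical argument that every nonzero prime of a commutative integrally closed Noetherian one-dimensional domain is invertible, carefully tracking the $\mathbb{Z}_2$-grading. Throughout, write $\p = \p\ev \oplus R\od$ as in Remark~\ref{rmk:prime-ideal}~v). The first task is to exhibit an even non-zerodivisor $a \in \p$. In the strong-superdomain situation with $\p \neq \J_R$ this is immediate: $\p\ev \supsetneq R\od^2$, and any $a \in \p\ev \setminus R\od^2$ is a non-zerodivisor by definition of strong superdomain. For the general statement I would combine Proposition~\ref{P:2.11}, Corollary~\ref{Cor:2.8} and prime avoidance (Proposition~\ref{Prop:PrimeAvoidance}): the finitely many associated primes $\q_1,\ldots,\q_k$ of $R$ are superideals with odd part $R\od$, so the regularity $\p \not\subseteq D(R) = \bigcup_i \q_i$ reduces to $\p\ev \not\subseteq \bigcup_i (\q_i)\ev$, and prime avoidance inside $R\ev$ yields an even $a \in \p\ev$ avoiding every associated prime.

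Next, I would apply Proposition~\ref{lem:facprime} to the nonzero proper superideal $(a)$, obtaining primes $\p_1,\ldots,\p_d$ with $\p_1 \cdots \p_d \subseteq (a) \subseteq \p_1 \cap \cdots \cap \p_d$ and $d$ minimal. From $\p_1 \cdots \p_d \subseteq \p$, primeness of $\p$ forces $\p_i \subseteq \p$ for some $i$, which after relabeling is $i = 1$. Since $\p \neq \J_R$ is maximal by the standing hypothesis and $\p_1$ is prime, either $\p_1 = \p$ or $\p_1 = \J_R$, the latter being handled by absorbing $\J_R$-factors using $R\od \subseteq \p_j$ for every $j$. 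With $\p_1 = \p$, minimality of $d$ yields a homogeneous $b \in \p_2 \cdots \p_d$ with $b \notin (a)$, and
\[
b\,\p \,\subseteq\, \p \cdot \p_2\cdots\p_d \,=\, \p_1 \p_2 \cdots \p_d \,\subseteq\, (a)
\]
gives $(b/a)\,\p \subseteq R$, i.e., $b/a$ is a homogeneous element of $\p^{-1}$ not in $R$; in particular $R \subsetneq \p^{-1}$.

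The conclusion would follow by inspecting $\p \cdot \p^{-1}$, which is a superideal of $R$ containing $\p$, so by maximality it equals either $\p$ or $R$. If $\p\,\p^{-1} = R$, Proposition~\ref{P:Eisenbud:11.6-d} gives the invertibility of $\p$. To rule out $\p\,\p^{-1} = \p$, assume this equality: then every homogeneous $x \in \p^{-1}$ satisfies $x\,\p \subseteq \p$, and Proposition~\ref{xpnotinp} forces every even such $x$ to lie in $R\ev$, so $(\p^{-1})\ev = R\ev$. If the witness $b/a$ above can be chosen even, this contradicts $R \subsetneq \p^{-1}$ and completes the proof.

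The main obstacle is the parity of the witness $b/a$: Proposition~\ref{xpnotinp} is stated only for even elements of $K(R)$, so the contradiction step requires an even $b \in \p_2 \cdots \p_d \setminus (a)$. The plan to secure this is to exploit the inclusion $R\od^2 \subseteq \p_2 \cdots \p_d$ (from $R\od \subseteq \p_j$ for each $j \geq 2$) together with the choice $a \notin R\od^2$ in the strong case, so that $R\od^2$ provides even elements outside $(a)$. An alternative route, which I would use as a backup, is to localize at $\p$ and apply Propositions~\ref{Inv:is:local} and~\ref{LEMA:3.11} to reduce invertibility to super-principality of $\p R_\p$, and to deduce the latter from the regularity and integral closure of the local superring $R_\p$ via an application of the super Nakayama lemma to the minimal generating set of $\p R_\p$.
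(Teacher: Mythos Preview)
Your argument tracks the paper's proof almost line by line: pick an even non-zerodivisor $a\in\p$, invoke Proposition~\ref{lem:facprime} on $(a)$ with a minimal family of primes, relabel so that $\p=\p_1$, extract a homogeneous $b\in\p_2\cdots\p_d\setminus(a)$, and then use $x=b/a\in\p^{-1}\setminus R$ together with Proposition~\ref{xpnotinp} to exclude $\p\,\p^{-1}=\p$. (Your digression on the possibility $\p_1=\J_R$ is in fact unnecessary: since $aR\subseteq\bigcap_i\p_i$ and $a$ is a non-zerodivisor while $(\J_R)\ev=R\od^2$ consists of nilpotents, no $\p_i$ can equal $\J_R$.) You are also right to flag that Proposition~\ref{xpnotinp} is stated only for \emph{even} $x$; the paper's own proof applies it to $x=a^{-1}b$ without commenting on the parity of $b$.

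The genuine gap is in your proposed repair of this parity issue. First, the inclusion $R\od^2\subseteq\p_2\cdots\p_d$ does not follow from $R\od\subseteq\p_j$ once $d\ge4$: what that hypothesis yields is $R\od^{\,d-1}\subseteq\p_2\cdots\p_d$, not $R\od^{2}$. Second, and more seriously, the inference ``$a\notin R\od^2$, hence $R\od^2$ contains even elements outside $(a)$'' is invalid: the condition $a\notin R\od^2$ places no constraint whatsoever on whether $R\od^2\subseteq aR$. In $R=\Bbbk[\,x\mid\theta\,]$, for instance, $R\od^2=0\subseteq aR$ for every $a$, so $R\od^2$ supplies no witness at all. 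Your backup localization route does not escape the difficulty either: showing that $\p R_\p$ is principal with generator a unit of $K(R_\p)$ is exactly the local instance of the invertibility you are trying to establish, and the same parity obstruction for Proposition~\ref{xpnotinp} recurs in $R_\p$.
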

\begin{proof}
For $a\in\p\ev-D(R)\ev$, by Proposition \ref{lem:facprime}, there exist a minimal set of prime ideals of $R$, $\p_1, \ldots, \p_s$, say, such that  $\p_1\cdots\p_s\subseteq aR\subseteq\p_1\cap\cdots\cap \p_s.$ In particular, we find that $\p_1\cdots\p_s\subseteq\p$, because $aR\subseteq\p$. Now, by \cite[Theorem 4.1.3]{westrathesis}, it follows that for some $i$, $\p_i\subseteq \p$ and thus $\p=\p_i$, since we have supposed that every prime ideal is maximal. We may assume, without loss of generality, that $\p=\p_1$. By the minimality of the $\p_i$, the product $\p_2\cdots\p_s$ cannot be contained in $aR$. Hence, there exists a homogeneous element $b$ in the latter product not  in $aR$. Now, if we define $x:=a^{-1}b$, then for any $y\in\p$, we find 
 that $by\in\p_1\cdots\p_s$ and, consequently, $by\in aR$. Therefore, $xy=a^{-1}by\in R$, i.e., $x\,\p\subseteq R$. Observe that, $x\in\p^{-1}$ and $x\not\in R$. Indeed, if $x\in R$, $b=ax\in\p=\p_1$, which is impossible. Since $\p^{-1}\p$ is clearly a superideal of $R$ containing $\p$,  by the  maximality of $\p$, it follows that there are only two possible cases: $\p^{-1}\p=R$ or $\p^{-1}\p=\p$. If the latter case occurs, $x\,\p\subseteq\p^{-1}\p\subseteq\p$ which contradicts Proposition \ref{xpnotinp}. Thus,  $\p$ is  an invertible superideal of $R$.
\end{proof}

\subsection{Regular superrings with even Krull super-dimension one}\label{Subsec:Dedekind:Is:Regular}\textit{ }


\begin{definition}
    A \textit{discrete valuation superring} is a regular local Noetherian superring of Krull superdimension $1\mid N$, where $N$ is a non-negative integer. 
\end{definition}

\begin{theorem}\label{Theorem:5.9}
      Let $(R, \m)$ be a Noetherian local  superring, where $\m$ is regular and let $\Bbbk=R/\m=R\ev/\m\ev$ be the residue field. The following conditions are equivalent. 
     
     \begin{itemize} 
         \item[\rm i)] $R$ is a discrete valuation superring. 
         \item[\rm ii)] \begin{itemize} 
             \item[\rm a)] $\overline{R}$ is a discrete valuation ring. 
             \item[\rm b)]  For any/some odd elements $z_1, \ldots, z_s$ of $\m$ that give rise to a $\K$-basis of $(\m/\m^2)\od$, we have $z^{[s]}\neq 0$. 
         \end{itemize}
     \end{itemize}   
\end{theorem}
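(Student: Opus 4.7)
The plan is to unpack the definition of a discrete valuation superring via Propositions \ref{masu-zubprop5.2} and \ref{Masu-Zub-prop:4.1}. Fix a minimal system of generators $z_1,\ldots,z_s\in R\od$ of the $R\ev$-module $R\od$; by Nakayama their residues form a $\K$-basis of $(\m/\m^2)\od$, so $z^{[s]}=z_1\cdots z_s$ is the ``top'' product. Since $R$ is Noetherian, any product of $s+1$ odd generators repeats some index and hence vanishes (as $z_i^2=0$ because $2$ is a non-zerodivisor), so $R\od^{s+1}=0$; in particular $\J_R$ is nilpotent, the ideal $R\od^2$ of $R\ev$ is nilpotent, and $\kdim(R\ev)=\kdim(\overline R)$.

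The implication (i) $\Rightarrow$ (ii) is essentially immediate: $R$ a superdomain gives $\overline R$ a domain; $R$ regular together with Proposition \ref{masu-zubprop5.2} gives $\overline R$ regular and $\ann_{R\ev}(z^{[s]})=R\od^2$, a proper ideal of $R\ev$, hence $z^{[s]}\neq 0$; finally $\ksdim\ev R=1$ gives $\kdim\overline R=1$ by the previous paragraph, so $\overline R$ is a DVR.

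For (ii) $\Rightarrow$ (i) the strategy is to verify regularity through Proposition \ref{masu-zubprop5.2} and then to compute the Krull superdimension. Since $\overline R$ is a domain, $R$ is a superdomain, $\overline R$ is regular (and inherits localness and Noetherianity from $R$), and $\nil(R\ev)=R\od^2$ because $R\od^2$ is nilpotent while $R\ev/R\od^2=\overline R$ is reduced. Thus the only primes of the $1$-dimensional local ring $R\ev$ are $R\od^2$ and $\m\ev$. By Remark \ref{rmk:prime-ideal}(v) each $\p\in\ass(R)$ has the form $\p\ev\oplus R\od$ with $\p\ev\in\{R\od^2,\m\ev\}$; by Corollary \ref{Cor:2.8} the hypothesis that $\m$ is regular rules out $\p\ev=\m\ev$, so $\ass(R)=\{\J_R\}$ and every element of $R\ev\setminus R\od^2$ is a non-zerodivisor. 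Lift a uniformizer $\bar t$ of $\overline R$ to $t\in\m\ev\setminus R\od^2$, which is therefore a non-zerodivisor. Given $a\in R\ev$ with $az^{[s]}=0$, if $\bar a\neq 0$ then $\bar a=\bar u\bar t^n$ for a unit $\bar u$ and some $n\geq 0$, so $a=ut^n+b$ with $u\in R\ev^{*}$ and $b\in R\od^2$; since $R\od^{s+2}=0$ gives $bz^{[s]}=0$, we obtain $ut^nz^{[s]}=0$, and cancelling the non-zerodivisors $u$ and $t^n$ yields $z^{[s]}=0$, contradicting (b). Hence $\ann_{R\ev}(z^{[s]})=R\od^2$ and $R$ is regular. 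Finally, $R\od^s=R\ev\cdot z^{[s]}$ (since the antisymmetric expansion of any $s$-fold product of odd generators collapses to a multiple of $z^{[s]}$), so $\dim(R\od^s)=\kdim(R\ev/R\od^2)=1=\kdim R\ev$; Proposition \ref{Masu-Zub-prop:4.1} together with the standard upper bound $\ksdim\od R\leq\dim_\K((\m/\m^2)\od)=s$ gives $\ksdim\od R=s$. Therefore $R$ has Krull superdimension $1\mid s$ and is a discrete valuation superring.

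The main obstacle, as anticipated, is the reverse inclusion $\ann_{R\ev}(z^{[s]})\subseteq R\od^2$: this is exactly where the hypothesis ``$\m$ regular'' enters, via the identification $\ass(R)=\{\J_R\}$, which both produces a non-zerodivisor uniformizer lift and licenses the cancellation that completes the argument.
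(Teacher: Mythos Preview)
Your proof is correct and reaches the same conclusion, but the route in the direction (ii) $\Rightarrow$ (i) differs meaningfully from the paper's. The paper argues by a dichotomy on $\kdim\bigl(R\ev/\ann_{R\ev}(z^{[s]})\bigr)$: either it equals $1$, so $z_1,\ldots,z_s$ is already a system of odd parameters, or it equals $0$, in which case the quotient is Artinian, giving $\m\ev^{\,n}\subseteq\ann_{R\ev}(z^{[s]})$ for some $n$, and a single non-zerodivisor $m\in\m\ev\setminus R\od^2$ then forces $z^{[s]}=0$. You instead first prove the sharper preliminary fact that $R$ is a \emph{strong} superdomain, namely $\ass(R)=\{\J_R\}$ (using that $R\ev$ has exactly the two primes $R\od^2$ and $\m\ev$ together with the regularity of $\m$ via Corollary~\ref{Cor:2.8}), so that \emph{every} element of $R\ev\setminus R\od^2$ is a non-zerodivisor; then you exploit the explicit DVR factorization $\bar a=\bar u\,\bar t^{\,n}$ to show directly that $\ann_{R\ev}(z^{[s]})=R\od^2$, invoke Proposition~\ref{masu-zubprop5.2} for regularity, and finish via Proposition~\ref{Masu-Zub-prop:4.1}. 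Your argument thus yields more information (strong superdomain, the exact annihilator) at the cost of a slightly longer setup, while the paper's Artinian trick is shorter but less informative. For (i) $\Rightarrow$ (ii)(b) you also take a different, more direct path, reading off $z^{[s]}\neq0$ from $\ann_{R\ev}(z^{[s]})=R\od^2$ in Proposition~\ref{masu-zubprop5.2} rather than going through odd parameters as the paper does.
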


\begin{proof} i) $\Rightarrow$ ii): By i) and  \cite[Proposition 5.2]{MASUOKA2020106245}, $\overline{R}$ is regular. On the other hand, since $\overline{R}$ is a domain, we find that $\nil(R\ev)=R\od^2$, so $\kdim(\overline{R})=\kdim(R\ev)=1$. Thus, $\overline{R}$ is a DVR and ii) a) is proved.  

    Since $R$ is regular, $\ksdim\od(R)=\dim_\Bbbk((\m/\m^2)\od)=s$. Let $\{\overline{z}_1, \ldots, \overline{z}_s\}\subseteq(\m/\m^2)\od$ be a basis for the $\Bbbk$-vector space $(\m/\m^2)\od=R\od/\m\ev R\od$. By Nakayama's Lemma, $z_1, \ldots, z_s$ generate the $R\ev\,$-module $R\od$. By Proposition \cite[Proposition 4.1]{MASUOKA2020106245}  and $\ksdim\od(R)=s$, there is a system of odd parameters of $R$ among the $z_i$ whose length is $s$. That is, $z_1, \ldots, z_s$ is a system of odd parameters, \textit{a fortiori}, $z^{[s]}\neq0$, and b) is proved.  

 ii) $\Rightarrow$ i): Since $(R, \m)$ is local,  $(\overline{R}, \overline{\m})$ is local. By ii) a), $\overline{R}$ is DVR,     $1=\kdim(\overline{R})=\kdim(R\ev)=\ksdim\ev(R)$ and

    \[
    1=\kdim(\overline{R})=\dim_{\overline{R}/\overline{\m}}\left(\overline{\m}/\overline{\m}^2\right)=\dim_{R/\m}\left((\m/\m^2)\ev\right).
    \]

    \noindent It remains to prove that $\ksdim\od(R)=\dim_{\Bbbk}((\m/\m^2)\od)$. Let $s=\dim_{\Bbbk}((\m/\m^2)\od)$. Then, $\ksdim\od(R)\leq s$. We may assume that $s\geq 1$, because if $s=0$, then $\ksdim\od(R)=0$ so $R\od=0$ and  there is nothing to prove. Now, let $\{\overline{z}_1, \ldots, \overline{z}_s\}\subseteq(\m/\m^2)\od$ be a basis for the $\Bbbk$-vector space $(\m/\m^2)\od$ such that $z^{[s]}\neq0$. We have two cases to consider, namely:

    \noindent \underline{Case 1:} $\kdim(R\ev/\ann_{R\ev}(z^{[s]}))=\kdim(R\ev)$.
    
    In this case, $z_1, \ldots, z_s$ is a system of odd parameters and it follows that $\ksdim\od(R)=s=\dim_{R/\m}((\m/\m^2)\od)$. Hence, $R$ is regular. 
    \\

    \noindent \underline{Case 2:}  $\kdim(R\ev/\ann_{R\ev}(z^{[s]}))<\kdim(R\ev)=1$.

    In this case, $\kdim(R\ev/\ann_{R\ev}(z^{[s]}))=0$, so the only prime ideal in $R\ev/\ann_{R\ev}(z^{[s]})$ is the maximal one, namely $\m\ev/\ann_{R\ev}(z^{[s]})$. Hence, the quotient should be Artinian and we obtain $(\m\ev/\ann_{R\ev}(z^{[s]}))^n=0$, for some positive integer $n$. Thus,
 
\begin{align*}
    0&=(\m\ev/\ann_{R\ev}(z^{[s]}))^n 
     =\left(\m\ev^n+\ann_{R\ev}(z^{[s]})\right)/\ann_{R\ev}(z^{[s]}) 
     \simeq\m\ev^n/\left(\m\ev^n\cap \ann_{R\ev}(z^{[s]})\right).
\end{align*}

\noindent It follows that 
\begin{align*}
    \m\ev^n \subseteq\m\ev^n\cap \ann_{R\ev}(z^{[s]}) 
     \subseteq\ann_{R\ev}(z^{[s]}).
\end{align*}

The above inclusion implies that any element of $\m\ev$ to the power of $n$ is in $\ann_{R\ev}(z^{[s]})$. Consider some (non-zerodivisor) $m\in\m\ev-R\od^2$. Then, we obtain that $m^n\in\ann_{R\ev}(z^{[s]})$, i.e., $m^n z^{[s]}=0$. Since $m$ is not a zerodivisor, $m^{n-1}z^{[s]}=0$ and, through a recursive reasoning, it is not hard to see that $z^{[s]}=0$, a contradiction. This contradiction shows that  $\kdim(R\ev/\ann_{R\ev}(z^{[s]}))=\kdim(R\ev)=1,$  which means that $R$ is regular.  
\end{proof}

\begin{remark} With the assumptions of  Theorem \ref{Theorem:5.9}, i) or/and ii) imply  

\begin{itemize}
         \item[iii)] \begin{itemize}
             \item[\rm a)] $\overline{R}$ is a discrete valuation ring.
             \item[\rm b)]   For some/any minimal system $z_1, \ldots, z_s$ of generators of the $R\ev$-module $R\od$, $z^{[s]}\neq 0$. 
         \end{itemize}
     \end{itemize}  

 It would be interesting to know if iii) $\Rightarrow$ i). Furthermore, does Theorem \ref{Theorem:5.9} hold without the assumption of $\m$ being regular? 
\end{remark}

\begin{proposition}\label{Corollaty:to:Theorem:5.9}
   Let $R$ be a local Noetherian superdomain which is not a superfield, with regular maximal ideal  such that:

    \begin{itemize} 
        \item[\rm i)] $R$ is integrally closed. 
        \item[\rm ii)] Every prime superideal of $R$, other than $\J_R$, is maximal. 
        \item[\rm iii)] For some/any basis $\overline{z}_1, \ldots, \overline{z}_s$ of the $\Bbbk$-vector space $V=(\m/\m^2)\od$, we have $z^{[s]}\neq 0$.
    \end{itemize}

    \noindent Then, $R$ is regular with $\ksdim(R)=1\mid N$, where $N$ is a non-negative integer. That is, $R$ is a discrete valuation superring.
\end{proposition}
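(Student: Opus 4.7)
The strategy is to reduce the proposition to Theorem~\ref{Theorem:5.9}: since hypothesis (iii) is exactly condition (b) of that theorem, it suffices to show that $\overline{R}$ is a discrete valuation ring. Once this is established, Theorem~\ref{Theorem:5.9} yields directly that $R$ is a discrete valuation superring, i.e., regular with $\ksdim(R)=1\mid N$ for $N=\dim_\K((\m/\m^2)\od)$.

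To produce a uniformizer of $\overline{R}$, I first invoke Proposition~\ref{lem:Dedinv}. Its hypotheses are met by (i), (ii), and the fact that $R$ is a Noetherian superdomain which is not a superfield; applied to the regular prime $\m$, it gives that $\m$ is invertible. Because $R$ is local, Proposition~\ref{LEMA:3.11} then forces $\m=xR$ for some $x\in K(R)^*$. Squares of odd elements vanish, so units of $K(R)$ must be even; thus $x\in R\ev$ and $x$ is a non-zerodivisor in $R$.

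Next I would verify that $\overline{x}\neq 0$ in $\overline{R}$. Every product $ab$ of two odd elements satisfies $(ab)^2=-a^2b^2=0$, so $R\od^2$ sits inside the nilradical of $R\ev$; since $x$ is a non-zerodivisor, it cannot lie in $R\od^2=\J_R\cap R\ev$, whence $\overline{x}\neq 0$. The inclusion $\J_R\subseteq\m=xR$ then gives $\overline{\m}=\overline{x}\,\overline{R}$. Moreover, every prime of $R$ has the form $\p\ev\oplus R\od$ and hence contains $R\od^2$ and $R\od$, i.e., contains $\J_R$, so the primes of $\overline{R}$ are in bijection with those of $R$; by (ii) combined with $R$ not being a superfield, every nonzero prime of $\overline{R}$ is maximal and $\overline{R}$ is not a field, so $\kdim(\overline{R})=1$. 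A Noetherian local domain of Krull dimension one with nonzero principal maximal ideal is a DVR (via Krull's intersection theorem), completing the verification.

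The main obstacle is the nilpotency step: checking $\overline{x}\neq 0$ requires $R\od^2$ to lie in the nilradical of $R\ev$, which rests on supercommutativity together with the standing hypothesis that $2$ is not a zerodivisor. Once $\overline{x}\neq 0$ is established, the remainder is a routine combination of the classical DVR criterion with Theorem~\ref{Theorem:5.9}.
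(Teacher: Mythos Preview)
Your argument is correct. Both you and the paper begin by invoking Proposition~\ref{lem:Dedinv} to make $\m$ invertible, and both conclude by appealing to Theorem~\ref{Theorem:5.9}; the difference lies in how you pass from invertibility of $\m$ to $\overline{R}$ being a DVR. The paper transfers invertibility to $\overline{\m}$ via the localization isomorphisms $\overline{\m}_{\overline{\p}}\simeq\overline{\m_\p}\simeq\overline{R_\p}\simeq\overline{R}_{\overline{\p}}$, and then runs a Noetherian ``maximal counterexample plus Nakayama'' argument to show every nonzero ideal of $\overline{R}$ is a power of $\overline{\m}$. You instead apply Proposition~\ref{LEMA:3.11} directly in $R$ to obtain $\m=xR$ with $x$ an even non-zerodivisor, reduce modulo $\J_R$ to get $\overline{\m}=\overline{x}\,\overline{R}$ with $\overline{x}\neq 0$, and cite the classical criterion that a one-dimensional Noetherian local domain with principal maximal ideal is a DVR. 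Your route is shorter, avoids the power-of-$\overline{\m}$ argument entirely, and in fact dovetails with Remark~\ref{remark:5.12}: the very equation $\m=xR$ you produce is exactly what that remark exploits to deduce the striking consequence $R\od=0$. The paper's route, on the other hand, keeps the argument for $\overline{R}$ self-contained in the purely even world and does not rely on the homogeneity of the generator coming out of the proof of Proposition~\ref{LEMA:3.11}.
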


\begin{proof} Suppose that $R$ is a local superring, with maximal ideal $\m$.  Since $R$ is Noetherian, $\overline{R}$ is Noetherian as well (see, e.g., \cite[Corollary 3.3.4]{westrathesis}). Clearly, $\overline{R}$ is also a local domain whose maximal ideal is $\overline{\m}=\m/\J_R$. We claim that $\overline{\m}$ is invertible. To see this, note that for any prime ideal $\overline{\p}$ of $\overline{R}$,  $\overline{\m}_{\overline{\p}}\simeq\overline{\m_\p}\simeq\overline{R_\p}\simeq\overline{R}_{\overline{\p}},$ where we have used  \cite[Proposition 5.1.4]{westrathesis} and Proposition \ref{lem:Dedinv}. Now, we prove that $\overline{R}$ is discrete valuation ring. It suffices to show that any nonzero ideal of $\overline{R}$ is a power of $\overline{\m}$ (see  \cite[Proposition 9.7]{atiyah}). Suppose, for a contradiction, that the assertion is not true and consider the collection $\mathcal{S}$ of nonzero ideals of $\overline{R}$ that are not a power of $\overline{\m}$, ordered by inclusion. By our hypothesis, this collection is nonempty and, since $\overline{R}$ is Noetherian, $\mathcal{S}$ has a maximal element, say $\a$. Clearly, $\overline{\m}\not\in\mathcal{S}$ and thus $\a\subseteq\overline{\m}$, because $\a\neq\overline{\m}$.

Assume, for a contradiction, that $\overline{\m}^{-1}\a=\overline{R}$. Then, there are finitely many $a_i\in \overline{\m}^{-1}$ and $x_i\in \a$, with $i=1, \ldots, n$, such that $1=a_1x_1+\cdots+a_nx_n$. If for all $i=1, \ldots, n$, $a_ix_i\in\overline{\m}$, then $1\in\overline{\m}$, a contradiction. Thus, for some $j$, $a_jx_j\not\in\overline{\m}$. Let us fix a such $j$. Since $(\overline{R}, \overline{\m})$ is local, $a_jx_j$ must be a unit. Further, any $m\in\overline{\m}$ may be written in the form  $m=[(ma_j)(a_jx_j)^{-1}]x_j.$ That is, $\overline{\m}=x_j\overline{R}\subseteq\a$ (since $x_j\in\a$), i.e., $\overline{\m}=\a$, contradicting that $\a$ is properly contained in $\overline{\m}$. Therefore,  $\overline{\m}^{-1}\a\subsetneq\overline{R}=\overline{\m}^{-1}\overline{\m}.$  Thus, $ \a\subseteq \overline{\m}^{-1}\a\subsetneq\overline{\m}^{-1}\overline{\m}=\overline{R},$   since $\overline{\m}$ is invertible. Thus, $\overline{\m}^{-1}\a$ is a proper ideal of $\overline{R}$. If $\overline{\m}^{-1}\a=\a$, then $\a=\overline{\m}\a$, so $\a=0$ by (the usual) Nakayama's Lemma. Hence, $\overline{\m}^{-1}\a\supsetneq\a$. Therefore, the maximality of $\a$ implies that $\overline{\m}^{-1}\a$ is a power of $\overline{\m}$, say $\overline{\m}^{-1}\a=\overline{\m}\,^{r}$. We conclude that $\a=\overline{\m}\,^{r+1}$, i.e., $\a$ is a power of $\overline{\m}$, a contradiction. Thus, $\overline{R}$ is a DVR. The subsequent demonstration proceeds analogously to that of Theorem \ref{Theorem:5.9}.
\end{proof}

\begin{remark}\label{remark:5.12}\textit{ }
 
\begin{enumerate}[i)]
    \item  While Proposition \ref{Corollaty:to:Theorem:5.9} is to be expected, in view of the usual theory of discrete valuation rings and the results we have obtained throughout this paper, it has surprising and unexpected consequences in the realm of superrings. In fact, let $R$ be a local superring as in Proposition \ref{Corollaty:to:Theorem:5.9}. Thus, $\m$, the maximal ideal of $R$, is invertible by Proposition \ref{lem:Dedinv}. This implies that $\m=x R, \text{ for some } x\in\m-\J_R$, by Proposition \ref{LEMA:3.11}. In other words, $\m$ has one generator and hence the total dimension of $R$ is one. Since $R$ is regular, the total dimension of $R$ coincides with the number $\ksdim\ev(R)+\ksdim\od(R). $ Since, $\ksdim\ev(R)=1$, then $\ksdim\od(R)=0$. That is, $\J_R$ has no generators as a superideal of $R$, so $\J_R=0$ and $R\od=0$. Hence, $R=R\ev=\overline{R}$.
    \item By i), if $R$ is a discrete valuation  superdomain with nontrivial odd part and regular maximal ideal, then it cannot be integrally closed and the maximal ideal cannot be invertible (and hence not projective by Proposition \ref{Prop:4.23})! Further, an integrally closed  superdomain with Krull superdimension $1\mid N$ and regular maximal ideal cannot be regular! 

\item  In classical commutative algebra, the integral closure of a Dedekind domain within a finite separable field extension remains a Dedekind domain. However, our investigation demonstrates that no such stable formulation exists in the supercommutative setting; the structural constraints of $\mathbb{Z}_2$-graded extensions prevent a direct analogue of this property.

\item For classical Dedekind domains, the notions of Principal Ideal Domain and Unique Factorization Domain are equivalent. In \cite{RTT5}, we established that any superdomain possessing a non-trivial odd part and satisfying unique factorization must necessarily have an even Krull superdimension zero. Consequently, the traditional equivalence between principal ideal domains and unique factorization domains fails to generalize.

\item Our analysis reveals that, excluding the trivial purely even case, 
a discrete valuation superring cannot arise from a discrete valuation on its associated superfield of fractions. While any superring derived from such a valuation is inherently integrally closed, we establish in the present work that non-trivial Dedekind superrings are never integrally closed. This highlights a sharp distinction between the $\mathbb Z_2$-graded and non-graded valuation theories. For a systematic treatment of valuation theory in the supercommutative context, we refer the reader to \cite{RTT3}.

\end{enumerate}
   
\end{remark} 

\begin{proposition}\label{cor:sdvr} Let $R$ be a Noetherian superdomain. The following conditions are equivalent. 

\begin{itemize}
    \item[\rm i)] $R$ is regular with Krull superdimension $1\mid N$. 
    \item[\rm ii)] $R_\p$ is a discrete valuation superring for all prime ideal $\p$ of $R$.
\end{itemize}

\end{proposition}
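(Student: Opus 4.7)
The plan is to prove both directions via the local characterizations of regularity (Definition \ref{def:regular:superring}) and of the Krull superdimension, leveraging the fact that both the superdomain property and Noetherianity pass to localizations.

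For i) $\Rightarrow$ ii): I would fix a prime ideal $\p$ of $R$ and check the defining properties of a discrete valuation superring one by one. Noetherianity of $R_\p$ is standard; the superdomain property follows from the identification $\overline{R_\p}\simeq\overline{R}_{\overline{\p}}$, a localization of the domain $\overline{R}$; regularity is immediate from Definition \ref{def:regular:superring}. To control the Krull superdimension I would use that $\ksdim\ev(R_\p)$ equals the height of $\p\ev$ in $R\ev$, and invoke Proposition \ref{characte:regular} (together with the hypothesis $\ksdim\ev(R)=1$) to conclude that $\overline{R}$ is a one-dimensional regular domain; hence whenever $\p$ is maximal in $R$, $\overline{\p}$ is a height-one maximal ideal of $\overline{R}$, giving $\ksdim\ev(R_\p)=1$. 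The odd superdimension of $R_\p$ is bounded above by $N$, so $R_\p$ has Krull superdimension $1\mid N'$ with $N'\leq N$, as required.

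For ii) $\Rightarrow$ i): Regularity of $R$ is immediate from Definition \ref{def:regular:superring}, since each $R_\m$ is a DVS, hence regular. For the even Krull superdimension, I would compute $\ksdim\ev(R)=\kdim(R\ev)=\kdim(\overline{R})$ (using that every element of $R\od^2$ is nilpotent in $R\ev$), and apply Theorem \ref{Theorem:5.9} at each maximal ideal $\m$ to deduce that $\overline{R}_{\overline{\m}}$ is a DVR; therefore $\overline{R}$ is a one-dimensional regular domain and $\ksdim\ev(R)=1$. For the odd part, I would invoke Proposition \ref{P:2.5} to extract a finite generating set $z_1,\ldots,z_d$ of $R\od$ over $R\ev$, and then use Proposition \ref{Masu-Zub-prop:4.1} to obtain $\ksdim\od(R)\leq d$, so that $\ksdim(R)=1\mid N$ for some finite $N\geq 0$.

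The most delicate point, to my mind, is interpretive rather than computational: the clause ``for every prime ideal $\p$'' cannot be read in its strictest sense, since $R_{\J_R}=Q(R)$ is a superfield with even Krull superdimension $0$, not $1$, and so fails the letter of the DVS definition. I would therefore read ii) as asserting the DVS property at every maximal ideal of $R$ (equivalently, at every nonzero prime of $\overline{R}$), which is exactly what is needed both to invoke Definition \ref{def:regular:superring} for regularity and to recover the global Krull superdimension $1\mid N$ via the local-to-global passage through $\overline{R}$.
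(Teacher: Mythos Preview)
Your approach is essentially the same as the paper's: both directions hinge on Definition \ref{def:regular:superring} for regularity and on the identification $\overline{R_\p}\simeq\overline{R}_{\overline{\p}}$ to transfer dimension computations to the Dedekind domain $\overline{R}$. The paper's proof of i)~$\Rightarrow$~ii) simply writes the chain $\ksdim\ev(R_\p)=\kdim((R\ev)_{\p\ev})=\kdim(\overline{R}_{\overline{\p}})=1$, and for ii)~$\Rightarrow$~i) observes that each $\overline{R}_{\overline{\p}}$ is a DVR so $\overline{R}$ is Dedekind of dimension $1$; you do the same, only more explicitly (checking the superdomain condition, bounding the odd superdimension via Propositions \ref{P:2.5} and \ref{Masu-Zub-prop:4.1}, and invoking Theorem \ref{Theorem:5.9} to extract the DVR property of the reduction).

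Your interpretive caveat about $\p=\J_R$ is well-taken and in fact pinpoints a step the paper glosses over: the final ``$=1$'' in the paper's displayed chain is not valid at $\p=\J_R$, since $\overline{R}_{(0)}$ is a field of Krull dimension $0$. The intended reading is exactly the one you propose---the statement should be understood for maximal (equivalently, non-minimal) primes, which is what both the regularity criterion in Definition \ref{def:regular:superring} and the Dedekind condition on $\overline{R}$ actually require.
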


\begin{proof}  i) $\Rightarrow$ ii) If $R$ is regular, then for any prime ideal $\p$ of $R$, the local superring $R_\p$ is local by Definition \ref{def:regular:superring}. Now, we will compute $\ksdim\ev(R_\p)$. Recall that if $A$ is a commutative ring, then $\kdim(A/\nil(A))=\kdim(A)$ and since $R$ is a superdomain, then $\nil(R\ev)=R\od^2$. Thus, we find that 

    \begin{align*}
        \ksdim\ev(R_\p) =\kdim((R_\p)\ev) 
         =\kdim((R\ev)_{\p\ev}) 
         =\kdim((R\ev)_{\p\ev}/\nil((R\ev)_{\p\ev})) 
         =\kdim(\overline{R}_{\overline{\p}})
         =1.
    \end{align*} 

    \noindent Thus, $R_\p$ is a discrete valuation superring for all prime $\p$ and ii) holds. 
    
 ii) $\Rightarrow$ i) If every $R_\p$ is regular, then $R$ is regular by Definition \ref{def:regular:superring}. On the other hand, note that for all prime $\overline{\p}$ of $\overline{R}$, the ring $\overline{R_\p}=\overline{R}_{\overline{\p}}$ is a DVR. Thus, $\overline{R}$ is a Dedekind ring and $\ksdim\ev(R)=\kdim(R\ev)=\kdim(\overline{R})=1$. Hence, i) holds and the proof is complete. 
\end{proof}

\begin{definition}
    A superdomain $R$ is called a \textit{Dedekind superring} if it satisfies one (and hence all) of the equivalent conditions of Proposition \ref{cor:sdvr}. 
\end{definition}

\begin{example}
    Let $\mathbb{C}$ be the field of complex numbers. Then, for every non negative integer $N$, the superring  $R=\mathbb{C}[\,x\mid\theta_1, \,\theta_2, \,\ldots, \,\theta_N]$ is a Dedekind superring with Krull superdimension $1\mid N$. In particular, consider $N=1$, i.e., $R=\mathbb{C}[\,x\mid\theta\,]$. Let $\m=(x-\alpha\mid\theta)$ be a maximal ideal of $R$. Note that $\overline{R}=\mathbb{C}[x]=R\ev$ and $\m\ev=(x-\alpha)_{\overline{R}}$. Let $\p\neq\m$ be any prime ideal of $R$. Then, $\m_\p\simeq R_\p$ as $R_\p$-supermodules. However, if $\p=\m$, the isomorphism is of $(R\ev)_{\p\ev}$-supermodules, but not as $R_\p$-supermodules. Hence, $\m$ is not invertible and $R$ is not integrally closed (by Proposition \ref{lem:Dedinv}). \qed
\end{example}

\begin{remark}
    Two key properties of  Dedekind rings (integrally closed and invertibility of ideals) have just been revealed not to be superizable. As we will see, the list extends even longer. For instance, the prime factorization of ideals fails to hold when $R\od\neq0$. Let $R$ be a discrete valuation superring with maximal ideal $\m$. Suppose that every superideal of $R$ is a power of $\m$. If $\m^2=\m$, we use the Super Nakayama's Lemma to find that $\m=0$. Thus, $\m^2\neq\m$. Let us consider a homogeneous element $x\in\m-\m^2$. Then, $(x)_R=\m^s$ for some $s\geq0$. Since $x\not\in\m^2$, the only possibility is that $s=1$ and thus $\m$ is principal. We have already shown above that this implies that $R$ has trivial superring structure (see Remark \ref{remark:5.12}).   However, we do have primary factorization for superideals, as consequence of a more general result. Namely, using the results on primary ideals in \cite[\S\S 4.2-4.3]{westrathesis} and following the usual treatment for the commutative counterpart of this topic (e.g., \cite[Proposition 9.1]{atiyah}), one can prove that if $R$ is a Noetherian superdomain with Krull superdimension $1\mid N$, then every non-zero superideal of $R$ can be uniquely expressed as the product of primary superideals whose radicals are all distinct. The details of this assertion can be found in \cite[Chapter 4]{ThesisJT}. 
\end{remark}


\subsection{Dedekind superschemes}\label{SUP:SCH}

For the necessary definitions of locally ringed superspaces and superschemes, we refer to \cite{MASUOKA2020106245} or \cite{westrathesis}.

\begin{definition}
    A \textit{Dedekind superscheme} $ \mathfrak{X}=(X, \O_X)$  is a superscheme admitting a finite affine covering $$X=\bigcup_{i\in I}\spec(R_i),$$ such that each $R_i$ is a Dedekind superring. 
\end{definition}

\begin{example} Let $\mathfrak{X}=(X, \O_X)$ be a  superscheme of finite type over a field $\K$. Suppose $\mathfrak{X}$ is nonsingular and has superdimension $1\mid N$. Thus, for any open affine $U\subseteq X$, the superring $\O_X(U)$ is a Dedekind superring and for any prime $\p$, the stalk $\O_{X,\p}$ is a discrete valuation superring.   
\end{example}

\begin{example}
    If $\mathfrak{X}$ is a Dedekind superscheme, it follows that it is Noetherian, integral, nonsingular, and has superdimension $1\mid N$. Every Dedekind scheme can be naturally interpreted as a Dedekind superscheme, where the underlying Dedekind superrings simply have trivial odd parts. Furthermore, the spectrum of a Dedekind superring is itself a Dedekind superscheme. To illustrate this last point, consider an algebraically closed field $\K$. We can then construct the superpolynomial ring  $R=\K[\,x\mid\theta_1,\, \ldots, \,\theta_N]\,$  which becomes a Dedekind superdomain with Krull superdimension $1\mid N$. Its spectrum,  $\spec(R)=\mathbb{A}^{1|N}$, forms an affine Dedekind superscheme. Using standard gluing techniques, we establish that projective lines $\mathbb{P}^{1|N}_\K$ also belong to the class of Dedekind superschemes.
\end{example}

Recall that the topological space $Y$ of any Dedekind scheme possesses at least two distinct points. One is the generic point $\eta$ (such that $\overline{\{\eta\}}=Y$) and the other points are all closed. Now let $\X$ be a Dedekind superscheme and  $\X_{\mathrm{ev}}$ is bosonic reduction (which is a Dedekind scheme). Because $\X$ and its bosonic reduction $\X_{\mathrm{ev}}$ share the same underlying topological space, the topological space of $\X$ has one generic point and one or more closed points. By the same argument, we can demonstrate the following proposition.

\begin{proposition}\label{Prop:5.9}
    Let $\X=(X, \O_X)$ be a Dedekind superscheme. Then, every non-empty open subsets of $X$ has the form $X-\{x_1, \ldots, x_k\}$, for finitely many closed points $x_1, \ldots, x_k\in X$. In particular, the set $\{\eta\}$, consisting of the generic point $\eta$, is open if and only if $X$ is finite.  
\end{proposition}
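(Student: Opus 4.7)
The plan is to reduce to the purely even, one-dimensional situation by exploiting that $\X$ and its bosonic reduction $\X_{\mathrm{ev}}$ share the same underlying topological space. First I would fix a finite affine open cover $X=\bigcup_{i=1}^n \spec(R_i)$ with each $R_i$ a Dedekind superring. By Remark \ref{rmk:prime-ideal} (v), the prime ideals of $R_i$ correspond bijectively (and homeomorphically, under the Zariski topology) to those of $(R_i)\ev$; since $R_i$ is a superdomain, $\nil((R_i)\ev)=(R_i)\od^2$, so $\spec(R_i)$ is homeomorphic to $\spec(\overline{R_i})$. By Proposition \ref{cor:sdvr} (and the computation in its proof that $\ksdim\ev(R_i)=\kdim(\overline{R_i})=1$), the superreduced $\overline{R_i}$ is a one-dimensional Noetherian integral domain whose nonzero primes are all maximal. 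Consequently each $\spec(R_i)$ has a unique generic point $\eta_i$ (the image of $\J_{R_i}$) and every other point is closed.

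Next I would take a nonempty open $U\subseteq X$ and set $Z:=X\setminus U$. Because $X$ is integral with unique generic point $\eta$, and $\eta$ belongs to every nonempty open, $\eta\notin Z$; hence $Z\cap \spec(R_i)$ is a proper closed subset of $\spec(R_i)$ for each $i$. Proper closed subsets of $\spec(\overline{R_i})$ correspond to nonzero radical ideals of the Dedekind domain $\overline{R_i}$, which in turn are finite intersections of maximal ideals. Thus each $Z\cap \spec(R_i)$ is a finite set of closed points of $\spec(R_i)$, and these points remain closed in $X$. Since the cover is finite, $Z$ itself is a finite set $\{x_1,\ldots,x_k\}$ of closed points of $X$, yielding $U=X\setminus\{x_1,\ldots,x_k\}$ as claimed.

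For the second assertion, if $\{\eta\}$ is open then the first part immediately forces $\{\eta\}=X\setminus\{x_1,\ldots,x_k\}$ for finitely many closed points $x_j$, so $X=\{\eta,x_1,\ldots,x_k\}$ is finite. Conversely, if $X$ is finite then $X\setminus\{\eta\}$ is a finite union of closed points, hence closed, so $\{\eta\}$ is open. The main obstacle I anticipate is the clean identification of the Zariski topology on $\spec(R_i)$ with that on $\spec(\overline{R_i})$ together with the verification that $\overline{R_i}$ is Dedekind; once that topological reduction is in place, the remainder of the proof is essentially the standard structural fact about the underlying space of a Dedekind scheme, as foreshadowed by the authors in the paragraph preceding the statement.
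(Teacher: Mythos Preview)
Your proposal is correct and follows the same approach the paper takes: the paper does not give a formal proof but, in the paragraph immediately preceding the statement, observes that $\X$ and its bosonic reduction $\X_{\mathrm{ev}}$ share the same underlying topological space and that $\X_{\mathrm{ev}}$ is an ordinary Dedekind scheme, whence the result follows from the classical structure of such spaces. Your argument is simply a more explicit unpacking of this reduction---working affine-locally, identifying $\spec(R_i)$ with $\spec(\overline{R_i})$, and invoking that $\overline{R_i}$ is a one-dimensional Noetherian domain (indeed Dedekind, by the proof of Proposition~\ref{cor:sdvr})---so there is no substantive difference in strategy.
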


The following proposition characterizes the structure sheaf of a Dedekind superscheme. 

\begin{proposition} Let $\X=(X, \O_X)$ be a Dedekind superscheme with generic point $\eta$. Then, the stalk $\O_{X, \eta}$ is a superfield. Furthermore, for any non-empty open affine subset of $X$, $Q(\O_X(U))\simeq\O_{X, \eta}$. Moreover, if $x\in X$ is any closed point, $\O_{X,  x}$ is a discrete valuation superring whose superfield of fractions is $\O_{X, \eta}$. 
\end{proposition}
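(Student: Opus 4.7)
The plan is to reduce everything to the affine case using the definition of a Dedekind superscheme and then invoke the Dedekind superring results, in particular Proposition \ref{cor:sdvr}, together with the localization identities in Proposition \ref{P:2.2:A}. The key observation throughout is that the stalks and structure sheaves of $\X$ are computed affine-locally via the machinery developed in Sections \ref{Sec:Basic:Def} and \ref{Sec:Dedekind}.

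First, I would fix a finite affine cover $X = \bigcup_i \spec(R_i)$ with each $R_i$ a Dedekind superring, and identify the generic point in each chart. By Remark \ref{rmk:prime-ideal} v), $\spec(R_i)$ is homeomorphic to $\spec((R_i)\ev)$; since $R_i$ is a superdomain, $(R_i)\od^2 = \nil((R_i)\ev)$, and hence $\spec((R_i)\ev) \cong \spec(\overline{R_i})$. Because $\overline{R_i}$ is a Dedekind domain, it has a unique generic point $(0)$, whose preimage in $\spec(R_i)$ is precisely $\J_{R_i}$. These local generic points glue to the unique generic point $\eta \in X$ (consistent with Proposition \ref{Prop:5.9}), which lies in every non-empty open set of $X$.

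Next, for any non-empty affine open $U = \spec(R) \subseteq X$, since $\eta \in U$ corresponds to $\J_R$, the standard stalk computation yields $\O_{X, \eta} = R_{\J_R}$, which equals $Q(R)$ by the very definition of the superfield of fractions of a superdomain. In particular, $\O_{X, \eta}$ is a superfield, and both of the first two assertions follow simultaneously. For a closed point $x \in X$, I would pick an affine neighborhood $U = \spec(R)$ in which $x$ corresponds to a maximal ideal $\m$; the stalk $\O_{X, x} = R_\m$ is then a discrete valuation superring by Proposition \ref{cor:sdvr}, since $R$ is Dedekind. To conclude that its superfield of fractions is $\O_{X, \eta}$, I would compute $Q(R_\m) = (R_\m)_{\J_{R_\m}}$, observe that $\J_{R_\m} = \J_R \cdot R_\m$, and invoke Proposition \ref{P:2.2:A} ii) (applicable since $\J_R \subseteq \m$, so $\J_R \cap (R\ev - \m\ev) = \emptyset$) to obtain $(R_\m)_{\J_R R_\m} \simeq R_{\J_R} = \O_{X, \eta}$.

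The main obstacle is essentially a bookkeeping one: verifying that the generic points glue coherently across affine charts and that the two successive localizations (first at $\m$, then at the canonical superideal of $R_\m$) commute correctly with the single localization of $R$ at $\J_R$. No genuinely deep step is expected, since all the substantive content is packaged in Proposition \ref{cor:sdvr}, Proposition \ref{P:2.2:A}, Remark \ref{rmk:prime-ideal}, and the definition of $Q(R)$ for a superdomain.
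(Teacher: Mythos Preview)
Your proposal is correct and follows essentially the same route as the paper: identify the generic point with the canonical superideal in each affine chart, compute the stalk at $\eta$ as the superfield of fractions, and for a closed point use Proposition~\ref{P:2.2:A}~ii) to collapse the double localization $(R_\m)_{\J_{R_\m}}$ to $R_{\J_R}$. The only cosmetic difference is that the paper outsources the first two claims (superfield nature of $\O_{X,\eta}$ and $Q(\O_X(U))\simeq\O_{X,\eta}$) to \cite[Lemma~2.3]{MASUOKA2020106245}, whereas you spell out the identification $\eta\leftrightarrow\J_R$ directly via Remark~\ref{rmk:prime-ideal}~v) and the homeomorphism $\spec(R)\cong\spec(\overline{R})$; this makes your argument slightly more self-contained but is otherwise the same computation.
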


\begin{proof}
    By \cite[Lemma 2.3]{MASUOKA2020106245},  $\O_{X, \eta}$ is a superfield and $Q(\O_{X}(U))\simeq\O_{X, \eta}$. Now,  let $x\in X$ be a closed point. We then consider a non-empty affine open neighbourhood $U$ of $x$. Then, $\O_X(U)$ is a Dedekind superring. In addition, the closed point $x\in X$ corresponds to a  maximal ideal $\m_x\in\O_{X}(U)$, while the generic point necessarily corresponds to the canonical superideal $\J_{\O_X(U)}$. Consequently, we have $\O_{X}(U)_{\m_x}\simeq\O_{X, x}$ and $Q(\O_{X, x})\simeq \O_{X, \eta}$. \end{proof}

\begin{proposition}\label{P:6.8}
    Any open subsuperscheme of an affine Dedekind superscheme is affine. 
\end{proposition}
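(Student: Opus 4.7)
The plan is to imitate the classical argument that every open subscheme of an affine Dedekind scheme is affine. Let $R$ be a Dedekind superring and $\mathfrak{U}$ an open subsuperscheme of $\X = \spec(R)$. By Proposition \ref{Prop:5.9}, either $U = X$, or $U = \{\eta\}$ (which occurs only when $X$ is finite), or $U = X \setminus \{x_1, \ldots, x_k\}$ with each $x_i$ a closed point corresponding to a maximal superideal $\m_i$ of $R$. The first case is trivial, and in the second $\mathfrak{U} \simeq \spec(\O_{X, \eta}) = \spec(Q(R))$ is affine since $\O_{X, \eta}$ is a superfield. Hence I concentrate on the main case.

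In the main case, set $A := \Gamma(\mathfrak{U}, \O_X)$ and consider the canonical morphism $\psi: \mathfrak{U} \to \spec(A)$ arising from the adjunction between global sections and $\spec$; the goal is to show $\psi$ is an isomorphism. My strategy is: first, verify that $R$ (being a Dedekind superring) is a strong superdomain, so each localization $R \to R_\p$ is injective and realizes $R_\p$ as a sub-superring of $K(R)$; then realize $A = \bigcap_{\p \in U} R_\p$ inside $K(R)$. Second, show $A$ is a Dedekind superring: it is a superdomain by construction, its primes correspond bijectively to the primes $\p \in U$ of $R$ under contraction along $R \hookrightarrow A$, and the induced local rings $A_{\p A} \simeq R_\p$ are discrete valuation superrings, so $A$ meets the hypotheses of Proposition \ref{cor:sdvr} once Noetherianity is established. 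Third, match $\psi$ with this identification: it is a bijection on primes, and the stalk maps coincide with the isomorphisms $A_{\p A} \simeq R_\p \simeq \O_{X, \p}$; a standard sheaf-level argument then shows $\psi$ is an isomorphism of locally ringed superspaces, so $\mathfrak{U} \simeq \spec(A)$ is affine.

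The main obstacle is verifying that $A = \bigcap_{\p \in U} R_\p$ is Noetherian: $A$ is not obviously a simple localization $S^{-1}R$, but rather a general \emph{overring} of $R$ inside $K(R)$. In the classical setting this is the content of the theorem that any overring of a Dedekind domain in its fraction field is Dedekind, and its proof leans on invertibility of fractional ideals and unique prime factorization of ideals, both of which fail in the super setting (see Remark \ref{remark:5.12}). A natural workaround is to pass to the bosonic reduction $\overline{R}$, which is a classical Dedekind domain: the classical theorem gives that $\overline{A} = \Gamma(\overline{\mathfrak{U}}, \O_{\spec(\overline{R})})$ is Dedekind, and one then lifts this Dedekind structure up to $A$ using that $R\od$ is a finitely generated $R\ev$-module (Proposition \ref{P:2.5}) and the characterization of regularity in Proposition \ref{characte:regular}. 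Carrying out this lift carefully is the technical heart of the argument.
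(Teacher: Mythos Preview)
The paper's proof is a two-line reduction that bypasses everything you set up. It invokes a structural theorem of Zubkov (\cite[Theorem 3.1]{ZubkovNoetherian}): a Noetherian superscheme is affine if and only if its bosonic reduction is affine. Given this, one observes that $\mathfrak{X}_{\mathrm{ev}}$ is a classical affine Dedekind scheme, that $\mathfrak{Y}_{\mathrm{ev}}$ is an open subscheme of it and hence affine by the standard commutative result (\cite[Lemma 15.17]{gortz}), and that $\mathfrak{Y}$ is Noetherian; Zubkov's criterion then finishes the argument. No explicit description of $\Gamma(\mathfrak{U},\O_X)$, no overring analysis, and no Noetherianity lift are needed.

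Your instinct to pass to the bosonic reduction is exactly the right one, but you apply it at the ring level (trying to lift the Dedekind property from $\overline{A}$ back to $A$) rather than at the scheme level, where Zubkov's theorem does the work for free. As written, your outline has a genuine gap at the step you yourself flag: the Noetherianity of $A=\bigcap_{\p\in U}R_\p$. Your proposed workaround presupposes the identification $\overline{A}\simeq\Gamma(\overline{\mathfrak{U}},\O_{\spec(\overline{R})})$, i.e., that global sections commute with bosonic reduction on $\mathfrak{U}$; this is not automatic and typically requires a vanishing-of-$H^1$ input on $\mathfrak{U}$, which is essentially what Zubkov's theorem already encodes. Several of the other steps (that a Dedekind superring is a strong superdomain, that primes of $A$ biject with primes in $U$, that $A_{\p A}\simeq R_\p$) are plausible but also unverified in the paper's framework. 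So your route may be completable, but it is substantially longer and its hardest step is circular without an external affineness or cohomological input.
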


\begin{proof}
    Let $\mathfrak{X}$ be an affine Dedekind superscheme and consider $\mathfrak{Y}$ any open subsuperscheme of $\mathfrak{X}$. Then, $\mathfrak{X}_{\mathrm{ev}}$ is a Dedekind scheme and further $\mathfrak{Y}_{\mathrm{ev}}$ is an open subscheme of $\mathfrak{X}_{\mathrm{ev}}$. Thus, $\mathfrak{Y}_{\mathrm{ev}}$ is affine (see \cite[Lemma 15.17]{gortz}). Therefore, since $\mathfrak{Y}$ is clearly Noetherian, it is affine by \cite[Theorem 3.1]{ZubkovNoetherian}.
\end{proof}

\begin{proposition}
    Let $\mathfrak{X}$ be an affine Dedekind superscheme. Then, for any open subscheme of $\mathfrak{X}$, say $\mathfrak{Y}$, we have 
    $$H^i(\mathfrak{Y}, \mathcal{F})=0\quad\text{  for all $i\geq 1$}$$ and all quasicoherent $\mathcal{F}$ of $\mathfrak{Y}$-supermodules.
\end{proposition}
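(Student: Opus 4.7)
The plan is to combine the two immediately preceding propositions. The argument is essentially a two-step reduction, using no new ideas beyond what is already established in the excerpt.

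First, I would invoke Proposition \ref{P:6.8} to upgrade $\mathfrak{Y}$ from being merely an open supersubscheme to being an \emph{affine} superscheme. This is the crucial geometric input: on an affine Dedekind superscheme, opens are still affine. Here the Dedekind hypothesis on $\mathfrak{X}$ is what makes this step work, via the chain of reasoning in Proposition \ref{P:6.8} (pass to the bosonic reduction, use that opens of affine Dedekind \emph{schemes} are affine, then lift back by Zubkov's affineness criterion).

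Next, I would verify that $\mathfrak{Y}$ is Noetherian. Since every superscheme in this subsection is assumed Noetherian and of finite type over the ground field $\K$, and $\mathfrak{Y}$ is open in $\mathfrak{X}$, Noetherianity of $\mathfrak{Y}$ is automatic (an affine open of a Noetherian superscheme is the spectrum of a localization of a Noetherian superring, hence Noetherian). Having $\mathfrak{Y}$ affine and Noetherian, I then apply Proposition \ref{Zubkov:3.2:Noeth}: affineness implies $H^1(\mathfrak{Y}, \mathcal{F}) = 0$ for every quasicoherent $\mathcal{O}_\mathfrak{Y}$-supermodule $\mathcal{F}$, and the ``moreover'' clause of the same proposition then gives the vanishing $H^i(\mathfrak{Y}, \mathcal{F}) = 0$ for all $i \geq 1$, which is exactly the claim.

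The proof is therefore almost immediate from the cited results, and there is no real obstacle — the content has been front-loaded into Propositions \ref{P:6.8} and \ref{Zubkov:3.2:Noeth}. The only thing one should be careful about is that Proposition \ref{P:6.8} is stated for open \emph{supersubschemes}, while the statement to prove writes ``open subscheme''; this is just terminology, and the two notions agree under the implicit conventions of Definition \ref{Def:superscheme}. Thus the proof can be written in just two or three sentences: $\mathfrak{Y}$ is affine by Proposition \ref{P:6.8}, it is Noetherian as an open in a Noetherian superscheme, and hence $H^i(\mathfrak{Y},\mathcal{F})=0$ for all $i\geq 1$ by Proposition \ref{Zubkov:3.2:Noeth}.
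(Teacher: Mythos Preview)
Your proposal is correct and follows exactly the paper's approach: invoke Proposition~\ref{P:6.8} to conclude that $\mathfrak{Y}$ is affine, then apply Proposition~\ref{Zubkov:3.2:Noeth} to obtain the vanishing of $H^i(\mathfrak{Y},\mathcal{F})$ for all $i\geq 1$. The paper's proof is in fact just these two sentences; your additional remark about Noetherianity of $\mathfrak{Y}$ is a harmless clarification of a hypothesis the paper leaves implicit.
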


\begin{proof}
    Any such $\mathfrak{Y}$ is affine by Proposition \ref{P:6.8}. Then, the result follows from \cite[Theorem 3.2]{ZubkovNoetherian}.
\end{proof}

We are now equipped with the requisite tools to tackle the following two key problems. 
\medskip

\noindent \textbf{Problem 1.} Develop a theory of Dedekind superschemes.
\medskip

In the classical commutative setting, every coherent sheaf of ideals $\mathcal{I}$ on a Dedekind scheme $X$ is invertible. This property is established stalk-wise: for each point $p \in X$, the $\mathcal{O}_{X,p}$-module $\mathcal{I}_p$ is free of rank one. While this freeness is trivial at the generic point, it follows at closed points from the fact that $\mathcal{O}_{X,p}$ is a discrete valuation ring, and consequently a Principal Ideal Domain. As previously noted, these fundamental algebraic characterizations fail for discrete valuation superrings. Consequently, the natural super-analogue of the aforementioned invertibility does not hold in general. This divergence suggests that the theory of Dedekind superschemes will exhibit significant structural discrepancies compared to the classical case. In particular, this setting necessitates a dedicated development of a theory of Weil divisors specifically adapted to the super-geometric context.

\medskip

\noindent \textbf{Problem 2.} Let $\mathfrak{S}$ be a Dedekind superscheme. We call an integral, projective, flat $\mathfrak{S}$-superscheme $\pi: \mathfrak{X} \to \mathfrak{S}$ of dimension $2\mid N$ a \textit{fibered supersurface} over $\mathfrak{S}$. Develop a theory of fiber supersurfaces. 

\section{Final Comments}\label{comments}


We conclude by outlining several directions for future research.  

Our investigation of valuations on superrings revealed striking discrepancies between the commutative and supercommutative settings. In particular, we proved that every valuation superring is integrally closed, which detaches this notion from one-dimensional regularity. Consequently, the classical correspondence between valuation rings and Dedekind domains no longer holds in the super context. These differences, together with a general framework for Manis valuations on superrings, are analyzed in detail in our paper \cite{RTT3}.  

Another open direction concerns the classification of finitely generated supermodules over Dedekind superrings. The absence of prime factorization for superideals prevents a direct extension of the classical Dedekind-domain methods, such as representing ideals as $\a=(a,b)_R$.  

Finally, for a Noetherian superring $R$, let $\spic_-(R)$ denote the set of isomorphism classes of locally free $R$-supermodules of rank $0\mid1$. We expect $\spic_-(R)$ to carry a group structure analogous to $\spic_+(R)$, motivating the study of the \textit{total Picard group} $\spic(R)=\spic_+(R)\amalg\spic_-(R)$ and the characterization of $\ker(\pi_*)$ in this setting.


\section*{Acknowledgements}

The authors are especially grateful to Alexandr Zubkov (Department of Mathematical Sciences, UAEU, Al-Ain, United Arab Emirates; Sobolev Institute of Mathematics, Russia) for the very useful comments, suggestions and several enriching discussions about several points in this work.

\subsection*{Funding}
The first and second authors were partially supported by CODI (Universidad de Antioquia, UdeA) by project numbers 2020-33713 and 2022-52654.


\end{document}